\chardef\bslash=`\\ % p. 424, TeXbook
\newtheorem{theorem}{Theorem}[section]
\newtheorem{corollary}[theorem]{Corollary}
\newtheorem{lemma}[theorem]{Lemma}
\newtheorem{proposition}[theorem]{Proposition}
\newtheorem{prop}[theorem]{Proposition}
\theoremstyle{remark}
\newtheorem{remark}[theorem]{Remark}
\newtheorem{remarks}[theorem]{Remarks}
\newtheorem{example}[theorem]{Example}
\theoremstyle{definition}
\newtheorem{convention}[theorem]{Convention}
\numberwithin{equation}{section}
\newcommand{\thmref}[1]{Theorem~\ref{#1}}
\newcommand{\secref}[1]{Section~\ref{#1}}
\newcommand{\proref}[1]{Proposition~\ref{#1}}
\newcommand{\lemref}[1]{Lemma~\ref{#1}}
\newcommand{\corref}[1]{Corollary~\ref{#1}}
\newcommand{\remref}[1]{Remark~\ref{#1}}
\newcommand{\clsp}{\overline{\operatorname{span}}}
\newcommand{\lsp}{\operatorname{span}}
\newcommand{\Aut}{\operatorname{Aut}}
\newcommand{\hatz}{\widehat\Z}
\newcommand{\cq}{\mathcal{C}_{\Q}}
\newcommand{\nx}{\mathbb N^{\times}}
\newcommand{\inv}{^{-1}}
\def\sn{\mathcal N}
\newcommand{\nxnx}{{\mathbb N \rtimes \mathbb N^\times}}
\newcommand{\qxqx}{{\mathbb Q \rtimes \mathbb Q^*_+}}
\newcommand{\N}{\mathbb N}
\newcommand{\qn}{\mathcal Q_\mathbb N}
\newcommand{\Z}{\mathbb Z}
\newcommand{\Q}{\mathbb Q} 
\newcommand{\qx}{\mathbb Q^*_+}
\newcommand{\C}{\mathbb C}
\newcommand{\R}{\mathbb R}
\newcommand{\T}{\mathbb T}
\newcommand{\TT}{\mathcal T}
\newcommand{\HH}{\mathcal H}
\newcommand{\OO}{\mathcal O}
\newcommand{\primes}{\mathcal P}
\newcommand{\bcheck}{\mathcal C_{\Q}}
\def\lcm{\operatorname{lcm}}
\def\her{\operatorname{Her}}
\def\tr{\operatorname{Tr}}
\newcommand{\notdiv}{\nmid}
\def\oldz{r}
\def\newz{z}
\def\setb{E}
\title[Phase transition on a Toeplitz algebra]{Phase transition on the Toeplitz algebra of\\
the affine semigroup over the natural numbers}
\date{21 July 2009}
\thanks{This research was supported by the Natural Sciences and Engineering Research
Council of Canada and by the Australian Research Council.}
\author[Marcelo~Laca]{Marcelo Laca}
\address{Department of Mathematics and Statistics, University of
Victoria, Victoria, BC V8W 3P4, Canada}
\email{laca@math.uvic.ca}
\author[Iain~Raeburn]{Iain Raeburn}
\address{School of Mathematics and Applied Statistics,
University of Wollongong, NSW  2522, Australia}
\email{raeburn@uow.edu.au}
\begin{document}
\begin{abstract} We show that the group $\qxqx$ of orientation-preserving affine transformations of the
rational numbers is quasi-lattice ordered by its subsemigroup $\nxnx$.
 The associated Toeplitz $C^*$-algebra $\TT(\nxnx)$ is universal for
isometric representations which are covariant in the sense of Nica.
We give a presentation of $\TT(\nxnx)$ in terms of generators and relations, and use this to show that the $C^*$-algebra $\qn$ recently introduced by
Cuntz is the boundary quotient of $(\qxqx, \nxnx)$ in the sense of
Crisp and Laca. The Toeplitz algebra $\TT(\nxnx)$ carries a natural dynamics $\sigma$, which induces the one considered by Cuntz on the quotient
$\qn$, and our main result is
 the computation of  the KMS$_\beta$ (equilibrium) states of the
dynamical system $(\TT(\nxnx), \R,\sigma)$
 for all values of the inverse temperature $\beta$.
  For $\beta \in [1, 2]$ there is a unique
 KMS$_\beta$ state, and the KMS$_1$ state factors through the quotient map onto $\qn$, giving the unique KMS state discovered by Cuntz. At
$\beta =2$ there is a phase transition, and for $\beta>2$ the KMS$_\beta$ states are indexed
by probability measures on
 the circle. There is a further phase transition at $\beta=\infty$, where the KMS$_\infty$ states are indexed by the probability measures on the
circle, but the ground states are indexed by the states on the classical Toeplitz algebra~$\TT(\N)$.
 \end{abstract}
%%%%%%%%%%%%%%%%%%%%%%%  
\maketitle
%%%%%%%%%%%%%%%%%%%%%%%%%
\section*{Introduction}
%%%%%%%%%%%%%%%%%%%%%%%%%%
Cuntz has recently introduced and studied a $C^*$-algebra $\qn$ which is generated by an isometric representation of the semidirect product $\nxnx$ of the additive semigroup $\N$ by the natural action of the multiplicative semigroup $\N^\times$ \cite{cun2}. Cuntz proved that $\qn$ is simple and purely infinite, and that it admits a natural dynamics for which there is a unique KMS state, which occurs at inverse temperature $1$. He also showed that $\qn$ is closely related to other very interesting $C^*$-algebras, such as the Bunce-Deddens algebras and the Hecke $C^*$-algebra of Bost and Connes~\cite{bos-con}.

In recent years there has been a great deal of interest in other $C^*$-algebras generated by isometric representations of a semigroup $P$, such as the Toeplitz algebra $\TT(P)$ which is generated by the canonical isometric representation on $\ell^2(P)$, and it is natural to ask how Cuntz's algebra relates to these other $C^*$-algebras. It is obviously not the same as $\TT(\nxnx)$: in $\qn$, the isometry associated to the additive generator is unitary. So one is led to guess that $\qn$ might be a quotient of $\TT(\nxnx)$. If so, the Toeplitz algebra $\TT(\nxnx)$ looks very interesting indeed. There is a general principle, going back at least as far as  \cite{eva} and used to effect in \cite{EL,lacaN},  which suggests that the Toeplitz algebra should have a much richer KMS structure than its simple quotient. (The principle is illustrated by the gauge action on the Cuntz algebra $\OO_n$, where the Toeplitz-Cuntz analogue $\TT\OO_n$ has KMS states at every inverse temperature $\beta\geq \log n$, but only the one with $\beta=\log n$ factors through a state of $\OO_n$.)

Our goal here is to answer these questions. We first prove that the pair consisting of the semigroup $\nxnx$ and its enveloping group $\qxqx$ form a quasi-lattice ordered group in the sense of Nica \cite{nica}; this is itself a little surprising, since it is not one of the semi-direct product quasi-lattice orders discussed in \cite{nica}. However, once we have established that $(\qxqx,\nxnx)$ is quasi-lattice ordered, it follows
that the Toeplitz algebra $\TT(\nxnx)$ is universal for Nica-covariant isometric representations of $\nxnx$. We can then run this pair through the general theory of \cite{CL2}, and with some effort we can recognise $\qn$ as the boundary quotient of $\TT(\nxnx)$. From this we deduce that $\qn$ is a partial crossed product, and thereby provide another proof that it is purely infinite and simple. 

We then consider a natural dynamics $\sigma$ on $\TT(\nxnx)$ which induces that
studied by Cuntz on $\qn$, and compute the simplices of KMS${}_\beta$
states for every inverse temperature $\beta$. For
$\beta>2$ the KMS${}_\beta$ states are parametrised by probability
measures on the unit circle. For $\beta \in [1,2]$, only the one
corresponding to Lebesgue measure on the circle survives, and there is
a unique KMS${}_\beta$ state. This phase transition is associated to
the pole of the
partition function, which is the shifted Riemann zeta function $\zeta
(\beta-1)$.

Our system $(\TT(\nxnx),\R,\sigma)$  therefore exhibits some of the behaviour of other number-theoretic systems \cite{bos-con,diri, HL, CM,LvF, cmgl2}, even though  our system lacks some of the features which make
the other number-theoretic systems so interesting, such as the
presence of a large symmetry group like
the idele class group of $\Q$ in \cite{bos-con} or
its two-dimensional analogue in \cite{CM}.
However, the KMS states in our system also display
several interesting phenomena which have not previously occurred for
dynamical systems of number-theoretic origin.
First, not all KMS states factor through the expectation onto the
commutative subalgebra spanned by the range projections of the
isometries: for $\beta>2$, the KMS$_\beta$ states do not necessarily
vanish on the additive generator, which for this system is fixed by
the dynamics. Second, the unique KMS${}_\beta$ states for $1\leq \beta
\leq 2$ have a circular symmetry,  which is broken at $\beta=2$, but
this symmetry does not come from an action of the circle on the $C^*$-
algebra $\TT(\nxnx)$. This phenomenon appears to be related to the
fact that the enveloping group $\qxqx$ is nonabelian, and the dual
coaction of $\qxqx$ on $\TT(\nxnx)$ cannot be ``restricted'' to a
coaction of the additive subgroup $\Q$. And third, as foreshadowed in
\cite[Definition 3.7]{CM2}, the set of
KMS$_\infty$ states (the states that are limits of KMS$_\beta$ states
as $\beta \to \infty$),
which is isomorphic to the simplex of probability measures on the
circle, is much smaller than the set of ground states, which is
isomorphic to the state space of the classical Toeplitz algebra, and
hence is not a simplex.

We begin our paper with a brief discussion of notation and preliminaries from quasi-lattice ordered groups and number theory. Then in \secref{semigpisql}, we show that the semigroup $\nxnx$ induces a quasi-lattice order on the group $\qxqx$,
and deduce that the associated Toeplitz $C^*$-algebra is generated by a universal Nica-covariant isometric  representation (\corref{Toepl=univ}). 
In \secref{euclideanalgorithm} we work out a version of the euclidean algorithm suitable for computations involving 
Nica-covariant isometric representations of $\nxnx$. Once this is done, we characterise in \secref{secpresentation} the Toeplitz
 $C^*$-algebra $\TT(\nxnx) $ of $\nxnx$ by giving a presentation in terms of generators and relations (\thmref{toeplitzpresentation});
 some of the relations are recognizably variants on Cuntz's relations for $\qn$, but others are new.  
 
 To apply the structure theory of Toeplitz $C^*$-algebras of quasi-lattice orders,  we need to understand the Nica spectrum of $\nxnx$, and in \secref{nicaspectrum} we give an explicit parametrisation of this spectrum using  integral adeles and supernatural numbers. This allows us to identify the \emph{boundary} of the spectrum, as defined in \cite{CL2}. The boundary in \cite{CL2} is the smallest of many boundaries: there are many ways one can ``go to infinity'' in the semigroup $\nxnx$. Of particular interest is the {\em additive boundary}, which corresponds to going to infinity along the additive semigroup $\N$. In \proref{prodstructure} we show that the additive boundary  has a direct product decomposition, which later plays a crucial role in our construction and analysis of KMS$_\beta$ states. In \secref{sectionqn}, we prove that Cuntz's $\qn$ is isomorphic to the \emph{boundary quotient} studied in \cite{CL2} (Theorem~\ref{qnisboundaryquotient}), and we use the theory developed in \cite{CL2} to give a quick proof that $\qn$ is simple and purely infinite.

 In \secref{secKMS} we introduce a natural dynamics $\sigma$ on $\TT(\nxnx)$,  
  and state our main result, which
describes the phase transition associated to this natural dynamics (\thmref{maintheorem}). 
We also discuss the significance of this phase transition in relation to the symmetries and the structure of the $C^*$-algebra $\TT(\nxnx)$.
We begin the proof of the main theorem in \secref{seccharacterisationKMSground}. We first show that there are no KMS states for $\beta <1$,  and then we characterise the KMS$_\beta$ states by their behaviour on a spanning set for $\TT(\nxnx)$. This characterisation implies that a KMS$_\beta$ state is determined by its behaviour on the $C^*$-subalgebra generated by the additive generator (\lemref{KMScharacterisationlemma}). In \lemref{lemmagroundcharacterisation}, we give a similar characterisation of ground states.

In \secref{secconstructionKMSground}, we construct KMS$_\beta$ states for $\beta \in [1,\infty]$ by inducing a probability measure 
on the additive boundary to a state of $\TT(\nxnx)$ via the conditional expectation of the dual coaction of  $\qxqx$ (\proref{productmeasure}). 
In  \proref{constructKMS>2}, we consider $\beta \in (2,\infty]$, and give a spatial construction of a KMS$_\beta$ state for each probability measure on the circle.
A parallel construction also yields a ground state for each state of $\TT(\N)$.
We complete the proof of \thmref{maintheorem} in \secref{secsurjectivity}, by showing  that the explicit 
constructions of  \secref{secconstructionKMSground}
correspond exactly to the possibilities left open in \secref{seccharacterisationKMSground}. 
The interesting  case here is  $\beta\in [1,2]$,
and our key idea is the reconstruction formula of 
\lemref{phirestrictedtoQB}, which was inspired by Neshveyev's ergodicity proof of the uniqueness of KMS$_\beta$ states on the Hecke $C^*$-algebra of Bost and Connes \cite{nes}. Curiously, though, we can now see that the analogous reconstruction formula for the Bost-Connes system does not need ergodicity at all. We give this ``ergodicity-free" version of Neshveyev's proof in Appendix~ \ref{bcuniqueness}.

%%%%%%%%%%%%%%%%%%%%%%%%%%%%%%%%%
\section{Notation and preliminaries} \label{notationandpreliminaries}
%%%%%%%%%%%%%%%%%%%%%%%%%%%%%%%%%

\subsection{Toeplitz algebras}\label{Toeplitzalgs} Every cancellative semigroup $P$ has a natural \emph{Toeplitz representation} $T:P\to B(\ell^2(P))$, which is characterised in terms of the usual basis $\{e_x:x\in P\}$ by $T_ye_x=e_{yx}$. Notice that the operators $T_y$ are all isometries. The \emph{Toeplitz algebra} $\TT(P)$ is the $C^*$-subalgebra of $B(\ell^2(P))$ generated by the operators $\{T_y\}$. Our semigroups $P$ will always be generating subsemigroups of a group $G$; as a motivating example, consider the subgroup $\N^2$ of $\Z^2$. Any isometric representation $V$ of $\N^2$ on Hilbert space is determined by the two commuting isometries $V_{(1,0)}$ and $V_{(0,1)}$. In the Toeplitz representation of $\N^2$, however, the two generators satisfy the extra relation $T_{(1,0)}^*T_{(0,1)}=T_{(0,1)}T_{(1,0)}^*$, and it turns out that this relation uniquely characterises the Toeplitz algebra $\TT(\N^2)$ among $C^*$-algebras generated by non-unitary isometric representations of $\N^2$. Nica's theory of quasi-lattice ordered groups $(G,P)$ provides a far-reaching generalisation of this result.

A submonoid $P$ of a group $G$ naturally induces a left-invariant partial order on  by $x\leq y$ iff $x\inv y \in P$.
Following Nica \cite{nica}, we say that $(G,P)$ is {\em quasi-lattice ordered}
if every pair of elements $x$ and $y$ in $G$ which have a common upper bound in $G$ have a least upper bound $x\vee y $. When they have a common upper bound we write $x\vee y < \infty$, and otherwise $x\vee y=\infty$. (This is not quite Nica's original definition, but it is equivalent. This and other reformulations are discussed in \cite[\S3]{CL1}.) An isometric representation $V:P\to B(\HH)$ is \emph{Nica covariant} if 
\[
V_xV_x^*V_yV_y^*=\begin{cases}V_{x\vee y}V_{x\vee y}^*&\text{if $x\vee y<\infty$}\\
0&\text{if  $x\vee y=\infty$,}\end{cases}
\]
or equivalently, if
\begin{equation}\label{genNicacov}
V_x^*V_y=\begin{cases}V_{x^{-1}(x\vee y)}V_{y^{-1}(x\vee y)}^*&\text{if $x\vee y<\infty$}\\
0&\text{if  $x\vee y=\infty$.}\end{cases}
\end{equation}
Nica showed that there is a $C^*$-algebra $C^*(G,P)$ which is generated by a universal Nica-covariant repesentation $w:P\to C^*(G,P)$, and we then have $C^*(G,P)=\clsp\{w_xw_y^*:x,y\in P\}$. Nica identified an amenability condition which implies that the representation $\pi_T:C^*(G,P)\to\TT(P)$ is faithful (see \cite{nica} or \cite[Theorem~3.7]{quasilat}). This amenability hypothesis is automatically satisfied when the group $G$ is amenable \cite[\S4.5]{nica}. Since the enveloping group $\qxqx$ of our semigroup $\nxnx$ is amenable, we can use Nica's theorem to view our Toeplitz algebra $\TT(\nxnx)$ as the $C^*$-algebra generated by a universal Nica-covariant representation $w:\nxnx\to \TT(\nxnx)$. (Here we use the lower case $w$ to denote the Toeplitz representation $T$ to emphasise that it has a universal property;  the obvious letter $t$ is not available because it will later denote time.) 

Nica studied $\TT(P)$ by exploiting what he called its ``crossed product-like structure,'' which the present authors subsequently recognised as that of a semigroup crossed product $B_P\rtimes P$ \cite{quasilat}. The underlying algebra $B_P$ is the closed subset of $\ell^\infty(P)$ spanned by the characteristic functions $1_x $ of the sets  $x P=\{y\in P:y\geq x\}$, which is a $C^*$-subalgebra because 
$1_x 1_y = 1_{ x \vee y}$. Nica showed that the homomorphisms from $B_P$ to $\C$ are given by the nonempty hereditary subsets $\omega$ of $P$ which are directed by the partial order on $G$: the corresponding homomorphism is defined by\[
\hat\omega(f):=\lim_{x\in \omega}f(x),
\]
which makes sense because $\omega$ is directed and $f$ is the uniform limit of functions which are eventually constant. Notice that $\hat\omega$ is characterised by the formula
\[
\hat \omega(1_x)=\begin{cases}1&\text{if $x\in \omega$}\\
0&\text{if $x\notin \omega$.}
\end{cases}
\]
The collection $\Omega$ of nonempty directed hereditary sets $\omega$, viewed as a subset of the compact space $\{0,1\}^P$, is now called the \emph{Nica spectrum} of $P$. 

An important tool in our analysis will be the realisation of $C^*(G,P)$ as a partial crossed product $C(\Omega_{\sn})\rtimes_\alpha (\qxqx)$ obtained in \cite[\S6]{topfree}. The space $\Omega_{\sn}$ used in \cite{topfree} looks slightly different: its elements are the hereditary subsets of $G$ which contain the identity $e$ of $G$. However, it was observed in \cite[Remark~6.5]{topfree} that $\omega\mapsto \omega\cap P$ is a homeomorphism of $\Omega_{\sn}$ onto the Nica spectrum  $\Omega$, so we can apply the general theories of \cite{topfree} and \cite{CL2} in our situation. For $x\in P$, the partial map $\theta_x$ has domain all of $\Omega$, and $\theta_x(\omega)$ is the hereditary closure $\her(x\omega)$ of $x\omega:=\{xy:y\in \omega\}$; since the domain of $\theta_g$ is empty unless $g\in PP^{-1}$, this completely determines $\theta$. The action $\alpha$ is defined by $\alpha_g(f)(\omega)=f(\theta_{g^{-1}}(\omega))$ when this makes sense, and $\alpha_g(f)(\omega)=0$ otherwise.

\subsection{Number theory} We write $\N$ for the semigroup of natural numbers (including $0$) under addition and $\nx$ for the semigroup of positive integers under multiplication. We also write $\Q^*$ for the group of non-zero rational numbers under multiplication, and $\Q^*_+$ for the subgroup of positive rational numbers. 

We denote the set of prime numbers by $\primes$, and we write $e_p(a)$ for the exponent of $p$ in the prime factorization of $a\in \nx$, so that $a=\prod_{p\in\primes}p^{e_p(a)}$; then $a\mapsto \{e_p(a)\}$ is an isomorphism of $\nx$ onto the direct sum $\bigoplus_{p\in\primes}\N$. We also use the \emph{supernatural numbers}, which are formal products $N=\prod_{p\in\primes}p^{e_p(N)}$ in which the exponents $e_p(N)$ belong to $\N\cup\{\infty\}$; thus the set $\sn$ of supernatural numbers is the direct product $\sn= \prod_{p \in\primes} p^{\N\cup \{\infty\}}$. For $M,N\in\sn$, we say that $M$ divides $N$ (written $M|N$) if $e_p(M)\leq e_p(N)$ for all $p$, and then any pair $M,N$ has a greatest common divisor $\gcd(M,N)$ in $\sn$ and a lowest common multiple $\lcm(M,N)$.

For $N\in \sn$, the \emph{$N$-adic integers} are the elements of the ring
\[
\Z/N := \varprojlim \big((\Z/a\Z):a\in\nx,\;a|N\big).
\] 
When $N$ is a positive integer, $\Z/N$ is the ring $\Z/N\Z$, and for each prime $p$, $\Z/p^\infty$ is the usual ring $\Z_p$ of $p$-adic integers. When $N=\nabla:=\prod_p  p^\infty$ is the largest supernatural number,  $\Z/\nabla$ is the ring $\hatz$ of integral ad{\`e}les. If $M$ and $N$ are supernatural numbers and $M|N$, then there is a canonical topological-ring homomorphism of $\Z/N$ onto $\Z/M$, and we write $r(M)$ for the image of $r\in \Z/N$ in~$\Z/M$.

It is standard practice to freely identify $\hatz$ with the product $\prod_p \Z_p$, and the next proposition gives a similar product decomposition for $\Z/N$. The main ingredient in the proof is the Chinese remainder theorem. 

\begin{proposition}\label{prodvsinvlim}
Let $N =\prod_{p} p^{e_p(N)} $ be a supernatural number. Then the map $r\mapsto \{r(p^{e_p(N)})\}_{p\in \primes}$ is a (topological ring) isomorphism
 of $\Z/N$ onto $\prod_{p\in\primes}\Z/p^{e_p(N)}$.
\end{proposition}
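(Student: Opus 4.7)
The plan is to realize both $\Z/N$ and $\prod_p \Z/p^{e_p(N)}$ as inverse limits of the same cofinal diagram of finite rings, using the Chinese remainder theorem at each finite stage. Define
\[
\varphi\colon \Z/N \to \prod_{p \in \primes} \Z/p^{e_p(N)}, \qquad \varphi(r) = \bigl(r(p^{e_p(N)})\bigr)_{p \in \primes}.
\]
Each coordinate $r \mapsto r(p^{e_p(N)})$ is a continuous ring homomorphism, realized as the limit of the structure maps $r \mapsto r(p^k)$ for finite $k \le e_p(N)$, so $\varphi$ is itself a continuous ring homomorphism into the product. Both source and target are compact Hausdorff, so it will suffice to prove that $\varphi$ is a bijection.

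For injectivity, suppose $\varphi(r) = 0$. Given any $a \in \nx$ with $a \mid N$, write $a = \prod_{p \mid a} p^{e_p(a)}$ as a finite product. Since $e_p(a) \le e_p(N)$, the vanishing of $r(p^{e_p(N)})$ forces $r(p^{e_p(a)}) = 0$ for every prime $p \mid a$. The Chinese remainder theorem provides an isomorphism $\Z/a\Z \xrightarrow{\sim} \prod_{p \mid a} \Z/p^{e_p(a)}\Z$, under which $r(a)$ corresponds to the tuple $(r(p^{e_p(a)}))_{p \mid a} = 0$. Hence $r(a) = 0$ for every finite divisor $a$ of $N$, and $r = 0$.

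For surjectivity, take $s = (s_p) \in \prod_p \Z/p^{e_p(N)}$. For each $a \in \nx$ with $a \mid N$ and $a = \prod_{p \mid a} p^{e_p(a)}$, use the inverse of the CRT isomorphism to define $r_a \in \Z/a\Z$ as the element whose $p$-component in $\Z/p^{e_p(a)}\Z$ is the image of $s_p$ under the reduction $\Z/p^{e_p(N)} \to \Z/p^{e_p(a)}\Z$. Compatibility of the family $(r_a)_{a \mid N}$ under the structure maps $\Z/a\Z \to \Z/b\Z$ for $b \mid a \mid N$ follows from naturality of CRT together with the observation that reduction modulo successively smaller powers of each prime commutes with the CRT isomorphisms. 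Thus $(r_a)$ defines an element of $\Z/N$ mapping to $s$ under $\varphi$.

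No step here is genuinely hard: the substance of the proposition is that CRT is compatible with passage to the inverse limit. The only bookkeeping point to watch is the cofinality between the directed set of finite tuples of prime-power moduli $(k_p)_{p \in S}$ with $k_p \le e_p(N)$ (which controls basic open sets of the product) and the divisors $a \mid N$ in $\nx$ (which index the inverse system defining $\Z/N$); the assignment $(k_p)_{p \in S} \mapsto \prod_{p \in S} p^{k_p}$ witnesses this cofinality and makes the two inverse systems equivalent.
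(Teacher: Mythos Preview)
Your proof is correct and follows exactly the approach the paper indicates: the paper does not give a detailed argument, only remarking that ``the main ingredient in the proof is the Chinese remainder theorem.'' Your write-up supplies precisely this, applying CRT at each finite divisor $a\mid N$ and passing to the inverse limit, with the compactness argument handling continuity of the inverse.
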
 

Our arguments involve a fair bit of modular arithmetic, and we will often need to change base. So the next lemma will be useful.
\begin{lemma}\label{timesb}
Suppose that $a$ and $b$ are integers greater than $1$. Then the map $n\mapsto an$ induces a well-defined injection $\times a:\Z/b\Z\to \Z/ab\Z$; the image of this map is $\{m\in\Z/ab\Z:m\equiv 0\pmod a\}$, so we have a short exact sequence
\[
\xymatrix{
0\ar[r]&{\Z/b\Z}\ar[r]^{\times a} &\Z/ab\Z\ar[rr]^{\pmod a}&&\Z/a\Z\ar[r]&0.
}
\]
\end{lemma}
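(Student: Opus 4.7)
The plan is to verify the three assertions of the lemma in turn: well-definedness and injectivity of the map $\times a$, identification of its image, and exactness of the resulting sequence. All three are elementary consequences of divisibility in $\Z$, so the main obstacle is simply keeping the bookkeeping tidy; there is no conceptual difficulty.

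First I would take representatives $n, n' \in \Z$ with $n \equiv n' \pmod b$, write $n - n' = bk$ for some $k \in \Z$, and note $an - an' = abk$, so $an \equiv an' \pmod{ab}$; this shows $\times a$ is well defined. For injectivity, if $an \equiv an' \pmod{ab}$ then $ab \mid a(n-n')$, hence $b \mid (n-n')$ (cancelling $a$ in $\Z$), so $n \equiv n' \pmod b$.

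Next I would identify the image. Clearly every element of the form $an \pmod{ab}$ reduces to $0$ modulo $a$, so the image lies in $\{m \in \Z/ab\Z : m \equiv 0 \pmod a\}$. Conversely, given $m \in \Z/ab\Z$ with $m \equiv 0 \pmod a$, pick any integer representative and write it as $ak$ for some $k \in \Z$; then $k \pmod b$ is sent to $m$ by $\times a$.

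Finally I would assemble the short exact sequence. The reduction map $\Z/ab\Z \to \Z/a\Z$, $m \mapsto m \pmod a$, is the usual surjective ring homomorphism arising from $a \mid ab$, and its kernel is precisely $\{m \in \Z/ab\Z : m \equiv 0 \pmod a\}$, which by the previous step equals the image of $\times a$. Combined with injectivity of $\times a$ this yields exactness at all three nonzero terms, completing the proof.
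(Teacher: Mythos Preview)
Your proof is correct. The paper states this lemma without proof, treating it as an elementary fact about modular arithmetic; your argument supplies exactly the routine verification one would expect.
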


As a point of notation,  when $b$ is clear from the context, we write $an$ for the image of $n\in \Z/b\Z$ under the map $\times a$.

%%%%%%%%%%%%%%%%%%%%%%%%%
\section{A quasi-lattice order on $\qxqx$}\label{semigpisql}              
%%%%%%%%%%%%%%%%%%%%%%%%%

Let $\qxqx$ denote the semidirect product of the additive rationals by the 
multiplicative positive rationals, where the group operation and inverses are given by 
\begin{align*}
(r,x)(s,y)&= (r+xs, xy) \qquad\text{ for } r,s \in \Q \text{ and } x,y \in \qx,\ \text{ and }\\
(r,x)\inv&= (-x\inv r , x\inv) \qquad \text{ for } r \in \Q \text{ and } x \in \qx.
\end{align*}

\begin{proposition}
The group $\qxqx$ is generated by the elements $(1,1)$ and $\{(0,p): p\in \primes\}$ which satisfy  the relations
\begin{equation}\label{presentationqxqx}
(0,p) (1,1) = (1,1)^p (0,p) \quad \text{ and } \quad (0,p) (0,q) = (0,q) (0,p) \qquad \text{ for all } p,q\in \primes,
\end{equation}
and this is a presentation of $\qxqx$.
\end{proposition}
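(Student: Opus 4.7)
My plan is to split the verification into three parts: (i) the listed relations hold in $\qxqx$; (ii) the listed elements generate $\qxqx$; (iii) no other relations are needed. Parts (i) and (ii) are brief calculations in the semidirect product. For (i), direct computation gives $(0,p)(1,1) = (p,p) = (p,1)(0,p) = (1,1)^p(0,p)$ and $(0,p)(0,q) = (0,pq) = (0,q)(0,p)$. For (ii), I would factor a general element as $(r,x) = (r,1)(0,x)$. Via the isomorphism $\nx \cong \bigoplus_{p\in\primes}\N$, the element $(0,x) = (0,a)(0,b)^{-1}$ is a word in the $(0,p)^{\pm 1}$. For the additive part, the first relation yields the useful identity $(0,n)^{-1}(1,1)(0,n) = (1/n,1)$, from which $(m/n,1) = (0,n)^{-1}(1,1)^m(0,n)$.

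For (iii), the strategy is to let $\Gamma$ be the abstract group with presentation $\langle u,\ \{v_p\}_{p\in\primes} \mid v_p u = u^p v_p,\ v_p v_q = v_q v_p \rangle$, take the induced surjection $\phi:\Gamma \to \qxqx$ with $\phi(u) = (1,1)$, $\phi(v_p) = (0,p)$, and construct a splitting $\psi:\qxqx \to \Gamma$. For $n = \prod_p p^{e_p(n)} \in \nx$ set $v_n := \prod_p v_p^{e_p(n)}$, which is unambiguous by the commutativity of the $v_p$, and for each $r = m/n \in \Q$ define
\[
u_r := v_n^{-1} u^m v_n \in \Gamma.
\]
The critical technical point is showing that $u_r$ depends only on $r$, not on the chosen representative $m/n$. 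The key identity driving this is $v_k^{-1} u^k v_k = u$ for all $k \in \nx$, obtained by inverting $v_k u v_k^{-1} = u^k$ to get $u v_k^{-1} = v_k^{-1} u^k$. Combined with commutativity of the $v_p$, this delivers $u_{mk/(nk)} = u_{m/n}$.

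Once $u_r$ is well-defined I would check three things in $\Gamma$: that $r \mapsto u_r$ is a homomorphism from $(\Q,+)$, by reducing $u_r u_s$ to the common denominator $nq$ and noting $u_r u_s = u_{1/(nq)}^{mq+np} = u_{r+s}$; that $f:\qx \to \Gamma$ defined by $f(a/b) = v_a v_b^{-1}$ is a homomorphism (immediate from the commutativity of the $v_p$); and that the semidirect-product compatibility $f(x) u_s f(x)^{-1} = u_{xs}$ holds, which is a short conjugation computation using $v_a u v_a^{-1} = u^a$. These together show that $\psi(r,x) := u_r f(x)$ is a well-defined homomorphism $\qxqx \to \Gamma$, and evaluating on the generating symbols gives $\psi \circ \phi = \id_\Gamma$, so $\phi$ is injective, hence an isomorphism. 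The main obstacle is the well-definedness of $u_r$: the defining relations are asymmetric (pushing $v_p$ past $u$ multiplies the exponent $p$-fold, while pushing $v_p^{-1}$ through formally creates a $p$-th root), and one must verify that different factorisations of a single rational do collapse to the same element of $\Gamma$.
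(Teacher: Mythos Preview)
Your proposal is correct and follows essentially the same route as the paper. The paper likewise verifies the relations, then (to establish the universal property) takes an arbitrary group $G$ with elements $s,\{v_p\}$ satisfying the relations, extends $p\mapsto v_p$ to a homomorphism $\qx\to G$ using that $\qx$ is free abelian on $\primes$, defines $\phi_n:n^{-1}\Z\to G$ by $\phi_n(n^{-1}k)=v_n^{-1}s^kv_n$, notes that these glue to a homomorphism $\Q\to G$, and checks that the two pieces assemble into a homomorphism of the semidirect product. Your construction of $\psi$ is this same map in the special case $G=\Gamma$, and your proof spells out the well-definedness of $u_r$ (via $v_k^{-1}u^kv_k=u$) and the compatibility $f(x)u_sf(x)^{-1}=u_{xs}$ that the paper leaves to the reader.
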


\begin{proof}
It is easy to check that the elements $(1,1)$ and $(0,p)$ satisfy \eqref{presentationqxqx}. Suppose $G$ is a group containing elements $s$ and $\{v_p:p\in \primes\}$ satisfying the relations $v_ps=s^pv_p$ and $v_pv_q=v_qv_p$. Since $\qx$ is the free abelian group generated by $\primes$ and $v_p$ commutes with $v_q$, the map $p\mapsto v_p$ extends to a homomorphism $v:\qx\to G$. Since $\Z$ is free abelian, for each $n\in \nx$ there is a homomorphism $\phi_n:n^{-1}\Z\to G$ satisfying $\phi_n(n^{-1}k)=
v_n^{-1}s^k v_n $, and these combine to give a well-defined homomorphism $\phi:\Q=\bigcup_nn^{-1}\Z$ into $G$. Now the first relation extends to $v_rs^k=s^{rk}v_r$, and it follows easily that $v$ and $\phi$ combine to give a homomorphism of the semidirect product $\qxqx$ into $G$. 
\end{proof}

We shall consider the unital subsemigroup $\nxnx$ of 
$\qxqx$, which has the same presentation, interpreted in the category of monoids. Since $(\nxnx)\cap(\nxnx)^{-1}=\{(0,1)\}$, the subsemigroup $\nxnx$ induces a left-invariant partial order on $\qxqx$ as follows: for  $(r,x)$ and  $(s,y)$ in
$\qxqx$, 
\begin{align}
(r,x)  \leq (s,y) 	& \iff (r,x)\inv (s,y) \in \nxnx  \nonumber \\
			& \iff (-x\inv r, x\inv) (s,y) \in \nxnx  \nonumber \\
			& \iff (-x\inv r + x\inv s , x\inv y) \in \nxnx \nonumber \\
			& \iff x\inv(s-r) \in \N \text{ \ and \ } x^{-1}y \in\nx. \label{lequseful}
\end{align}
Our first goal is to show that this ordering has the quasi-lattice property used in \cite{nica} and \cite{quasilat}. 

\begin{proposition}\label{qlproperty}
The pair $(\qxqx, \nxnx)$ is a quasi-lattice ordered group.
\end{proposition}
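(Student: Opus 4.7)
The plan is to analyze common upper bounds of $(r,x), (s,y) \in \qxqx$ by separating coordinates via \eqref{lequseful}: $(a,c)$ is a common upper bound iff $c/x, c/y \in \nx$ and $(a-r)/x, (a-s)/y \in \N$. The two conditions decouple — the first constrains only $c$, the second only $a$ — so I will handle each separately and then combine.

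For the second coordinate, $(\qx, \nx)$ is lattice-ordered via the prime-exponent isomorphism $\qx \cong \bigoplus_{p \in \primes} \Z$ with positive cone $\nx \cong \bigoplus_p \N$. Thus the set of admissible $c$'s has a unique minimum $c_0 := x \vee y$, given prime by prime by $e_p(c_0) = \max(e_p(x), e_p(y))$, and every other admissible $c$ is a $\nx$-multiple of $c_0$. For the first coordinate, write $x = du$ and $y = dv$, with $d$ the meet of $x$ and $y$ in $(\qx, \nx)$, so that $u, v \in \nx$ are coprime and $u\Z + v\Z = \Z$. A Bezout calculation then shows that $(r + x\N) \cap (s + y\N)$ is nonempty iff $s - r \in d\Z$; when this fails, the pair has no common upper bound in $\qxqx$ and the quasi-lattice condition is vacuous there. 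When it holds, the set $A := \{a \in \Q : (a-r)/x,\,(a-s)/y \in \N\}$ is nonempty, and solving the Bezout equation and passing from $\Z$-solutions to $\N$-solutions shows $A = a_0 + c_0 \N$ for a unique smallest $a_0 \in A$.

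The candidate least upper bound is then $(a_0, c_0)$, and it is obviously a common upper bound by construction. To verify minimality, observe that any other upper bound $(a,c)$ satisfies $c_0 \mid c$ in $\nx$ (by the lattice property of $\qx$) and $a - a_0 \in c_0 \N$ (since $a \in A$), so one more application of \eqref{lequseful} gives $(a_0, c_0) \leq (a, c)$, identifying $(a_0, c_0)$ as the join $(r,x) \vee (s,y)$. I expect the main technical hurdle to be the Bezout bookkeeping for the first coordinate: confirming that $s - r \in d\Z$ is precisely the correct compatibility condition and tracking the shift from $\Z$-solutions to $\N$-solutions carefully enough to pin down both $a_0$ and the common step $c_0 = \lcm(x,y)$. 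Once that step is settled, the lattice structure of $(\qx, \nx)$ and a direct reading of \eqref{lequseful} yield the rest.
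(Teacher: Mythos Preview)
Your argument is correct, but it takes a genuinely different route from the paper's. The paper invokes \cite[Lemma~7]{CL1}, which reduces the quasi-lattice property to the one-element statement: every $(r,x)\in\qxqx$ that has \emph{some} upper bound in $\nxnx$ has a least one there. The paper then writes $x=a/b$ in lowest terms, takes $m=\min\big((r+x\N)\cap\N\big)$, and checks directly that $(m,a)$ is that least upper bound. Your approach instead verifies the two-element definition head-on: you decouple the second coordinate (handled by the lattice structure of $(\qx,\nx)\cong\bigoplus_p(\Z,\N)$) from the first (handled by a Bezout argument showing $A=a_0+c_0\N$), and then observe that the step of $A$ is exactly $c_0=x\vee y$, which is precisely what makes the separately-minimised pair $(a_0,c_0)$ dominate every other upper bound via \eqref{lequseful}. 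Your route is self-contained (no appeal to \cite{CL1}) and immediately produces the explicit formula for $(r,x)\vee(s,y)$ for arbitrary elements of $\qxqx$, which the paper only records afterwards as a remark for pairs in $\nxnx$; the paper's route is shorter because the cited lemma absorbs the work of reducing to a single element.
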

\begin{proof}
 By  \cite[Lemma 7]{CL1}, it suffices to show that if an element $(r,x)$ of $ \qxqx$ has an upper bound in $\nxnx$, then it has a least upper bound in $\nxnx$. Suppose $(k,c)\in \nxnx$ and $(r,x)\leq (k,c)$. Then from \eqref{lequseful} we have $k\in r+x\N$, so $(r+x\N)\cap \N$ is nonempty; let $m$ be the smallest element of $(r+x\N)\cap \N$. Write $x=a/b$ with $a,b\in\N$ and $a$, $b$ coprime. We claim that $(m,a)$ is a least upper bound for $(r,x)$ in $\nxnx$.
 
To see that $(r,x)\leq (m,a)$, note that $m\in r+x\N$ and $x^{-1}a=b\in\nx$, and apply \eqref{lequseful}. To see that $(m,a)$ is a \emph{least} upper bound, suppose that $(l,d)\in \nxnx$ satisfies $(r,x)\leq (l,d)$, so that by \eqref{lequseful} we have (i) $x^{-1}d\in \nx$ and (ii) $x^{-1}(l-r)\in\N$. Property (i) says that $a^{-1}bd=x^{-1}d$ belongs to $\nx$, which since $(a,b)=1$ implies that $a^{-1}d\in\nx$. Property (ii) implies that $l\in r+x\N$, so that $l\geq m:=\min((r+x\N)\cap \N)$. Property (ii) also implies that 
\[
a^{-1}b(l-m)=x^{-1}(l-m)\in x^{-1}((r+x\N)-(r+x\N))\subset \Z,
\]
which, since $(a,b)=1$, implies that $a^{-1}(l-m)\in \Z$. Since $l\geq m$, we have  $a^{-1}(l-m)\in \N$. Now we have $a^{-1}d\in \nx$ and $a^{-1}(l-m)\in\N$, which by \eqref{lequseful} say that $(m,a)\leq (l,d)$, as required.  
\end{proof}

\begin{remark}
Two elements $(m,a)$ and $(n,b)$ of $\nxnx$ have a common upper bound if and only if the set  $(m+a\N)\cap (n+b\N)$ is nonempty, and in fact it is easy to check that  \begin{equation} \label{Vcharacterisation}
(m,a) \vee (n,b) = \begin{cases} 
\infty  & \text{ if } (m+a\N)\cap (n+b\N) = \emptyset,       \\
 ( l  , \lcm(a,b)) & \text{ if } (m+a\N)\cap (n+b\N)\not = \emptyset,
\end{cases}
\end{equation}
where $l$ is the smallest element of $(m+a\N)\cap (n+b\N)$. In the next section we will see that $(m+a\N)\cap (n+b\N)\neq\emptyset$ if and only if $m-n$ is divisible by the greatest common divisor $\gcd(a,b)$, and provide an algorithm for computing $l$ when it exists. 
\end{remark}

As we remarked in the Introduction, we found Proposition~\ref{qlproperty} a little surprising, because the pair of semigroups $P=\nx$ and $Q=\N$ do not satisfy the hypotheses of \cite[Example~2.3.3]{nica}. Since its proof is really quite elementary, we stress that Proposition~\ref{qlproperty} has some important consequences. In particular, since the group $\qxqx$ is amenable, we can immediately deduce from the work of Nica discussed in \secref{Toeplitzalgs} that the Toeplitz algebra $\TT(\nxnx)$ enjoys a universal property.

\begin{corollary}\label{Toepl=univ}
The Toeplitz algebra $\TT(\nxnx)$ is generated by a universal Nica-covariant isometric representation $w:\nxnx\to \TT(\nxnx)$.
\end{corollary}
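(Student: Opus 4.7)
The plan is to invoke the machinery already laid out in \secref{Toeplitzalgs}, where the only genuine verification still required is an amenability check on the enveloping group.

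First, I would observe that by \proref{qlproperty} the pair $(\qxqx,\nxnx)$ is quasi-lattice ordered, so Nica's general theory supplies a $C^*$-algebra $C^*(\qxqx,\nxnx)$ together with a universal Nica-covariant isometric representation $w:\nxnx\to C^*(\qxqx,\nxnx)$ and a canonical surjection $\pi_T:C^*(\qxqx,\nxnx)\to\TT(\nxnx)$ sending $w_x$ to the Toeplitz isometry $T_x$. The corollary will follow once $\pi_T$ is shown to be injective, since then we can identify $\TT(\nxnx)$ with $C^*(\qxqx,\nxnx)$ and transfer the universal property.

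By \cite[\S4.5]{nica} (quoted in \secref{Toeplitzalgs}), $\pi_T$ is faithful whenever the enveloping group is amenable. So the only step is to verify that $\qxqx$ is amenable. This is immediate from standard permanence properties: $\Q$ and $\qx$ are abelian, hence amenable, and an extension of an amenable group by an amenable group is amenable, so the semidirect product $\qxqx=\Q\rtimes\qx$ is amenable.

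There is no real obstacle here; the work has already been done in \proref{qlproperty} and in the general theory recalled in \secref{Toeplitzalgs}. The only thing to be careful about is to state the conclusion in the form we actually want to use later: not just that $w$ is Nica covariant, but that every Nica-covariant isometric representation $V:\nxnx\to B(\HH)$ of $\nxnx$ factors uniquely through $w$ via a $*$-homomorphism $\pi_V:\TT(\nxnx)\to C^*(V(\nxnx))$ with $\pi_V(w_x)=V_x$, and that $\TT(\nxnx)=\clsp\{w_x w_y^*:x,y\in\nxnx\}$.
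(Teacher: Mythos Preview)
Your proposal is correct and matches the paper's approach exactly: the paper does not give a separate proof of this corollary, but simply notes in the paragraph preceding it that $\qxqx$ is amenable and that one can therefore immediately deduce the universal property from Nica's work as summarised in \secref{Toeplitzalgs}. Your only addition is to spell out why $\qxqx$ is amenable (as an extension of abelian groups), which the paper takes for granted.
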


%%%%%%%%%%%%%%%%%%%%%%%%%%%%%%%
\section{The euclidean algorithm}\label{euclideanalgorithm}
%%%%%%%%%%%%%%%%%%%%%%%%%%%%%%%

Suppose that $c,d\in \N$ are relatively prime. Then we know from the usual euclidean algorithm that, for every $k\in\N$, there are integers $\alpha$ and $\beta$ such that $k=\alpha c-\beta d$. Since $\alpha+md$ and $\beta+mc$ then have the same property for each $m\in \Z$, we can further assume that $\alpha$ and $\beta$ are non-negative. Since the set 
\[
\{\alpha\in \N:\text{there exists $\beta\in \N$ such that $k=\alpha c-\beta d$\}}
\]
is bounded below, it has a smallest element, and then the corresponding $\beta$ is the smallest non-negative integer for which there exists an $\alpha$ with $k=\alpha c-\beta d$. Thus it makes sense to talk about the \emph{smallest non-negative solution} $(\alpha,\beta)$ of $k=\alpha c-\beta d$. In the proof of Theorem~\ref{toeplitzpresentation} we use the numbers $\alpha_i$ and $\beta_i$  arising in the following variation of the euclidean algorithm which computes this smallest solution $(\alpha,\beta)$. 

\begin{proposition}\label{Euclid}
Suppose $\gcd(c,d)=1$ and $k\in \N$. Define sequences $\alpha_n$, $\beta_n$ inductively as follows: 
\begin{itemize}
\item define $\alpha_0$ to be the unique non-negative integer such that $-c<k-\alpha_0c\leq 0;$
\item given $\alpha_i$ for $0\leq i\leq n$ and $\beta_i$ for $0\leq i<n$, define $\beta_n$ by
\begin{equation}\label{defbetan}
0\leq k-\Big(\sum_{i=0}^n\alpha_i\Big)c+\Big(\sum_{i=0}^n\beta_i\Big)d<d;
\end{equation}
\item given $\alpha_i$ for $0\leq i\leq n$ and $\beta_i$ for $0\leq i\leq n$, define $\alpha_{n+1}$ by
\begin{equation}\label{defalphan+1}
-c< k-\Big(\sum_{i=0}^{n+1}\alpha_i\Big)c+\Big(\sum_{i=0}^n\beta_i\Big)d\leq 0.
\end{equation}
\end{itemize}
Then there exist $n(\alpha)$ and $n(\beta)$ (which is either $n(\alpha)$ or $n(\alpha)-1$) such that $\alpha_i=0$ for $i>n(\alpha)$ and $\beta_i=0$ for $i>n(\beta)$, and the pair $(\alpha,\beta)=
\big(\sum_{i=0}^{n(\alpha)}\alpha_i,\sum_{i=0}^{n(\beta)}\beta_i\big)$
is the smallest non-negative solution of $k=\alpha c-\beta d$. 
\end{proposition}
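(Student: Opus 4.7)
The plan is to track the partial sums $A_n := \sum_{i=0}^{n}\alpha_i$ and $B_n := \sum_{i=0}^{n}\beta_i$, together with the running remainders $s_n := k - A_n c + B_n d$ and $r_n := k - A_n c + B_{n-1}d$ (using the conventions $A_{-1} = B_{-1} := 0$ and $s_{-1} := k$). Under these names the recursive conditions \eqref{defbetan} and \eqref{defalphan+1} simply assert $s_n \in [0,d)$ and $r_n \in (-c,0]$, and the defining equations become $\alpha_n = \lceil s_{n-1}/c\rceil$ and $\beta_n = \lceil -r_n/d\rceil$; in particular the algorithm is well-defined.

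Since $\gcd(c,d)=1$, the equation $k = \alpha c - \beta d$ has a smallest non-negative solution $(\alpha^*,\beta^*)$, and I would prove by induction on $n$ the a priori bounds $A_n \le \alpha^*$ and $B_n \le \beta^*$. The key ingredient is the identity
\[
(\alpha^* - A_n)c - (\beta^* - B_n)d \;=\; s_n,
\]
which holds even at $n=-1$ by the defining property of $(\alpha^*,\beta^*)$. For the $\alpha$-step, the inductive hypothesis $B_{n-1}\le\beta^*$ rewrites the identity as $(\alpha^* - A_{n-1})c = s_{n-1} + (\beta^*-B_{n-1})d \ge s_{n-1}$; integrality of the left-hand side yields $\alpha^* - A_{n-1} \ge \lceil s_{n-1}/c\rceil = \alpha_n$, so $A_n \le \alpha^*$. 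For the $\beta$-step, the same identity and $A_n \le \alpha^*$ give $(\beta^* - B_n)d = (\alpha^* - A_n)c - s_n \ge -s_n > -d$, and again integrality yields $B_n \le \beta^*$.

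Termination is then immediate: $(A_n)$ is non-decreasing and bounded above, so it stabilises from some index $N$ onward; but $A_{n+1} = A_n$ forces $\alpha_{n+1} = \lceil s_n/c\rceil = 0$, which forces $s_n = 0$, whence $\beta_{n+1} = 0$ as well. Thus $n(\alpha)$ and $n(\beta)$ exist and $A_{n(\alpha)}c - B_{n(\beta)}d = k$ exhibits the pair as a non-negative solution; combined with the upper bounds and the minimality of $(\alpha^*,\beta^*)$, this forces $(A_{n(\alpha)}, B_{n(\beta)}) = (\alpha^*, \beta^*)$. The parenthetical claim $n(\beta) \in \{n(\alpha), n(\alpha) - 1\}$ will then follow from two short arguments: if the last nonzero step is an $\alpha$-step $\alpha_N$, then $\beta_{N-1} = 0$ would force $r_{N-1} = 0$, hence $s_{N-1} = 0$, hence $\alpha_N = 0$; and $n(\beta) > n(\alpha)$ is likewise ruled out, because if $\alpha_M = 0$ with $M \ge 1$ then $r_M = s_{M-1} \in [0,d) \cap (-c,0] = \{0\}$, forcing $\beta_M = 0$.

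I do not expect a conceptual obstacle; the main work will be notational bookkeeping around the interleaved $\alpha$- and $\beta$-steps and the base case $n = 0$, where $s_{-1}$ is set equal to $k$ even though $k$ need not lie in $[0,d)$.
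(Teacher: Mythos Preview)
Your proposal is correct and takes a genuinely different route from the paper's proof.

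The paper argues by splitting into the two cases $c>d$ and $c<d$. In the case $c>d$, it observes from \eqref{defbetan} that each $\alpha_{n+1}$ (for $n\ge 0$) must be $0$ or $1$, then sets $n(\alpha)=\alpha-\alpha_0$ and proves by contradiction (using minimality of $(\alpha,\beta)$) that $\alpha_n=1$ for $1\le n\le n(\alpha)$; the identification of $n(\beta)$ then drops out of \eqref{defbetan}. The case $c<d$ is symmetric with the roles of the sequences interchanged. So the paper's proof extracts the explicit shape of the sequences (a first term, then a run of $1$'s, then $0$'s) and reads off the conclusion from that.

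Your argument is more uniform: you never split on the relative sizes of $c$ and $d$, and you never identify the individual $\alpha_n,\beta_n$. Instead you bound the partial sums $A_n,B_n$ above by the known minimal solution $(\alpha^*,\beta^*)$ via the identity $(\alpha^*-A_n)c-(\beta^*-B_n)d=s_n$, and then let monotonicity plus boundedness force termination and equality. This is cleaner and avoids the bookkeeping of the case analysis; what you lose is the explicit structural information (that for $c>d$ the $\alpha_n$ are eventually $1$ before becoming $0$), which the paper's proof provides but which is not actually used later in the paper. Your handling of the parenthetical claim $n(\beta)\in\{n(\alpha),n(\alpha)-1\}$ via the implication ``$\alpha_M=0\Rightarrow\beta_M=0$ for $M\ge 1$'' is also tidier than the paper's case-by-case identification.
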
 

\begin{proof}
We know from the discussion at the start of the section that there is a smallest solution $(\alpha,\beta)$; our problem is to show that the sequences $\{\alpha_n\}$ and $\{\beta_n\}$ behave as described. We first note that if any $\alpha_n$ or any $\beta_n$ is zero, then so are all subsequent $\alpha_i$ and $\beta_i$. We deal with the cases $c>d$ and $c<d$ separately.

Suppose that $c>d$. Then for every $n\geq 0$, Equation~\eqref{defbetan} implies that
\[
-c\leq k-\Big(\sum_{i=0}^n\alpha_i\Big)c+\Big(\sum_{i=0}^n\beta_i\Big)d-c<d-c<0.
\]
so $\alpha_{n+1}$ is either $0$ (if we have equality at the left-hand end) or $1$. So the sequence $\{\alpha_n\}$ starts off $\{\alpha_0,1,1,\cdots\}$, and is eventually always $1$ or always $0$. Since $k=\alpha c-\beta d\leq \alpha c$, we have $\alpha_0\leq \alpha$. We define $n(\alpha)=\alpha-\alpha_0$, and claim that
$\alpha_n=1$ for $1\leq n\leq n(\alpha)$. To see this, suppose to the contrary that $\alpha_n=0$ for some $n$ satisfying $1\leq n\leq n(\alpha)$. Then \eqref{defbetan} and \eqref{defalphan+1} imply that
\[
k-\Big(\sum_{i=0}^{n-1}\alpha_i\Big)c+\Big(\sum_{i=0}^{n-1}\beta_i\Big)d= 0,
\]
which, since $\sum_{i=0}^{n-1}\alpha_i=\alpha_0+n-1<\alpha_0+(\alpha-\alpha_0)=\alpha$, contradicts the assumption that $(\alpha,\beta)$ is the smallest solution. So $\alpha_n=1$ for $1\leq n\leq n(\alpha)$, as claimed. But now $\alpha=\sum_{i=0}^{n(\alpha)}\alpha_i$, and \eqref{defbetan} becomes
\begin{equation}\label{slippery}
0\leq k-\Big(\sum_{i=0}^{n(\alpha)}\alpha_i\Big)c+\Big(\sum_{i=0}^{n(\alpha)}\beta_i\Big)d
=-\beta d+\Big(\sum_{i=0}^{n(\alpha)}\beta_i\Big)d<d,
\end{equation}
which is only possible if $-\beta +\sum_{i=0}^{n(\alpha)}\beta_i=0$; then we have equality in \eqref{slippery}, and this implies that $\alpha_n=0$ and $\beta_n=0$ for $n>n(\alpha)$. So when $c>d$, $n(\alpha)=\alpha-\alpha_0$ and either $n(\beta)=n(\alpha)-1$ (if $\beta_{n(\alpha)}=0$) or $n(\beta)=n(\alpha)$ (if $\beta_{n(\alpha)}\not=0$) have the required properties.

For $c<d$, a similar argument shows that $\beta_n=1$ for $0\leq n\leq \beta-1$, so $n(\beta):=\beta-1$ and either $n(\alpha)=n(\beta)$ or $n(\alpha)=n(\beta)+1$ have the required properties.
\end{proof}

If $k\in \Z$ and $k<0$, we can apply Proposition~\ref{Euclid} to $-k$ and the pair $d$, $c$, obtaining a smallest non-negative solution of $-k=\beta d-\alpha c$. Notice that  we then have $k=\alpha c-\beta d$. This situation occurs so often that it is worth making the following simplifying convention: 

 \begin{convention}\label{convsmallestsol}
 When $k$ is an integer and we say that ``$(\alpha,\beta)$ is the smallest non-negative solution of $k=\alpha c-\beta d$,'' we mean that $(\alpha,\beta)$ is the smallest non-negative solution of $k=\alpha c-\beta d$ when $k\geq 0$ (as before), and that $(\beta,\alpha)$ is the smallest non-negative solution of $-k=\beta d-\alpha c$ when $k<0$.
 \end{convention}
 
The next proposition explains why this discussion of the euclidean algorithm is relevant to the semigroup $\nxnx$. Recall that $\lcm(a,b)$ is the lowest common multiple of $a$ and $b$.
 
\begin{prop}\label{eucliduseful}
		Suppose that $(m,a)$ and $(n,b)$ are in $\nxnx$. Then $(m+a\N)\cap (n+b\N)$ is nonempty if and only if $\gcd(a,b)|m-n$. If so, write $a'=a/\gcd(a,b)$, $b'=b/\gcd(a,b)$, and let $(\alpha,\beta)$ be the smallest non-negative solution of $(n-m)/\gcd(a,b)=\alpha a'-\beta b'$   (using Convention~\ref{convsmallestsol}). Then $l:=m+a\alpha =n+b\beta$ is the smallest element of $(m+a\N)\cap (n+b\N)$, 
and we have 
\begin{align*}
(m,a) \vee (n,b) & = (l,\lcm(a,b))\\
(m,a)\inv (l,\lcm(a,b))&= (a^{-1}(l-m), a^{-1}\lcm(a,b)) = (\alpha, b'),\\
(n,b)\inv (l,\lcm(a,b))&= (b^{-1}(l-n), b^{-1}\lcm(a,b)) = (\beta, a').
\end{align*}\end{prop}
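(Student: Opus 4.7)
The plan is to verify the four assertions in sequence, treating the nonemptiness criterion first, then the construction of the minimal element $l$, then extracting the formulas for the join and the quotients.

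First I would handle the characterisation of when $(m+a\N)\cap (n+b\N)$ is nonempty. Necessity is immediate: any common element writes $m-n$ as an element of $a\Z+b\Z=\gcd(a,b)\Z$. For sufficiency, if $\gcd(a,b)\mid m-n$, then Bezout gives integers $s,t$ with $m+as=n+bt$; translating $s\mapsto s+jb'$ and $t\mapsto t+ja'$ (where $a'=a/\gcd(a,b)$ and $b'=b/\gcd(a,b)$) preserves the equality and eventually renders both nonnegative, so the intersection is nonempty.

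Next I would construct $l$ using Proposition~\ref{Euclid}. With $d=\gcd(a,b)$, $a=da'$, $b=db'$, $\gcd(a',b')=1$, Proposition~\ref{Euclid} applied to $k=(n-m)/d$ (invoking Convention~\ref{convsmallestsol} in case $k<0$) produces the smallest non-negative pair $(\alpha,\beta)$ with $(n-m)/d=\alpha a'-\beta b'$. Multiplying through by $d$ gives $n-m=a\alpha-b\beta$, so $l:=m+a\alpha=n+b\beta$ lies in $(m+a\N)\cap(n+b\N)$. For minimality, suppose $l'=m+as'=n+bt'\in(m+a\N)\cap(n+b\N)$; then $(s',t')$ is another non-negative solution of $(n-m)/d=s'a'-t'b'$, so $(s'-\alpha)a'=(t'-\beta)b'$, and since $\gcd(a',b')=1$ there is an integer $j$ with $s'=\alpha+jb'$ and $t'=\beta+ja'$. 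The minimality of $(\alpha,\beta)$ forces $j\geq 0$ (otherwise $(\alpha-|j|b',\beta-|j|a')$ would be a smaller non-negative solution), and then $l'=l+jab'=l+j\lcm(a,b)\geq l$.

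Once $l$ is identified, the formula $(m,a)\vee(n,b)=(l,\lcm(a,b))$ is just the Remark immediately preceding the proposition, via \eqref{Vcharacterisation}. Finally, the two quotient identities follow by direct computation in $\qxqx$: using $(m,a)^{-1}=(-m/a,1/a)$ and the semidirect product multiplication,
\[
(m,a)^{-1}(l,\lcm(a,b))=\bigl(a^{-1}(l-m),\,a^{-1}\lcm(a,b)\bigr)=(\alpha,b'),
\]
since $l-m=a\alpha$ and $\lcm(a,b)/a=b'$; the analogous computation using $l-n=b\beta$ and $\lcm(a,b)/b=a'$ yields $(n,b)^{-1}(l,\lcm(a,b))=(\beta,a')$.

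The main obstacle is the minimality argument for $l$: one must ensure that the ordering of solutions in the sense used by Proposition~\ref{Euclid} (smallest $\alpha$ and smallest $\beta$ simultaneously) translates correctly into minimality of $l$ in $\N$, and must also carefully handle the sign-convention case $k<0$ from Convention~\ref{convsmallestsol}. Beyond that, everything is a routine assembly of Bezout, the variant euclidean algorithm, and the group law in $\qxqx$.
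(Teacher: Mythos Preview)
Your proposal is correct and follows essentially the same route as the paper's proof, which is quite terse: the paper simply appeals to ``the discussion at the start of the section'' for the nonemptiness criterion, asserts that the smallest non-negative solution gives the smallest common value, and declares the last two formulas ``an easy calculation.'' Your version fills in precisely the details the paper elides---in particular, your parametrisation of all non-negative solutions as $(\alpha+jb',\beta+ja')$ and the observation that $j<0$ would contradict minimality of $(\alpha,\beta)$ is exactly the content behind the paper's one-line claim---so there is no genuine difference in approach.
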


\begin{proof}
The discussion at the start of the section shows that 
\[
(m+a\N)\cap (n+b\N)\not=\emptyset\Longleftrightarrow (m+a\Z)\cap (n+b\Z)\not=\emptyset\Longleftrightarrow m\equiv n\pmod{\gcd(a,b)}.
\]
Then any solution of $(n-m)/\gcd(a,b)=\alpha a'-\beta b'$ will satisfy $m+a\alpha =n+b\beta$, and the smallest non-negative solution of $(n-m)/\gcd(a,b)=\alpha a'-\beta b'$ will give the smallest common value. The last two formulas are an easy calculation. 
\end{proof}

 %%%%%%%%%%%%%%%%%%%%%%%%%%%%
\section{A presentation for the Toeplitz algebra of $\nxnx$}\label{secpresentation}
%%%%%%%%%%%%%%%%%%%%%%%%%%%%

Our goal in  this section is to verify the following presentation for $\TT(\nxnx)$. Recall from \secref{Toeplitzalgs} that $\TT(\nxnx)$ is generated by a universal Nica-covariant representation $w:\nxnx\to \TT(\nxnx)$.

\begin{theorem} \label{toeplitzpresentation} 
Let $A$ be the universal $C^*$-algebra generated by isometries $s$ and $\{v_p:p\in \primes\}$ satisfying the relations
\begin{itemize}
\item[] \begin{itemize}
\item[(T1)]\  $v_p s =  s^p v_p$,
\smallskip
\item[(T2)]\ $v_p v_q =  v_q v_p$,
\smallskip
\item[(T3)]\  $v_p^* v_q =  v_q v_p^*$  when $p\neq q$,
\smallskip
\item[(T4)]\ $s^* v_p = s^{p-1} v_p s^*$, and
\smallskip
\item[(T5)]\ $v_p^* s^k v_p = 0$ for $1 \leq k < p$.
\end{itemize}
\end{itemize}
Then there is an isomorphism $\pi$ of $\TT(\nxnx)$ onto $A$ such that $\pi(w_{(1,1)})=s$ and $\pi(w_{(0,p)})=v_p$ for every $p\in \primes$.
\end{theorem}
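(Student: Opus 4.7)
The plan is to construct mutually inverse $*$-homomorphisms $\psi\colon A\to\TT(\nxnx)$ and $\phi\colon\TT(\nxnx)\to A$, with $\psi(s)=w_{(1,1)}$, $\psi(v_p)=w_{(0,p)}$ and $\phi(w_{(m,a)})=s^m v_a$, where $v_a:=\prod_p v_p^{e_p(a)}$ (well-defined by T2).

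To build $\psi$, set $S:=w_{(1,1)}$ and $V_p:=w_{(0,p)}$ and verify that these isometries satisfy T1--T5. The multiplicative relations T1 and T2 come for free from the computations $(0,p)(1,1)=(p,p)=(1,1)^p(0,p)$ and $(0,p)(0,q)=(0,pq)=(0,q)(0,p)$ in $\nxnx$. The starred relations T3--T5 are instances of the Nica-covariance formula \eqref{genNicacov} applied to specific pairs, with the joins furnished by Proposition~\ref{eucliduseful}. Concretely, for distinct primes $(0,p)\vee(0,q)=(0,pq)$ yields T3; the join $(1,1)\vee(0,p)=(p,p)$, together with $(1,1)^{-1}(p,p)=(p-1,p)$ and $(0,p)^{-1}(p,p)=(1,1)$, yields T4; and for $1\le k<p$ one has $p=\gcd(p,p)\nmid k$, so $(0,p)\vee(k,p)=\infty$, forcing $V_p^*S^kV_p=w_{(0,p)}^*w_{(k,p)}=0$, which is T5. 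The universal property of $A$ then produces $\psi$.

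For $\phi$ I use Corollary~\ref{Toepl=univ}, which applies as soon as the assignment $W_{(m,a)}:=s^mv_a$ is shown to be a Nica-covariant isometric representation of $\nxnx$ in $A$. Multiplicativity is immediate from the identity $v_as^n=s^{an}v_a$ (iterated T1), which gives $W_{(m,a)}W_{(n,b)}=s^{m+an}v_{ab}=W_{(m+an,ab)}$, and each $W_{(m,a)}$ is an isometry as a product of isometries. The Nica-covariance identity, computed via Proposition~\ref{eucliduseful} and symmetrised by adjoints so as to assume $m\le n$, reduces to
\[
v_a^* s^k v_b \;=\; \begin{cases} s^\alpha v_{b'} v_{a'}^*(s^*)^\beta & \text{if }d\mid k,\\[2pt] 0 & \text{otherwise,}\end{cases}
\]
where $d=\gcd(a,b)$, $a=da'$, $b=db'$ and $(\alpha,\beta)$ is the smallest non-negative solution of $k/d=\alpha a'-\beta b'$ in the sense of Convention~\ref{convsmallestsol}. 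This is the main technical obstacle.

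To prove it I first derive the crucial commutation $v_p^*s^p=sv_p^*$ by taking adjoints in T4 to obtain $v_p^*s=sv_p^*(s^*)^{p-1}$ and then right-multiplying by $s^{p-1}$. Iterating this gives $v_p^*s^{np}=s^nv_p^*$ for all $n$, and combining with T5 settles a single prime: writing $k=qp+r$ with $0\le r<p$, one finds $v_p^*s^kv_p=s^qv_p^*s^rv_p$, which equals $s^q$ when $r=0$ and vanishes when $r\ne 0$. For general $a,b$ I factor $v_a$ and $v_b$ as commuting products of $v_p^{e_p(\cdot)}$ by T2, use T3 to slide mismatched primes past one another and $v_d^*v_d=1$ to annihilate the shared factor, and iterate the single-prime reduction, which mirrors exactly the division step of the euclidean algorithm in Proposition~\ref{Euclid}. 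Each pass either produces a forbidden factor $v_p^*s^jv_p$ with $1\le j<p$ (giving $0$ by T5, matching the non-divisibility alternative) or records the next digit of the smallest solution $(\alpha,\beta)$; termination of the euclidean algorithm delivers the normal form $s^\alpha v_{b'}v_{a'}^*(s^*)^\beta$. Once Nica covariance is in hand, Corollary~\ref{Toepl=univ} supplies $\phi$, and the compositions $\phi\circ\psi$ and $\psi\circ\phi$ fix their respective generators, so $\psi$ and $\phi$ are mutually inverse isomorphisms.
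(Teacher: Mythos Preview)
Your overall strategy is exactly the paper's: build $\psi:A\to\TT(\nxnx)$ by checking that $w_{(1,1)}$ and $w_{(0,p)}$ satisfy (T1)--(T5) via Nica covariance at the indicated pairs, build $\phi:\TT(\nxnx)\to A$ by showing $(m,a)\mapsto s^mv_a$ is Nica covariant, and observe the two maps are mutual inverses on generators. The reduction of Nica covariance to the single identity for $v_a^*s^kv_b$ is also the paper's Lemma~\ref{covarianceonngenerators}.

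Where your write-up becomes imprecise is the proof of that identity in the coprime case. After cancelling the gcd (your use of $v_d^*s^kv_d$ and T5 prime-by-prime is fine and handles the vanishing alternative), you are left with $v_{a'}^*s^{k'}v_{b'}$ for \emph{coprime} $a',b'$. At this point there is no further ``single-prime reduction'' or ``forbidden factor $v_p^*s^jv_p$'' to appeal to: no prime occurs on both sides, so T5 never fires again, and one must run the alternating peeling argument with the \emph{composite-index} relation $v_{a'}^*s=s\,v_{a'}^*(s^*)^{a'-1}$ (equivalently $s^*v_{b'}=s^{b'-1}v_{b'}s^*$), matching Proposition~\ref{Euclid} step for step. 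You have the ingredients---your derivation of $v_p^*s=sv_p^*(s^*)^{p-1}$ extends to composite $a$ by induction using $s^*v_p^*=v_p^*s^{*p}$---but you should state and prove these composite versions (the paper's Lemma~\ref{relsata}) rather than invoking a prime-by-prime mechanism that does not literally apply once $\gcd(a',b')=1$. The phrase ``use T3 to slide mismatched primes past one another'' is also misplaced: T3 is used only at the very end, to turn $v_{a'}^*v_{b'}$ into $v_{b'}v_{a'}^*$ after all powers of $s$ have been extracted, not during the reduction.
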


\begin{remark}
We usually use upper case $V$ or $W$ to denote isometric representations of semigroups, and lower case $v$ or $w$ if we are claiming that they have some universal property. Similarly, we usually write $S$ for a single isometry to remind us of the unilateral shift and $s$ for a single isometry with a universal property. We discovered towards the end of this project that our notation clashes with that used by Cuntz --- indeed, we couldn't have got it more different if we had tried. (He denotes his additive generator by $u$ and his isometric representation of $\nx$ by $s$.)  By the time we noticed this,
the shift $s$ and the isometries $v_p$ were firmly embedded in our manuscript and in our minds, and it seemed to be asking for trouble to try to correct them all, so we didn't. But we apologise for any confusion this causes.
\end{remark}

To prove this theorem, we show
\begin{itemize}
\item[(a)] that the operators $S=w_{(1,1)}$ and $V_p=w_{(0,p)}$ satisfy the relations (T1--5), and hence there is a homomorphism $\rho_w:A\to \TT(\nxnx)$ such that $\rho_w(s)=w_{(1,1)}$ and $\rho_w(v_p)=w_{(0,p)}$; and

\smallskip
\item[(b)] that the formula 
\[
X_{(m,a)}:=s^mv_a:=s^m{\textstyle \prod_{p\in\primes} v_p^{e_p(a)}}
\]
defines a Nica-covariant isometric representation $X=X^{s,v}$ of $\nxnx$ in $A$, and hence induces a homomorphism $\pi_{s,v}:\TT(\nxnx)\to A$.
\end{itemize}
Given these, it is easy to check that $\rho_w$ and $\pi_{s,v}$ are inverses of each other, and $\pi:=\pi_{s,v}$ is the required isomorphism.

In view of \eqref{genNicacov} and Proposition~\ref{eucliduseful}, an isometric representation $W$ of $\nxnx$ is Nica covariant if and only if
\begin{equation}
\label{covarianceformula}
W_{(m,a)}^* W_{(n,b)} = \begin{cases}
0 & \text{ if } m\not\equiv n\pmod{\gcd(a,b)} \\
W_{(\alpha,b')} W_{(\beta,a')}^* & \text{ if }   m\equiv n \pmod{\gcd(a,b)},
\end{cases}
\end{equation}
where $a' = a/\gcd(a,b)$, $b' = b/\gcd(a,b)$, and (using Convention~\ref{convsmallestsol}) $(\alpha,\beta)$ is the smallest non-negative solution of $(n-m)/\gcd(a,b)=\alpha a'-\beta b'$. The proof of Theorem~\ref{toeplitzpresentation} uses the euclidean algorithm of Proposition~\ref{Euclid} to recognise the $\alpha$ and $\beta$ appearing on the right-hand side of \eqref{covarianceformula}.

To prove (a) we note that (T1) holds because $(0,p)(1,1)=(p,1)(0,p)$ and $W$ is a homomorphism, and (T2) holds  because $(0,p)(0,q)=(0,pq)=(0,q)(0,p)$. Equations~(T3), (T4) and (T5) are the Nica covariance relation \eqref{covarianceformula} for $(m,a)=(0,p)$ and $(n,b)=(0,q)$; for $(m,a)=(1,1)$ and $(n,b)=(0,p)$; and for $(m,a)=(0,p)$ and $(n,b)=(k,p)$, respectively.

So now we turn to (b). The first observation, which will be used many times later, is that the relations (T1)--(T5) extend to the $v_a$, as follows.

\begin{lemma}\label{relsata}
Suppose that $s$ and $\{v_p:p\in \primes\}$ are isometries satisfying the relations \textnormal{(T1)--(T5)}. Then the isometries $v_a:=\prod_{p\in\primes} v_p^{e_p(a)}$ for $a\in \N^\times$ satisfy
\begin{itemize}
\item[] \begin{itemize}
\item[(T1')]\  $v_a s =  s^a v_a$,
\smallskip
\item[(T2')]\ $v_a v_b =   v_b v_a$,
\smallskip
\item[(T3')]\ $v_a^* v_b =  v_b v_a^*$  whenever $\gcd(a,b)=1$,
\smallskip
\item[(T4')]\ $s^* v_a = s^{a-1} v_a s^*$, and
\smallskip
\item[(T5')]\ $v_a^* s^k v_a = 0$ for $1 \leq k < a$.
\end{itemize}
\end{itemize}
\end{lemma}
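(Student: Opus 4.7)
The plan is a joint induction on $\Omega(a) = \sum_{p\in\primes} e_p(a)$, the number of prime factors of $a$ counted with multiplicity. The base case $\Omega(a)=1$ reduces each primed relation directly to the matching unprimed one by definition of $v_a$. For the inductive step I factor $a = bp$ with $p$ prime and $\Omega(b) = \Omega(a)-1$, invoking the primed relations for $b$ together with the unprimed relations for $p$.

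The relations (T1'), (T2'), (T3') and (T4') are largely formal. Relation (T2') is immediate since $v_a$ and $v_b$ both lie in the commutative subalgebra generated by $\{v_q\}$, by (T2). For (T3'), when $\gcd(a,b)=1$ the primes appearing in $v_a$ and $v_b$ are disjoint, so every factor of $v_a^*$ commutes past every factor of $v_b$ using (T3) together with the adjoint of (T2). For (T1'), I would first iterate (T1) to obtain $v_p s^j = s^{pj} v_p$ for all $j\geq 0$, and then for $a=bp$ compute
$$v_a s \;=\; v_b v_p s \;=\; v_b s^p v_p \;=\; s^{bp} v_b v_p \;=\; s^a v_a,$$
using (T1') for $b$ in the form $v_b s^p = s^{bp} v_b$. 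For (T4'), chaining (T4') for $b$, then (T4) for $p$, and finally the iterated (T1') to carry $s^{p-1}$ across $v_b$ yields
$$s^* v_a \;=\; s^{b-1} v_b s^* v_p \;=\; s^{b-1} v_b s^{p-1} v_p s^* \;=\; s^{b-1}\cdot s^{b(p-1)} v_a s^* \;=\; s^{a-1} v_a s^*.$$

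The main obstacle is (T5'). The key preparatory identity, obtained by induction on $q$ from $v_b s = s^b v_b$, is $s^{bq} v_b = v_b s^q$; combined with the euclidean decomposition $k = bq + r$ with $0 \leq r < b$, this gives $s^k v_b = s^r v_b s^q$, and hence
$$v_b^* s^k v_b \;=\; \begin{cases} s^q & \text{if } r = 0,\\ 0 & \text{if } 0 < r < b,\end{cases}$$
the second case using (T5') at level $b$. Now for $a = bp$ and $1 \leq k < a$, write $k = bq + r$ and compute
$$v_a^* s^k v_a \;=\; v_p^*\bigl(v_b^* s^k v_b\bigr) v_p.$$
When $r\ne 0$ the parenthesised expression vanishes; when $r = 0$, the constraints $1 \leq k < bp$ force $1\leq q < p$, and the expression becomes $v_p^* s^q v_p = 0$ by (T5). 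This closes the induction.

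The hardest step is (T5'), since one must combine a vanishing at level $b$ with a vanishing at the prime $p$; the reduction $s^k v_b = s^r v_b s^q$ is precisely the bridge that lets the ``residue'' part of $k$ fall under (T5') for $b$ and the ``quotient'' part fall under (T5) for $p$.
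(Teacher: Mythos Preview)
Your proof is correct and for (T1$'$)--(T4$'$) it matches the paper's argument essentially line for line: the paper also proves (T4$'$) by induction on the number of prime factors of $a$, with the identical chain $s^*v_{bp}=s^{b-1}v_bs^*v_p=s^{b-1}v_bs^{p-1}v_ps^*=s^{bp-1}v_{bp}s^*$, and dismisses (T1$'$), (T2$'$), (T3$'$) as immediate.

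The only point of divergence is (T5$'$). The paper argues in two stages: first an induction on $n$ to show that $v_p^{*n}s^kv_p^n\neq 0$ implies $p^n\mid k$, and then, for general $a$, sandwiches $v_p^{*e_p(a)}s^kv_p^{e_p(a)}$ inside $v_a^*s^kv_a$ for each prime $p\mid a$ to deduce $a\mid k$. Your single induction on $\Omega(a)$, peeling off one prime at a time via the euclidean split $k=bq+r$ and the identity $s^kv_b=s^rv_bs^q$, accomplishes the same thing in one pass and is arguably cleaner: it avoids separating the prime-power case from the recombination step. Both approaches rest on the same core identity $v_b^*s^{bq+r}v_b = (v_b^*s^rv_b)s^q$, so the difference is organizational rather than substantive.
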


After we have proved this lemma, a reference to (T5), for example, could refer to either the original (T5) or to (T5').

\begin{proof}
Equations (T1'), (T2') and (T3') follow immediately from their counterparts for $a$ prime. We prove (T4') by induction on the number of prime factors of $a$. We know from (T4) that (T4') holds when $a$ is prime. Suppose that (T4') is true for every $a\in \nx$ with $n$ prime factors, and $b=aq\in \nx$ has $n+1$ prime factors. Then
\begin{align*}\label{2paragraphsabove}
s^*v_b&=s^* v_{aq}  = s^* v_{a}v_{q} = s^{a-1}  v_a s^* v_q = s^{a-1}  v_a s^{q-1} v_q s^*\\
& = s^{a-1}   s^{a(q-1)} v_av_q s^*= s^{aq -1} v_{aq} s^*=s^{b-1}v_bs^*,
\end{align*}
and we have proved (T4'). For (T5'), first prove by induction on $n$ (using (T1) as well as (T4)) that $v_{p}^{*n}s^kv_{p}^n\not=0$ implies $p^n|k$. Then
\begin{align*}
v_a^*s^kv_a\not=0&\Longrightarrow v_p^{*e_p(a)}s^kv_p^{*e_p(a)}\not=0\ \text{ for all $p|a$}\\
&\Longrightarrow p^{e_p(a)}|k\ \text{ for all $p|a$}\\
&\Longrightarrow a|k,
\end{align*}
which is a reformulation of (T5').
\end{proof}

Relations (T1') and (T2') imply that 
$X$ is an isometric representation of $\nxnx$, and it remains for us to prove that the representation $X$ in (b) satisfies the Nica-covariance relation \eqref{covarianceformula}. Since
\begin{equation}\label{theproduct}
X_{(m,a)}^*X_{(n,b)}=(s^mv_a)^* s^n v_b =  v_a^* (s^*)^m s^n v_b,
\end{equation} 
the following lemma gives the required Nica-covariance (the formula 
\eqref{formofNica} which expresses this covariance in terms of generators will be useful later).

\begin{lemma}\label{covarianceonngenerators} 
Suppose that $s$ and $\{v_p:p\in \primes\}$ are isometries satisfying the relations \textnormal{(T1)--(T5)}.
For $m,n\in \N$ and $a,b\in \nx$,  we let $a':=a/\gcd(a,b)$, $b':=b/\gcd(a,b)$, and suppose that $(\alpha, \beta)$ is the smallest non-negative solution of $(n-m)/{\gcd(a,b)}=\alpha a'-\beta b'$. Then
\begin{equation}\label{formofNica}
v_a^*s^{*m}s^nv_b= \begin{cases}0 & \text{ if } m\not\equiv n\pmod{\gcd(a,b)} \\
s^{\alpha}v_{b'}v_{a'}^*s^{*\beta}& \text{ if }   m\equiv n \pmod{\gcd(a,b)}.
\end{cases}
\end{equation} 
\end{lemma}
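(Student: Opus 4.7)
Since $s^{*m}s^n$ equals $s^{n-m}$ when $n\ge m$ and $s^{*(m-n)}$ when $n<m$, and Convention~\ref{convsmallestsol} relates the two signs by taking adjoints, it suffices to prove \eqref{formofNica} for $v_a^* s^k v_b$ with $k:=n-m\ge 0$. My plan is to first strip off the $\gcd$ and then attack the coprime core by induction driven by the euclidean algorithm of Proposition~\ref{Euclid}.

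Writing $d=\gcd(a,b)$, $a=da'$, $b=db'$, and factoring $v_a=v_dv_{a'}$, $v_b=v_dv_{b'}$ via (T2'), we have $v_a^* s^k v_b = v_{a'}^*(v_d^* s^k v_d)v_{b'}$. With $k=qd+r$, $0\le r<d$, and the iterated form $s^{qd}v_d=v_ds^q$ of (T1'), the inner factor becomes $v_d^* s^k v_d=(v_d^* s^r v_d)s^q$, which vanishes for $1\le r<d$ by (T5') and equals $s^q$ for $r=0$. This establishes the vanishing case of \eqref{formofNica} and reduces the remaining case to the coprime identity $v_{a'}^* s^{k'} v_{b'} = s^\alpha v_{b'} v_{a'}^* s^{*\beta}$ where $k':=k/d\ge 0$ and $(\alpha,\beta)$ is the smallest non-negative solution of $k'=\alpha a'-\beta b'$.

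I would prove this coprime identity by strong induction on $\alpha+\beta$. The base case $\alpha+\beta=0$ forces $k'=0$ and is exactly (T3'). For the inductive step I split into three cases paralleling Proposition~\ref{Euclid}. If $k'\ge a'$, iterated (T1') gives $v_{a'}^* s^{a'}=sv_{a'}^*$ and hence $v_{a'}^* s^{k'}v_{b'}=s\cdot v_{a'}^* s^{k'-a'}v_{b'}$, so induction at $(\alpha-1,\beta)$ suffices. If $k'<a'$ but $k'\ge b'$, the identity $s^{k'}v_{b'}=s^{k'-b'}v_{b'}s$ (from iterated (T1')) gives $v_{a'}^* s^{k'}v_{b'}=v_{a'}^* s^{k'-b'}v_{b'}\cdot s$, and induction applies at $(\alpha,\beta+1)$ generically or (at the boundary $\beta=a'-1$) at $(0,0)$, the latter finished via (T4') in the form $v_{a'}^*s=sv_{a'}^*s^{*(a'-1)}$. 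Finally, if $0<k'<\min(a',b')$, both $\alpha,\beta\ge 1$, and I combine iterated (T4') (giving $v_{a'}^* s^{k'}=sv_{a'}^* s^{*(a'-k')}$) with the identity $s^{*j}v_{b'}=s^{b'-j}v_{b'}s^*$ (its iterates covering $j>b'$ as well) to rewrite $s^{*(a'-k')}v_{b'}$ as $s^Xv_{b'}s^{*Y}$; this yields $v_{a'}^* s^{k'}v_{b'}=s^{X+1}\cdot v_{a'}^* s^{k'''} v_{b'}\cdot s^{*Y}$ with $\alpha'''+\beta'''<\alpha+\beta$, so the inductive hypothesis applies.

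The principal obstacle is the bookkeeping in the third case: one must verify that the reduced pair $(\alpha''',\beta''')$ really is the smallest non-negative solution of $k'''=\alpha''' a'-\beta''' b'$. This comes down to a modular arithmetic check mod $a'$ together with the constraint $\beta<a'$ defining the smallest non-negative solution, which also forces $\beta\ge\lceil(a'-k')/b'\rceil$ when $b'\nmid a'-k'$ and thus ensures $\beta'''\ge 0$. With this bookkeeping in place the induction closes, and combined with the $\gcd$ reduction it gives both cases of \eqref{formofNica}.
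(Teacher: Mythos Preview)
Your overall strategy---strip off the $\gcd$ via (T1')/(T2')/(T5'), then handle the coprime core $v_{a'}^* s^{k'} v_{b'}$---matches the paper's proof, and your $\gcd$ reduction is correct. However, the induction on $\alpha+\beta$ for the coprime case has a genuine gap.

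In your Case~2 ($b' \le k' < a'$), you reduce $v_{a'}^* s^{k'} v_{b'}$ to $v_{a'}^* s^{k'-b'} v_{b'} \cdot s$ and invoke the inductive hypothesis at $(\alpha,\beta+1)$. But $\alpha+(\beta+1)>\alpha+\beta$, so this is \emph{not} covered by strong induction on $\alpha+\beta$. For a concrete failure, take $a'=5$, $b'=2$, $k'=3$: the smallest solution is $(\alpha,\beta)=(1,1)$, and you would need the result at $k'-b'=1$, whose smallest solution is $(1,2)$ with strictly larger sum. (There is also a minor mis-citation in Case~1: the identity $v_{a'}^* s^{a'}=s v_{a'}^*$ does not follow from (T1') alone, since $v_{a'}$ is not unitary; it follows from iterating the adjoint of (T4'), namely $v_{a'}^* s = s v_{a'}^* s^{*(a'-1)}$.)

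The paper avoids these difficulties by not setting up an induction on $(\alpha,\beta)$ at all. It simply alternates the two moves ``pull an $s$ out to the left via the adjoint of (T4')'' and ``pull an $s^*$ out to the right via (T4')'', and appeals to Proposition~\ref{Euclid} to see that this process terminates after exactly $\sum_i\alpha_i=\alpha$ left-pulls and $\sum_i\beta_i=\beta$ right-pulls. In effect, termination and correctness are outsourced to the Euclidean algorithm, so no well-founded ordering on the intermediate states is required. Your Case~2 could be eliminated entirely by always using the left-pull $v_{a'}^* s^{k'}=s v_{a'}^* s^{*(a'-k')}$ whenever $0<k'<a'$ (your Case~3 move); but then you need a separate termination argument, and Proposition~\ref{Euclid} is precisely that.
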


\begin{proof}
First suppose that $m\not\equiv n\pmod{\gcd(a,b)}$, so that 
$(m,a) \vee (n,b)=\infty$. Then $\gcd(a,b)$ has a prime factor $p$ which does not divide $n-m$
 and we can write $n-m=cp+k$ with $0< k<p$. Now we factor $a=a_0p$, $b=b_0p$ and apply (T4') to get
\begin{align*}
v_a^*s^{*m}s^nv_b&=\begin{cases}
v_{a_0}^*v_p^*s^ks^{cp}v_pv_{b_0}&\text{if $c\geq 0$}\\
v_{a_0}^*v_p^*s^{*|c|p}s^kv_pv_{b_0}&\text{if $c<0$}
\end{cases}\\
&=\begin{cases}
v_{a_0}^*v_p^*s^kv_ps^{c}v_{b_0}&\text{if $c\geq 0$}\\
v_{a_0}^*s^{*|c|}v_p^*s^kv_pv_{b_0}&\text{if $c<0$};
\end{cases}
\end{align*}
in both cases, the inside factor $v_p^* s^k v_p$  vanishes by (T5), and we have $v_a^*s^{*m}s^nv_b = 0$, as required.

Suppose now that $m\equiv n \pmod{\gcd(a,b)}$, so that $(m,a) \vee (n,b)<\infty$. Write $k=(n-m)/\gcd(a,b)$. As in the proof of (T4'), we can use (T1') to pull $s^{k\gcd(a,b)}$ or $s^{*|k|\gcd(a,b)}$ past $v_{\gcd(a,b)}$ or $v_{\gcd(a,b)}^*$, obtaining
\[
v_a^*s^{*m}s^nv_b=
\begin{cases}
v_{a'}^*s^kv_{b'}&\text{if $k\geq 0$}\\
v_{a'}^*s^{*|k|}v_{b'}&\text{if $k<0$.}
\end{cases}
\]
It suffices by symmetry to compute $v_{a'}^*s^kv_{b'}$ for $k>0$.

Peeling one factor off $s^k$ and applying the adjoint of (T4') gives
\[
v_{a'}^* s^{k} v_{b'} =
sv_{a'}^* s^{*(a'-1)} s^{k-1 }v_{b'}^*=
\begin{cases}
sv_{a'}^*  s^{(k-a')}v_{b'}^*&\text{ if $k-a'>0$}\\
sv_{a'}^*  s^{*(a'-k)}v_{b'}^*&\text{ if $k-a'\leq 0$.}
\end{cases}
\]
If $k-a'>0$, we can peel another $s$ off $s^{k-a'}$, and pull it across $v_{a'}^*$; we can do it yet again if $k-2a'>0$. The number of times we can do this is precisely the number $\alpha_0$ appearing in the euclidean algorithm of  Proposition~\ref{Euclid}, applied to $a'$, $b'$ and $k$. We wind up with
\[
v_{a'}^* s^{k} v_{b'} =s^{\alpha_0}v_{a'}^*s^{*(\alpha_0 a'-k)}v_{b'}.
\]
Now we apply (T4') to pull factors of $s^*$ through $v_{b'}$: we can do this $\beta_0$ times, and obtain
\[
v_{a'}^* s^{k} v_{b'} =s^{\alpha_0}v_{a'}^*s^{(k-\alpha_0 a'+\beta_0 b')}v_{b'}s^{*\beta_0}.
\]
We can continue this process, using alternately the adjoint of (T4') to pull out factors  of $s$ to the left and (T4') to pull out $s^*$ to the right. This finishes when there aren't any left, and this is precisely when the euclidean algorithm terminates. Now the equations $\alpha=\sum_i\alpha_i$ and $\beta=\sum_j\beta_j$ from Proposition~\ref{Euclid} gives
\[
v_{a'}^* s^{k} v_{b'} =s^{\alpha_0}s^{\alpha_1}\cdots s^{\alpha_{n(\alpha)}}v_{a'}^*v_{b'}s^{*\beta_0}\cdots s^{*\beta_{n(\beta)}}=s^\alpha v_{a'}^*v_{b'}s^{*\beta}.
\]

Finally, we observe that since $a'$ and $b'$ are coprime,  (T3) implies that $v_{a'}^*v_{b'}=v_{b'}v_{a'}^*$. Thus, if $n-m>0$, we have $k>0$ and
\[
v_a^*s^{*m}s^nv_b =s^\alpha v_{a'}^*v_{b'}s^{*\beta}=s^\alpha v_{b'}v_{a'}^*s^{*\beta}.
\]
On the other hand, if $n-m<0$, we have
\begin{align*}
v_a^*s^{*m}s^nv_b &= v_{a'}^*s^{*|k|}v_{b'}= (v_{b'}^*s^{|k|}v_{a'})^*\\
& = (s^\beta v_{a'}v_{b'}^*s^{*\alpha})^*=s^\alpha v_{b'}v_{a'}^*s^{*\beta},
\end{align*}
where we now use Convention \ref{convsmallestsol} to interpret ``$(\alpha,\beta)$ is the smallest non-negative solution of $k=\alpha a' - \beta b'$''.
 \end{proof}

It follows from \lemref{covarianceonngenerators} that the representation $X$ is Nica covariant, and we have proved (b). This completes the proof of Theorem~\ref{toeplitzpresentation}.

%%%%%%%%%%%%%%%%%%%%%%%%%%% 
\section{The Nica spectrum of $(\qxqx, \, \nxnx)$}\label{nicaspectrum}
%%%%%%%%%%%%%%%%%%%%%%%%%%%%

To get a convenient parametrisation of the Nica spectrum, we need to identify the nonempty hereditary directed subsets of $\nxnx$. First we give some examples (which will turn out to cover all the possibilities).

\begin{proposition} \label{defAB}
Suppose $N$ is a supernatural number. For each $k\in \N$, we define
\[
A(k,N) : = \{  (m,a) \in \nxnx : a|N\text{ and }a^{-1}(k-m)\in \N\},
\]
and for each $\oldz \in \Z/N$, we recall that $r(a)$ denotes the projection of $r$ in $\Z/a$ and we define
\[
B(\oldz,N) : = \{ (m,a)  \in \nxnx : a | N \text{ and } m \in \oldz(a)\}. 
\]
Then $A(k,n)$ and $B(\oldz,N)$ are nonempty hereditary directed subsets of $\nxnx$. 
\end{proposition}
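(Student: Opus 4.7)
The plan is to verify the three defining conditions in turn for each of $A(k,N)$ and $B(\oldz,N)$: nonemptiness, heredity (i.e., downward closedness under the partial order), and directedness. My main tool is the characterisation \eqref{lequseful}, which states that $(m,a) \leq (n,b)$ if and only if $a \mid b$ and $a^{-1}(n-m) \in \N$.

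Nonemptiness is immediate, since both sets contain the identity $(0,1)$: one has $1 \mid N$, $1^{-1}(k-0) = k \in \N$, and $0$ lies in the trivial class $\oldz(1) \in \Z/1$. For heredity, suppose $(n,b)$ lies in the relevant set and $(m,a) \leq (n,b)$, so $a \mid b \mid N$ (hence $a \mid N$) and $n - m \in a\N$. For $A(k,N)$, write $k - m = (k-n) + (n-m)$; the first summand lies in $b\N \subseteq a\N$ (using $a \mid b$) and the second in $a\N$, so $a^{-1}(k-m) \in \N$. For $B(\oldz,N)$, I use that $\Z/N \twoheadrightarrow \Z/a$ factors through $\Z/b$, so $\oldz(a)$ is the image of $\oldz(b)$; since $n \equiv \oldz(b) \pmod b$ and $a \mid b$, we have $n \equiv \oldz(a) \pmod a$, and $m \equiv n \pmod a$ then yields $m \in \oldz(a)$.

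For directedness, I will construct an explicit common upper bound of the form $(l, \lcm(a,b))$. The divisibility $\lcm(a,b) \mid N$ is automatic from $a, b \mid N$. For $A(k,N)$, take $l = k$: the membership condition reduces to $0 \in \N$, and the two inequalities $(m,a), (n,b) \leq (k,\lcm(a,b))$ reduce to $a, b \mid \lcm(a,b)$ together with $a^{-1}(k-m), b^{-1}(k-n) \in \N$, both of which are hypotheses. For $B(\oldz,N)$, pick any representative $l_0$ of the class $\oldz(\lcm(a,b)) \in \Z / \lcm(a,b)$ and set $l = l_0 + t\cdot\lcm(a,b)$ for $t \in \N$ large enough that $l \geq \max(m,n)$; since $\oldz(\lcm(a,b))$ projects to both $\oldz(a)$ and $\oldz(b)$, we obtain $l \equiv m \pmod a$ and $l \equiv n \pmod b$, so the two order relations hold and $(l, \lcm(a,b)) \in B(\oldz,N)$.

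No significant obstacle is anticipated; the argument is a routine unwinding of the definitions once one has \eqref{lequseful} together with the tower of projections $\Z/N \to \Z/\lcm(a,b) \to \Z/a, \Z/b$. The only step requiring a mild extra remark is producing a representative of $\oldz(\lcm(a,b))$ in $\N$ that dominates both $m$ and $n$, which is achieved by adding a sufficient multiple of $\lcm(a,b)$ to any initial representative.
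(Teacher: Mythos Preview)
Your proof is correct and follows essentially the same approach as the paper: both verify heredity by decomposing $k-m$ (resp.\ checking compatibility of projections), and both establish directedness by producing an explicit upper bound of the form $(l,\lcm(a,b))$, with $l=k$ for $A(k,N)$ and $l$ a sufficiently large representative of $\oldz(\lcm(a,b))$ for $B(\oldz,N)$. Your version of the $B(\oldz,N)$ directedness step is marginally cleaner---you pick a large representative of $\oldz(\lcm(a,b))$ directly, whereas the paper lifts $m$ and $n$ separately to $\oldz(\lcm(a,b))$ and then forces them to coincide---but the underlying idea is identical.
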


\begin{remark}
The map $(k,c)\mapsto A(k,c)=\{(m,a)\in \nxnx:(m,a)\leq (k,c)\}$ is the standard embedding of the quasi-lattice ordered semigroup $\nxnx$ in its spectrum, see \cite[Section 6.2]{nica}.
\end{remark}

\begin{proof}[Proof of Proposition~\ref{defAB}]
Let $N$ be given. 
If $(m,a)$ and $(n,b)$ are in $A(k,N)$, then  $(k,\lcm(a,b)) \in A(k,N)$ is a common upper bound for $(m,a)$ and $(n,b)$, and hence $A(k,N)$ is directed. To see that $A(k,N)$ is hereditary, suppose $(m,a) \in A(k,N)$ and $(0,1) \leq (n,b) \leq (m,a)$. Then $b|a$ and $b^{-1}(m-n)\in \N$. Since $a|N$ and $a^{-1}(k-m)\in \N$ we have $b | a | N$, and thus $b^{-1}(k-n)   = (b^{-1}a)a^{-1}(k-m)  + b^{-1}(m-n)$ belongs to $\N$.
Thus $A(k,N)$ is hereditary. 

We next prove that $B(r,N)$ is directed. Suppose $(m,a)$ and $(n,b)$ are in $B(\oldz,N)$, so that $a$ and $b$ divide $N$ and $m\in r(a)$, $n\in r(b)$. Since $r(a)=r(\lcm(a,b))(a)$, there exists $k\in\Z$ such that $m+ak\in r(\lcm(a,b))$, and similarly there exists $l\in\Z$ such that $n+bl\in r(\lcm(a,b))$; by adding multiples of $\lcm(a,b)$ to both sides, we can further suppose that $k,l\in \N$ and that $m+ak=n+bl=t$, say. Then $t\in (m+a\N ) \cap (n+b\N)$, and  $(t,\lcm(a,b))$ is an upper bound for $(m,a)$ and $(n,b)$. Since $t=m+ak\in r(\lcm(a,b))$, and $\lcm(a,b)$ divides $N$, this upper bound belongs to $B(\oldz,N)$. Thus $B(\oldz,N)$ is directed.

To see that $B(\oldz,N)$ is hereditary, suppose $(0,1)\leq (n,b)  \leq (m,a) \in B(\oldz,N)$. Then we have $b|a$ and $b^{-1}(m-n) \in \N$. Then $m\in r(a)\subset r(b)$, and since $n$ has the form $n=m-bk$ for some $k\in \N$, we have $n\in r(b)$ also. Thus $(n,b)\in B(\oldz,N)$, as required.
\end{proof}

\begin{lemma}\label{kandN} 
Suppose $\omega$ is a nonempty  hereditary directed subset of $ \nxnx $.
For each prime $p$ let $e_p(\omega) :=\sup\{e_p(a):(m,a)\in\omega\}\in \N\cup\{\infty\}$, and define a supernatural number by $N_{\omega}:=\prod_p p^{e_p(\omega)}$. Define  
$k_\omega \in \N \cup \{\infty\}$ by 
\[
k_\omega := \sup \{ m : (m,a) \in \omega \text{ for some } a\in \nx\}.
\]
Suppose $a | N_\omega$. Then there exists $m \in \N$ such that $(m,a) \in \omega$, and moreover
\begin{enumerate}
\item if $k_\omega < \infty$, then $(k_\omega,a) \in \omega$;
\smallskip
\item if $k_\omega = \infty$, then there is a sequence $n_i \in \N$ such that $(n_i, a) \in \omega$ and $n_i \to \infty$.
\end{enumerate}
\end{lemma}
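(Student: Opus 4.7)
The plan is to establish the existence claim first, then handle the two cases separately; throughout, the common move is to use directedness to merge a candidate witness with an auxiliary element that carries the desired divisibility or size.

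For the existence of some $(m,a)\in\omega$, fix $a\mid N_\omega$. For each of the finitely many primes $p$ dividing $a$, the definition of $e_p(\omega)$ produces $(m_p,a_p)\in\omega$ with $e_p(a_p)\geq e_p(a)$. Directedness of $\omega$ yields a common upper bound $(n,c)\in\omega$ for this finite collection, and the inequalities $(m_p,a_p)\leq(n,c)$ force $a_p\mid c$ for each $p$, whence $a\mid c$. Setting $m$ equal to the remainder of $n$ modulo $a$ gives $(m,a)\leq(n,c)$ by the characterisation \eqref{lequseful}, and heredity of $\omega$ places $(m,a)$ in $\omega$.

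For part (1), assuming $k_\omega<\infty$, I would first show the supremum is attained. Choose $(m_i,a_i)\in\omega$ with $m_i\to k_\omega$ and iteratively take common upper bounds to build $(n_j,b_j)\in\omega$ dominating $(m_1,a_1),\dots,(m_j,a_j)$; then $m_j\leq n_j\leq k_\omega$, and since $\{n_j\}\subset\N$ is bounded by the integer $k_\omega$ and $n_j\to k_\omega$, one has $n_j=k_\omega$ for all large $j$, so $(k_\omega,b_j)\in\omega$. Now invoke the existence step to obtain $(m^\circ,a)\in\omega$, and let $(n,d)\in\omega$ be a common upper bound of $(k_\omega,b_j)$ and $(m^\circ,a)$. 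The first inequality forces $n\geq k_\omega$, and the sup defining $k_\omega$ forces $n\leq k_\omega$, hence $n=k_\omega$. Since $a\mid d$, the relation $(k_\omega,a)\leq(k_\omega,d)\in\omega$ together with heredity yields $(k_\omega,a)\in\omega$.

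For part (2), if $k_\omega=\infty$, take $(m_i,a_i)\in\omega$ with $m_i\to\infty$, fix $(m^\circ,a)\in\omega$ using existence, and for each $i$ let $(n_i,d_i)\in\omega$ be a common upper bound of $(m_i,a_i)$ and $(m^\circ,a)$. Then $n_i\geq m_i\to\infty$ and $a\mid d_i$, so $(n_i,a)\leq(n_i,d_i)$, and heredity puts $(n_i,a)$ in $\omega$. The main obstacle is the attainment of the supremum in part (1): directedness supplies only finite upper bounds, so one must exploit the integrality of $\N$ to see that the nondecreasing sequence $n_j$ eventually stabilises at $k_\omega$. Once attainment is in hand, combining with an element divisible by $a$ and invoking heredity is routine.
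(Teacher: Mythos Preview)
Your proof is correct and follows essentially the same approach as the paper: use directedness and heredity to realise $a$ as a second coordinate, then combine with an element witnessing $k_\omega$ (or arbitrarily large first coordinate) and drop back down via heredity. Your treatment of sup-attainment in part (1) is more elaborate than necessary---a bounded nonempty subset of $\N$ attains its supremum, so the paper simply selects $(k_\omega,d)\in\omega$ directly---but once you have that, your argument coincides with the paper's.
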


\begin{proof}
For each prime $p$ with $e_p(a)>0$, we have $e_p(a)\leq e_p(\omega)$, so there exists $(m_p,b) \in \omega$ such that 
$e_p(a)\leq e_p(b)$. Then $(m_p,p^{e_p(a)}) \leq (m_p,p^{e_p(b)}) \leq (m_p,b)$, and $(m_p, p^{e_p(a)})$ belongs to $\omega$ because $\omega$ is hereditary. Since $\omega$ is directed, the finite set $\{(m_p,p^{e_p(a)}):p\in\primes,\;e_p(a)>0\}$ has an upper bound in $\omega$, and since $\omega$ is hereditary, $(m,c):= \vee_p (m_p, p^{e_p(a)})$ also belongs to $\omega$. But $c=\prod_p p^{e_p(a)}=a$, so we have found $m$ such that $(m,a)\in\omega$. 

When $k_\omega$ is finite, there exists $d\in\N^\times$ such that $(k_\omega,d) \in \omega$, and since $\omega$ is directed, it contains the element $(l,\lcm(a,d)) = (m,a) \vee (k_\omega,d)$, where $l := \min((m+a\N) \cap (k_\omega+d\N))$.
But then $l \leq k_\omega$ by definition of $k_\omega$, and since $l \in k_\omega+ d\N$, we conclude that $l = k_\omega$. Since 
$(k_\omega,a) \leq (k_\omega,\lcm(a,d))$, we deduce that $(k_\omega,a) \in \omega$, proving part (1).

To prove part (2), suppose $a|N_\omega$, and choose $(n_1,a) \in \omega$. Assume  that
we have obtained $n_1 < n_2 < \cdots  < n_i$
such that $(n_i,a) \in \omega$.
Since $\{m\in \N: (m,b) \in \omega\}$ is unbounded, we may choose $(m,b)$ with $m > n_i$.
Then  $(n_{i+1}, \lcm(a,b)):=(n_i,a) \vee (m,b) $ belongs to $\omega$; since  $(n_{i+1},a)\leq (n_{i+1},\lcm(a,b))$ and $\omega$ is hereditary, $(n_{i+1},a)$ belongs to $\omega$, and part (2) follows by induction.
\end{proof}

\begin{remark}\label{ABorder}
Part (2) of the lemma implies that $B(\oldz,M)$ is never contained in $A(k,N)$. The possible inclusions are characterized as follows:
\begin{align*}
B(t,N) \subset B(\oldz,M)\ &\iff\ N|M  \text{ and }
t(a) = \oldz(a) \text{ for every } a|N ;  
\\
 A(k,N) \subset B(\oldz,M)\ &\iff\  N|M\text{ and } k\in r(a)\text{ for every }a|N      ;\\
 A(l,N) \subset A(k,M) \ &\iff \   N|M \text { and }
k - l \in a\N \text{ for every } a | N. 
\end{align*}
For $N\in \sn \setminus \nx$, we have $k - l \in a\N$ for every  $a | N $ if and only if $k=l$,  so for such $N$, $A(l,N) \subset A(k,M)$ implies $k = l$.
Notice also that it follows easily from these inclusions that the sets $A(k,N)$ and $B(\oldz,N)$
are distinct for different values of the parameters. 
\end{remark}

Next we show that every hereditary directed subset of $\nxnx$ is either an $A(k,N)$ or a $B(r,N)$.

\begin{proposition}\label{spectrum}
Suppose $\omega$ is a nonempty hereditary directed subset of $\nxnx$, and let $k_\omega$
and $N_\omega$ be as in \lemref{kandN};
\begin{enumerate}
\item if $k_\omega< \infty$, then $\omega = A(k_\omega,N_\omega)$;

\smallskip
\item if $k_\omega = \infty$, then there exists
$r_{\omega}\in \Z/N_{\omega}$ such that $r_{\omega}(a)=m$ for every $(m,a)\in \omega$, and we then have $\omega=B(r_{\omega},N_{\omega})$. 
\end{enumerate}
\end{proposition}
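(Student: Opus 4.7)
The plan is to analyze the two cases separately, relying on the structural information provided by Lemma~\ref{kandN} together with the two defining properties of $\omega$ (directed and hereditary).

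For case (1), when $k_\omega<\infty$, Lemma~\ref{kandN}(1) tells us that $(k_\omega,a)\in\omega$ for every $a\mid N_\omega$. I would first prove $A(k_\omega,N_\omega)\subset\omega$: if $(m,a)\in A(k_\omega,N_\omega)$, then $a\mid N_\omega$ and $a^{-1}(k_\omega-m)\in\N$, which by \eqref{lequseful} is exactly the statement that $(m,a)\leq (k_\omega,a)$, so heredity of $\omega$ places $(m,a)$ in $\omega$. Conversely, for $(m,a)\in\omega$ one has $a\mid N_\omega$ by definition of $N_\omega$, and applying directedness to the pair $(m,a),(k_\omega,a)\in\omega$ forces $(m,a)\vee(k_\omega,a)<\infty$, so by \eqref{Vcharacterisation} (or Proposition~\ref{eucliduseful}) $m\equiv k_\omega\pmod a$; the inequality $m\leq k_\omega$ is built into the definition of $k_\omega$, giving $a^{-1}(k_\omega-m)\in\N$ and thus $(m,a)\in A(k_\omega,N_\omega)$.

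For case (2), I plan to construct $r_\omega$ coordinate by coordinate using the inverse limit description $\Z/N_\omega=\varprojlim_{a\mid N_\omega}\Z/a\Z$. For each $a\mid N_\omega$, Lemma~\ref{kandN} produces at least one $(m,a)\in\omega$, and if $(m,a),(m',a)\in\omega$, then directedness plus \eqref{Vcharacterisation} forces $m\equiv m'\pmod a$; hence $r_a:=m\bmod a\in\Z/a\Z$ is well-defined. Compatibility under $b\mid a\mid N_\omega$ is immediate: for $(m,a)\in\omega$, heredity gives $(m,b)\in\omega$ (since $(m,b)\leq (m,a)$ by \eqref{lequseful}), so $r_b=m\bmod b=r_a\bmod b$. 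This produces a coherent system and hence a unique element $r_\omega\in\Z/N_\omega$ with $r_\omega(a)=r_a$ for every $a\mid N_\omega$.

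It then remains to check $\omega=B(r_\omega,N_\omega)$. The inclusion $\omega\subset B(r_\omega,N_\omega)$ is immediate from the construction of $r_\omega$. For the reverse inclusion, given $(m,a)\in B(r_\omega,N_\omega)$, I invoke Lemma~\ref{kandN}(2) to pick $(n,a)\in\omega$ with $n>m$; by construction $n\equiv r_\omega(a)\equiv m\pmod a$, so \eqref{lequseful} gives $(m,a)\leq (n,a)$, and heredity of $\omega$ delivers $(m,a)\in\omega$. I do not anticipate a serious obstacle here: the only subtlety is making sure to use directedness (to pin down the residue class) and heredity (to populate $\omega$ with all smaller elements) in the right places, and to use Lemma~\ref{kandN}(2) in the $k_\omega=\infty$ case so that one can always find a witness $(n,a)\in\omega$ exceeding any prescribed $m$.
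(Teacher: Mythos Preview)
Your proposal is correct and follows essentially the same approach as the paper's own proof: both inclusions in case~(1) via Lemma~\ref{kandN}(1) plus heredity/directedness, and in case~(2) the construction of $r_\omega$ via well-definedness from directedness, coherence from heredity, and the reverse inclusion $B(r_\omega,N_\omega)\subset\omega$ via Lemma~\ref{kandN}(2). The only cosmetic difference is that in the $\omega\subset A(k_\omega,N_\omega)$ step the paper explicitly computes $(m,a)\vee(k_\omega,a)=(k_\omega,a)$ and then invokes heredity of $A(k_\omega,N_\omega)$, whereas you extract $m\equiv k_\omega\pmod a$ directly from \eqref{Vcharacterisation}; these are equivalent.
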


\begin{proof}
Suppose first that $k_\omega < \infty$, and $(m,a) \in A(k_\omega,N_\omega)$. Then $a| N_\omega$ and 
 $a^{-1}(k_\omega-m) \in \N$. Then part (1) of \lemref{kandN} implies that
 $(k_\omega,a)$ is in $\omega$, and since $\omega$ is assumed to be directed and 
 $(m,a) \leq (k_\omega,a)$, we conclude that $(m,a)\in\omega$ and $A(k_\omega,N_\omega) \subset \omega$. On the other hand, suppose that $(m,a) \in \omega$. Since $(k_\omega,a) \in \omega$ and $\omega$ is directed, $(m,a) \vee (k_\omega,a)$ belongs to $\omega$; but $m\leq k_\omega$ by definition of $k_\omega$, so $\min ((m+a\N) \cap  (k_\omega+a\N)) = k_\omega$, $(m,a) \vee (k_\omega,a)  = (k_\omega,a) $, and $(m,a) \leq (k_\omega,a)$. Since $A(k_\omega,N_\omega)$ is hereditary,
 we conclude that $(m,a)$ is in $A(k_\omega,N_\omega)$, and $\omega \subset A(k_\omega,N_\omega)$.
 
Now suppose that $k_\omega =\infty$. We need to produce  a suitable $ \oldz_\omega \in \Z/N_\omega$. We know from \lemref{kandN} that for every $a|N_\omega$ there exists $(m,a) \in \omega$,  and
 we naturally want to take $\oldz_\omega(a)$ to be the class of $m$ in $\Z/a$. To see that this is well defined, suppose $(m,a)$ and $(n,a)$ are both in $\omega$; since $ \omega $ is directed, they have a common upper bound $(l,b)$, and then $(l-m) \equiv 0 \equiv (l-n) \pmod a$, so $m \equiv n \pmod a$.

Next we have to show that $\oldz_\omega  :=(\oldz_\omega(a))_{a|N_\omega}$ is an element of the inverse limit, or in other words that $a|b|N_\omega$ implies $\oldz_\omega(a)= \oldz_\omega(b)(a)$.
Let $m$ be such that $(m,b) \in \omega$, so that $m\in\oldz_\omega(b)$. Since $a|b$, we have $(m,a) \leq (m,b)$, and $(m,a)$ also belongs to $\omega$. Thus we also have $m \in\oldz_\omega(a)$, and
$\oldz_\omega(b)(a) =[m]=\oldz_\omega(a)$, as required. Thus there is a well-defined class $r_\omega$ in $\Z/N_\omega$ with the required property.

It is clear from the way we chose $\oldz_\omega$ that $\omega \subset B(\oldz_\omega,N_\omega)$, so it remains to show the reverse inclusion. Suppose $(m,a) \in B(\oldz_\omega,N_\omega)$. Since $a|N_\omega$ and $k_\omega = \infty$, part (2) of \lemref{kandN} implies that we can choose
$n>m$ such that  $(n,a) \in \omega$.  Now both $m$ and $n$ are in $\oldz_\omega(a)$, so $a|(n-m)$; since $n-m >0$, this implies that $a^{-1}(n-m)\in \N$, and we have $(m,a) \leq (n,a)$. Since $\omega$ is hereditary, $(m,a) \in \omega$. Thus $B(\oldz_\omega,N_\omega)\subset \omega$, and we have proved (2).
\end{proof}

\begin{corollary}\label{descOmega}
The Nica spectrum of $\nxnx$ is
\[
\Omega=\{A(k,M):M\in \sn,k\in\N\}\cup\{B(r,N):N\in \sn,r\in Z/N\}.
\]
\end{corollary}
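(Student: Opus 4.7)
The plan is to observe that this corollary is essentially a direct combination of the two preceding propositions, together with the definition of the Nica spectrum recalled in \secref{Toeplitzalgs}. By definition, $\Omega$ is the collection of all nonempty hereditary directed subsets of $\nxnx$, viewed inside $\{0,1\}^{\nxnx}$ via characteristic functions.

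First I would invoke \proref{defAB} to note that every set of the form $A(k,M)$ (with $M\in\sn$, $k\in\N$) and every set of the form $B(r,N)$ (with $N\in\sn$, $r\in\Z/N$) is a nonempty hereditary directed subset of $\nxnx$, and hence belongs to $\Omega$. This gives the inclusion $\supseteq$.

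For the reverse inclusion, I would take an arbitrary $\omega\in\Omega$ and attach to it the invariants $k_\omega\in\N\cup\{\infty\}$ and $N_\omega\in\sn$ defined in \lemref{kandN}. Then \proref{spectrum} splits into two cases: if $k_\omega<\infty$, then $\omega=A(k_\omega,N_\omega)$, which lies in the first family; if $k_\omega=\infty$, then there is a well-defined $r_\omega\in\Z/N_\omega$ with $\omega=B(r_\omega,N_\omega)$, which lies in the second family. Either way, $\omega$ appears on the right-hand side, finishing the other inclusion.

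There is essentially no obstacle here — all the work has already been done in \proref{defAB} and \proref{spectrum}, and \remref{ABorder} moreover shows that the two families are disjoint and that each $A(k,M)$ or $B(r,N)$ is determined by its parameters, so the corollary really is a clean parametrisation of $\Omega$. The only thing worth being careful about is to note that the conditions imposed on the parameters ($M,N\in\sn$ arbitrary, $k\in\N$, $r\in\Z/N$) match exactly the range of values produced by \lemref{kandN} and \proref{spectrum}, so no spurious or missing cases arise.
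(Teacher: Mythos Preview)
Your proposal is correct and matches the paper's approach exactly: the corollary is stated without proof in the paper because it is an immediate consequence of \proref{defAB} (giving $\supseteq$) and \proref{spectrum} (giving $\subseteq$), precisely as you have laid out.
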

 
To identify Cuntz's $\qn$ as the boundary quotient of $\TT(\nxnx)$, we need to identify the \emph{boundary} $\partial \Omega$ of $\Omega$, as defined in \cite[Definition 3.3]{purelinf} or \cite[Lemma~3.5]{CL2}.

\begin{proposition}\label{boundary}
Let $\nabla := \prod_p p^\infty$  be the largest supernatural number. Then the map $\oldz \mapsto B(\oldz, \nabla)$ is a homeomorphism of the finite integral adeles $\hatz$
onto the boundary $\partial \Omega$ of $(\qxqx,\nxnx)$. Under this homeomorphism, the left action of $(m,a)\in\nxnx$ on $\hatz$ is given, in terms of the ring operations in $\hatz$, by $(m,a)\cdot r=m+ar$.
\end{proposition}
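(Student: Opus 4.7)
My plan is to use the characterization from \cite[Lemma~3.5]{CL2}: an element $\omega\in\Omega$ lies in $\partial\Omega$ if and only if $\omega\cap F\neq\emptyset$ for every finite \emph{foundation set} $F\subset\nxnx$ (meaning that every element of $\nxnx$ has a common upper bound with some element of $F$). Combined with the classification in \corref{descOmega}, this reduces the problem to deciding, for each type of hereditary directed subset, whether it meets every foundation set.

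First I would rule out everything except the $B(r,\nabla)$'s. For $\omega=A(k,M)$, the singleton $\{(k+1,1)\}$ is a foundation set by \proref{eucliduseful} (since $\gcd(b,1)=1$ always), yet $(k+1,1)\notin A(k,M)$. For $\omega=B(r,N)$ with $e_p(N)<\infty$ for some prime $p$, set $q=p^{e_p(N)+1}$; then $\{(j,q):0\le j<q\}$ is a foundation set (given $(n,b)$, take $j=n\bmod q$) that $B(r,N)$ misses because $q\nmid N$. For the converse inclusion, fix $r\in\hatz$ and a foundation set $F=\{(m_i,a_i)\}_{i=1}^k$, and suppose for contradiction that no $(m_i,a_i)$ lies in $B(r,\nabla)$, so $r\not\equiv m_i\pmod{a_i}$ for every~$i$. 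With $A=\lcm(a_1,\dots,a_k)$, the surjection $\hatz\twoheadrightarrow\Z/A\Z$ lets me choose $n\in\N$ representing $r$ in $\Z/A\Z$; then $n\not\equiv m_i\pmod{a_i}$ for every~$i$, and \proref{eucliduseful} gives $(n,A)\vee(m_i,a_i)=\infty$ for every~$i$, contradicting that $F$ is a foundation set. I expect the main obstacle to be identifying this precise test element $(n,A)$ that converts the foundation property into a covering condition on residue classes in~$\hatz$.

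Finally, the map $r\mapsto B(r,\nabla)$ is injective by \remref{ABorder}, and continuous because the condition $(n,b)\in B(r,\nabla)\iff r\equiv n\pmod b$ is clopen in the profinite topology of~$\hatz$; since $\hatz$ is compact and $\Omega\subset\{0,1\}^{\nxnx}$ is Hausdorff, it is a homeomorphism onto its image, which equals $\partial\Omega$ by the preceding paragraph. For the action I would expand
\[
\her\bigl((m,a)\cdot B(r,\nabla)\bigr)=\her\{(m+an,ab):b\in\nx,\ n\equiv r\pmod{b}\}.
\]
Any $(k,c)$ below some $(m+an,ab)$ satisfies $c\mid ab$, and then $n\equiv r\pmod b$ forces $an\equiv ar\pmod c$, hence $k\equiv m+ar\pmod c$, giving containment in $B(m+ar,\nabla)$. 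Conversely, given $(k,c)\in B(m+ar,\nabla)$, take $b=c$ and choose $n\in\N$ with $n\equiv r\pmod c$ and $m+an\ge k$; then $(k,c)\le(m+an,ac)$ lies in the hereditary closure, showing that the transported action on $\hatz$ is $(m,a)\cdot r=m+ar$.
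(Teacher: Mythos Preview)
Your proof is correct, but it takes a genuinely different route from the paper's. The paper uses the order-theoretic definition of $\partial\Omega$ as the closure of the \emph{maximal} hereditary directed subsets: it reads off from the inclusion analysis in \remref{ABorder} that the maximal elements are exactly the $B(r,\nabla)$, and then invokes (the forward-referenced) \lemref{omegaB} to see that $r\mapsto B(r,\nabla)$ is a homeomorphism of the compact space $\hatz$ onto its image, which is therefore already closed and hence equals $\partial\Omega$. The action is then computed in one line via \eqref{calcaction}. By contrast, you work with the relation-theoretic characterisation of $\partial\Omega$ via foundation sets, exhibit explicit foundation sets witnessing that each $A(k,M)$ and each $B(r,N)$ with $N\neq\nabla$ lies outside the boundary, and then run a direct compactness argument for the homeomorphism rather than appealing to \lemref{omegaB}. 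Your argument for the action spells out both inclusions where the paper is more terse.

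What each approach buys: the paper's route is shorter and more conceptual, but it leans on \remref{ABorder} and on \lemref{omegaB} (stated after the proposition). Your route is more self-contained and constructive --- the explicit foundation sets $\{(k+1,1)\}$ and $\{(j,p^{e_p(N)+1}):0\le j<p^{e_p(N)+1}\}$ are exactly the kind of data one wants when later identifying the boundary \emph{quotient} (compare the proof of \thmref{qnisboundaryquotient}). One small caution: the form of \cite[Lemma~3.5]{CL2} used in this paper is $\partial\Omega=\Omega(\mathcal F)$, where the spectrum $\Omega(\mathcal F)$ is an invariant set and the relations are tested at all translates (see the proof of \thmref{qnisboundaryquotient}); your ``meets every foundation set'' version is equivalent here, but if you want to be safe you can note that your converse step actually shows every $B(s,\nabla)$ meets every $F\in\mathcal F$, and that for $(m,a)\in B(r,\nabla)$ the translated relation at $(m,a)$ reduces to the same condition for $B((r-m)/a,\nabla)$.
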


A substantial part of the argument works in greater generality, and this generality will be useful for the construction of KMS states.

\begin{lemma}\label{omegaB} 
The subset $\Omega_B := \{B{(\oldz,N)} \in \Omega : N\in \sn , \ \oldz \in \Z/N\}$ is a closed subset of $\Omega$. For each fixed $N\in \sn$, the map $\oldz\mapsto B(\oldz,N)$ is a homeomorphism of $\Z/N$ onto a closed subset of $\Omega_B$.
\end{lemma}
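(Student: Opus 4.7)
The plan is to reduce both claims to elementary observations about the product topology on $\Omega \subseteq \{0,1\}^{\nxnx}$, together with the standard fact that a continuous injection from a compact space to a Hausdorff space is a homeomorphism onto a closed image. The main content is packaged into a single distinguishing condition that separates the $B$-type hereditary sets from the $A$-type ones.

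For closedness of $\Omega_B$, the key observation is that $\omega \in \Omega_B$ if and only if $(m,1) \in \omega$ for every $m \in \N$. Indeed, for any $B(r,N)$ we have $1 \mid N$ and every integer $m$ lies in the unique class $r(1) \in \Z/1\Z$, so $(m,1) \in B(r,N)$. Conversely, if $(m,1) \in \omega$ for all $m$, then $k_\omega = \infty$, and Proposition~\ref{spectrum} forces $\omega = B(r_\omega, N_\omega) \in \Omega_B$. Hence
\[
\Omega_B \;=\; \bigcap_{m\in\N}\{\omega\in\Omega : (m,1)\in\omega\},
\]
which is closed because each condition $(m,1)\in\omega$ is clopen in the product topology.

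For the second claim, fix $N \in \sn$ and set $\phi_N(r):=B(r,N)$. Injectivity is immediate: if $B(r,N)=B(r',N)$, then for every $a\mid N$ the residue classes $r(a),r'(a)\in\Z/a\Z$ contain the same integers and hence coincide, so $r = r'$ in the inverse limit. For continuity, it suffices (by the definition of the product topology) to check that for each $(m,a)\in\nxnx$ the scalar-valued map $r\mapsto \chi_{B(r,N)}(m,a)$ is continuous on $\Z/N$. When $a\nmid N$ this map is identically zero; when $a\mid N$, membership of $(m,a)$ in $B(r,N)$ depends only on the projection $r(a)\in\Z/a\Z$, and this projection is continuous into the discrete space $\Z/a\Z$ by construction of the inverse-limit topology, so the map is locally constant.

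Finally, since $\Z/N = \varprojlim(\Z/a\Z : a\mid N)$ is compact and $\Omega_B$ is Hausdorff (being a subspace of $\{0,1\}^{\nxnx}$), the continuous injection $\phi_N$ is automatically a homeomorphism onto its image, and the image is compact and therefore closed in $\Omega_B$. I do not expect any serious obstacle: the only subtle point is recognising the correct separating condition $\{(m,1):m\in\N\}\subseteq\omega$ that cuts out $\Omega_B$ from $\Omega$, and this follows at once from the classification in Proposition~\ref{spectrum}.
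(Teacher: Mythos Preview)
Your proof is correct and rests on the same two pillars as the paper's: the classification in Proposition~\ref{spectrum} (to identify $\Omega_B$ with the set of $\omega$ having $k_\omega=\infty$) and the compact--Hausdorff argument for the homeomorphism. The one genuine difference is in the closedness argument: the paper takes a convergent net $B(r_\lambda,N_\lambda)\to\omega$, picks any $(m,a)\in\omega$, and observes that eventually $(m+ka,a)\in B(r_\lambda,N_\lambda)$ for all $k$, forcing $k_\omega=\infty$; you instead write $\Omega_B=\bigcap_{m\in\N}\{\omega:(m,1)\in\omega\}$ directly as an intersection of clopen sets. Your version is a little slicker---it avoids the net argument and the auxiliary observation about translates $(m+ka,a)$---but both routes amount to detecting $k_\omega=\infty$ via membership of an explicit unbounded family of elements.
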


\begin{proof}
Suppose that $B(r_\lambda,N_\lambda)\to \omega$ in $\Omega$, so that
\[
B(r_\lambda,N_\lambda)^{^{\wedge}}(1_{m,a})\to \hat\omega(1_{m,a})\ \text{ for every $(m,a)\in\nxnx$.}
\]
Since the sets in $\Omega$ are non-empty, there exists $(m,a)$ such that $\hat\omega(1_{m,a})=1$. Then there exists $\lambda_0$ such that
\begin{align*}
\lambda\geq \lambda_0
&\Longrightarrow B(r_\lambda,N_\lambda)^{^{\wedge}}(1_{m,a})=1\\
&\Longrightarrow a|N_\lambda\text{ and } m\in r_\lambda(a)\\
&\Longrightarrow B(r_\lambda,N_\lambda)^{^{\wedge}}(1_{m+ka,a})=1\text{ for all $k\in \N$.}
\end{align*}
But this implies that the integer $k_{\omega}$ in Lemma~\ref{kandN} is infinity, and Proposition~\ref{spectrum}(2) implies that $\omega=B(\oldz_\omega,N_\omega)$. Thus $\Omega_B$ is closed. 

Since $\Z/N$ is compact and $r\mapsto B(r,N)$ is injective (see Remark~\ref{ABorder}), it suffices to prove that $r\mapsto B(r,N)$ is continuous. So suppose that $r_\lambda\to r$ in $\Z/N$, and let $a\in \nx$; we need to show that
\begin{equation}\label{togetctuity}
B(r_\lambda,N)^{^{\wedge}}(1_{m,a})\to B(r,N)^{^{\wedge}}(1_{m,a})\ \text{ for every $m\in \N$.}
\end{equation}
If $a\notdiv N$, then $B(r_\lambda,N)^{^{\wedge}}(1_{m,a})=0=B(r,N)^{^{\wedge}}(1_{m,a})$. So suppose $a|N$. Then
\[
B(r,N)^{^{\wedge}}(1_{m,a})
=\begin{cases}
1&\text{ if $m\in r(a)$,}\\
0&\text{ otherwise.}
\end{cases}
\]
Since the maps $r\mapsto r(a)$ are continuous, we can choose $\lambda_0$ such that $\lambda\geq \lambda_0\Longrightarrow r_\lambda(a)=r(a)$. But then $m\in r_{\lambda}(a)$ if and only if $m\in r(a)$, and 
\[
\lambda\geq \lambda_0\Longrightarrow
B(r_\lambda,N)^{^{\wedge}}(1_{m,a})= B(r,N)^{^{\wedge}}(1_{m,a}),
\] 
confirming \eqref{togetctuity}.
\end{proof}

\begin{proof}[Proof of Proposition~\ref{boundary}]
By definition, $\partial \Omega$ is the closure in the Nica spectrum $\Omega$ of the set of maximal hereditary directed subsets (see \cite[Definition 3.3]{purelinf} or \cite[Lemma~3.5]{CL2}). From \proref{spectrum} and the characterization of the inclusions given in
 \remref{ABorder}, we see that a hereditary directed set is maximal if and only if it has the form  $B(\oldz,\nabla)$. Since $\{B(\oldz,\nabla): \oldz \in \hatz\}$ is the image of the compact space $\hatz$ under the homeomorphism
 $\oldz \mapsto B(\oldz,\nabla)$ from \lemref{omegaB}, it is already closed and is equal to $\partial\Omega$.
 
The action $\theta_{(m,a)}$ on $\Omega$ satisfies 
\begin{align}\label{calcaction}
\theta_{(m,a)}(B(\oldz,\nabla)) & = \her((m,a)B(r,\nabla))\\
&=\her \{(m,a)(n,b): n\in r(b)\} \notag\\
& = \her  \{(m+an,ab):n\in r(b)\} \notag\\
& =\her \{(k, ab) : k \in (m+ar)(ab)\notag\},
\end{align}
since $n\in r(b)\Longleftrightarrow an\in (ar)(ab)$. But this is precisely $\{(k,c):k\in (m+ar)(c)\}=B(m+ar\nabla)$.
\end{proof}

\begin{remark}  
We think of supernatural numbers as limits of (multiplicatively) increasing sequences in $\N^\times$, and of classes in $\Z/N$ as limits of (additively) increasing sequences in $\N$. So the set $\Omega_B$ lies ``at additive infinity" and we call it the {\em additive boundary} of $\Omega$. 
The set $\Omega_A:=\{A(k,N):N\notin\nx\}$ lies ``at multiplicative infinity'',  and we call it the {\em multiplicative boundary}. Each of these defines a natural quotient of $\TT(\nxnx)$, and we plan to discuss these quotients elsewhere. The minimal boundary $\partial \Omega$ characterised in 
\proref{boundary}  lies at both additive and multiplicative infinity, and might be more descriptively called
the {\em affine boundary} of $\Omega$. 
 \end{remark}

In our construction  of KMS states in \secref{secconstructionKMSground} we need a product decomposition of the additive boundary $\Omega_B$ over the set $\primes$. 
We describe the factors in the next Lemma, and the product decomposition in the following Proposition.

\begin{lemma}
For each prime $p$, the set 
\[
X_p := \{B(\oldz,p^k): k \in \N \cup\{\infty\}, \ \oldz \in \Z/{p^k}\},
\]
is a closed subset of $\Omega$, and each singleton set $\{B(r,p^k)\}$ with $k<\infty$ is an open subset of $X_p$. 
\end{lemma}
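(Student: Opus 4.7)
The plan splits into two independent claims: closedness of $X_p$ in $\Omega$, and openness of each singleton $\{B(r,p^k)\}$ with $k<\infty$ inside $X_p$.

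For closedness, since $X_p\subset\Omega_B$ and Lemma~\ref{omegaB} already tells us $\Omega_B$ is closed in $\Omega$, it suffices to show that any limit of a net in $X_p$ still lies in $X_p$. I would take a convergent net $B(r_\lambda,p^{k_\lambda})\to\omega$ and write $\omega=B(r_\omega,N_\omega)$ via Lemma~\ref{omegaB}. The only thing to rule out is a prime divisor $q\neq p$ of $N_\omega$: were such a $q$ present, Lemma~\ref{kandN} would produce some $(m,q)\in\omega$, giving $\hat\omega(1_{(m,q)})=1$, whereas every term of the net satisfies $\widehat{B(r_\lambda,p^{k_\lambda})}(1_{(m,q)})=0$ because $q\nmid p^{k_\lambda}$. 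This contradicts pointwise convergence of characters, so $N_\omega$ must be a (possibly infinite) power of $p$ and $\omega\in X_p$.

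For openness, fix $B(r,p^k)$ with $k<\infty$ and let $m_0\in\{0,1,\ldots,p^k-1\}$ be the unique representative of $r$. The key is to exhibit a finite collection of sub-basic open conditions in $\Omega$ that isolates this point inside $X_p$; I would take
\[
U:=\Big\{\omega\in X_p : \hat\omega(1_{(m_0,p^k)})=1\ \text{and}\ \hat\omega(1_{(m_0+ip^k,\,p^{k+1})})=0\ \text{for}\ i=0,1,\ldots,p-1\Big\}.
\]
Clearly $B(r,p^k)\in U$: the first condition holds because $m_0\in r(p^k)$, and the second because $p^{k+1}\nmid p^k$. Conversely, if $B(s,p^j)\in U$, the first condition forces $j\ge k$ and $s(p^k)=r(p^k)$; and if one had $j\ge k+1$, the unique representative $n\in\{0,\ldots,p^{k+1}-1\}$ of $s(p^{k+1})$ would reduce to $m_0$ modulo $p^k$ and hence have the form $n=m_0+ip^k$ for some $i\in\{0,\ldots,p-1\}$, giving $\hat\omega(1_{(n,p^{k+1})})=1$ and contradicting $U$'s second clause. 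Hence $j=k$ and $s=r$.

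The only delicate point is the bookkeeping at level $p^{k+1}$: one must list \emph{exactly} the $p$ translates $m_0+ip^k$, neither fewer nor more, and the residue condition modulo $p^k$ coming from the first clause of $U$ is what guarantees these $p$ values exhaust all possible lifts of $r(p^k)$ to $\Z/p^{k+1}$. Beyond this counting subtlety there is no real obstacle; both parts drop out directly from Lemma~\ref{omegaB}, Lemma~\ref{kandN}, and the definition of the product topology on $\{0,1\}^{\nxnx}$.
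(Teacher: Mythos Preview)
Your proof is correct and follows essentially the same approach as the paper: both use Lemma~\ref{omegaB} to reduce closedness to showing $N_\omega$ is a $p$-power, and both cap the exponent by testing at level $p^{k+1}$ for openness. The only cosmetic differences are that you package the openness argument as an explicit basic open set (using just the $p$ lifts $m_0+ip^k$, having already fixed $s(p^k)=r$), whereas the paper runs the dual net argument and tests all $p^{k+1}$ residues at level $p^{k+1}$; your version is slightly more economical but the underlying idea is identical.
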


\begin{proof}
Suppose that the net $\{ B(r_{\lambda},p^{k_\lambda}) : \lambda \in \Lambda\}$ converges in  $\Omega$; since $\Omega_B$ is closed in $\Omega$, the limit has the form  $ B(r,N)$, and it suffices to prove that $B(r,N)\in X_p$, or, equivalently, that $N=p^k$ for some $k\in \N\cup \{\infty\}$. Suppose that $a|N$. Then $(r(a),a)\in B(r,N)$, so $B(r,N)^{^{\wedge}}(1_{r(a),a})=1$, and there exists $\lambda_0$ such that
\begin{equation}\label{N=p^k}
\lambda\geq \lambda_0\Longrightarrow B(r_\lambda,p^{k_\lambda})^{^{\wedge}}(1_{r(a),a})=1\Longrightarrow (r(a),a)\in B(r_\lambda,p^{k_\lambda})
\Longrightarrow a|p^{k_\lambda}.
\end{equation}
Since every divisor of $N$ is a power of $p$, so is $N$. Thus $X_p$ is closed.

Now suppose that $k<\infty$, and $r\in \Z/p^k$. To see that $\{B(r,p^k)\}$ is open, it suffices to prove that if $B(r_{\lambda},p^{k_\lambda})\to B(r,p^k)$, then $B(r_{\lambda},p^{k_\lambda})$ is eventually equal to $B(r,p^k)$ (for then the complement $X_p\setminus \{B(r,p^k)\}$ is closed). Choose an integer $n$ in the class $r$. Then the element $(n,p^k)$ of $\nxnx$ belongs to $B(r,p^k)$, so the argument in \eqref{N=p^k} implies that there exists $\lambda_1$ such that 
\begin{align}\label{proplambda1}
\lambda\geq \lambda_1&\Longrightarrow(n,p^k)\in B(r_{\lambda},p^{k_\lambda})\\
&\Longrightarrow p^k|p^{k_\lambda}\text{ and }n\in r_\lambda(p^{k})\notag\\
&\Longrightarrow p^k|p^{k_\lambda}\text{ and }r=r_\lambda(p^{k});\notag
\end{align} 
in particular, we have $k\leq k_\lambda$ for $\lambda\geq \lambda_1$. On the other hand, no element of the form $(m,p^{k+1})$ belongs to $B(r,p^k)$, so $B(r,p^k)^{^{\wedge}}(1_{m,p^{k+1}})=0$, and there exists $\lambda_2$ such that
\begin{equation}\label{negmembership}
\lambda \geq \lambda_2\Longrightarrow B(r_\lambda,p^{k_\lambda})^{^{\wedge}}(1_{m,p^{k+1}})=0\Longrightarrow (m,p^{k+1})\notin B(r_\lambda,p^{k_\lambda})\text{ for $0\leq m<p^{k+1}$;}
\end{equation}
since membership of an element $(m,a)$ in a set $B(t,M)$ depends only on the class of $m$ in $\Z/a$, at least one $m$ in the range belongs to $r_\lambda(p^{k+1})$, so we deduce from \eqref{negmembership} that
\[
\lambda \geq \lambda_2\Longrightarrow p^{k+1}\notdiv p^{k_\lambda}\Longrightarrow k_\lambda\leq k.
\]
Now we choose $\lambda_3$ such that $\lambda_3\geq \lambda_1$ and $\lambda_3\geq \lambda_2$, and then $k_\lambda=k$ for $\lambda\geq \lambda_3$. Since \eqref{proplambda1} says that $r_\lambda(p^{k})=r$ for $\lambda\geq \lambda_3\geq\lambda_1$, we eventually have $r_\lambda=r_\lambda(p^{k_\lambda})=r_\lambda(p^k)=r$, and hence 
\[
\lambda\geq \lambda_3\Longrightarrow B(r_\lambda,p^{k_\lambda})=B(r,p^k), 
\]
as required.
\end{proof}

\begin{prop}\label{prodstructure} The map $f:B(\oldz,N)\mapsto \{B(\oldz(p^{e_p(N)}), p^{e_p(N)}):p\in\primes\}$ is a homeomorphism of the additive boundary $\Omega_B$ onto the product space $\prod_{p\in \primes} X_p$.
\end{prop}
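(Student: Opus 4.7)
The plan is to establish bijectivity of $f$ via the ring isomorphism of \proref{prodvsinvlim}, and then to reduce continuity of $f$ and $f^{-1}$ to a single factorization identity for the evaluations of boundary points $B(r,N)$ on the generators $1_{(m,a)}$. For bijectivity, given a family $\{B(r_p,p^{k_p})\}_{p\in\primes}\in\prod_p X_p$ with $k_p\in\N\cup\{\infty\}$ and $r_p\in\Z/p^{k_p}$, set $N:=\prod_p p^{k_p}\in\sn$; \proref{prodvsinvlim} supplies a unique $r\in\Z/N$ with $r(p^{k_p})=r_p$ for every $p$, which defines the inverse map. Injectivity of $f$ is also immediate from \remref{ABorder}, where it is noted that the parameters of a set $B(\cdot,\cdot)$ are uniquely determined.

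The heart of the argument is the following observation. For $(m,a)\in\nxnx$, let $S$ denote the finite set of primes dividing $a$. Applying \proref{prodvsinvlim} to the finite quotient $\Z/a$, the condition $m\in r(a)$ is equivalent to $m\in r(p^{e_p(a)})$ for every $p\in S$, and clearly $a|N$ is equivalent to $e_p(a)\leq e_p(N)$ for every such $p$. Combining these,
\[
\widehat{B(r,N)}(1_{(m,a)})=\prod_{p\in S}\widehat{B(r(p^{e_p(N)}),\,p^{e_p(N)})}(1_{(m,\,p^{e_p(a)})}),
\]
where the product can silently be extended to all of $\primes$ since the extra factors (with $e_p(a)=0$) are trivially $1$.

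Continuity of $f$ and $f^{-1}$ now drops out. Convergence in $\Omega$ (hence in $\Omega_B$ and in each $X_p$) is pointwise convergence of $\hat\omega$ on each $1_{(m,a)}$, while convergence in $\prod_p X_p$ is coordinatewise. If a net converges in $\Omega_B$, specializing the identity to $a=p^j$ (so $S=\{p\}$) shows that each coordinate under $f$ converges, giving continuity of $f$; conversely, if a net converges coordinatewise in $\prod_p X_p$, then for each fixed $(m,a)$ the right-hand side is a finite product of eventually stable $\{0,1\}$-valued terms, so the left-hand side stabilizes and $f^{-1}$ is continuous. The only delicate point is that \proref{prodvsinvlim} has to be applied in two regimes, once to the supernatural $N$ to obtain the coordinatization and once to the finite modulus $a$ to factor the congruence $m\in r(a)$, but both cases are covered directly by that proposition; no further input is needed.
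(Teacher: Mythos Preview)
Your proof is correct and follows essentially the same strategy as the paper for bijectivity and for the continuity of $f$. The one genuine difference is in how the inverse is handled: the paper, having shown $f$ is a continuous bijection, simply invokes compactness of $\Omega_B$ (established in \lemref{omegaB}) to conclude that $f$ is a homeomorphism, whereas you prove continuity of $f^{-1}$ directly via your factorization identity. Your route is slightly more self-contained (it does not need $\Omega_B$ to be compact), and the factorization identity is a clean unifying statement that the paper only uses implicitly in the special case $a=p^k$; on the other hand, the paper's compactness shortcut is marginally quicker once \lemref{omegaB} is in hand.
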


\begin{proof}
For $p\in \primes$ we define $f_p:\Omega_B\to X_p$ by $f_p(B(r,N))=B(r(p^{e_p(N)}), p^{e_p(N)})$. Then the maps $f_p$ are the coordinate maps of $f$, and $f$ is continuous if and only if all the $f_p$ are. So we fix $p$, and consider a convergent net $B(r_{\lambda},N_{\lambda})\to B(r,N)$ in $\Omega_B$. Let $(m,a)\in \nxnx$. Then eventually
\begin{equation}\label{given}
B(r_\lambda,N_\lambda)^{^{\wedge}}(1_{m,a})= B(r,N)^{^{\wedge}}(1_{m,a}),
\end{equation}
and we need to show that we eventually have
\begin{equation}\label{needed}
B(r_\lambda(p^{e_p(N_\lambda)}), p^{e_p(N_\lambda)})^{^{\wedge}}(1_{m,a})= B(r(p^{e_p(N)}), p^{e_p(N)})^{^{\wedge}}(1_{m,a}).
\end{equation}
Both sides of \eqref{needed} vanish unless $a=p^k$, so we just need to consider $a=p^k$. But then for all $B(t,M)$ we have
\[
B(t,M)^{^{\wedge}}(1_{m,p^k})=B(t(p^{e_p(M)}), p^{e_p(M)})^{^{\wedge}}(1_{m,p^k}),
\]
so for $a=p^k$, \eqref{needed} follows immediately from \eqref{given}. Thus $f_p$ is continuous, and so is $f$.

To see that $f$ is injective, we suppose $f(B(r,N))=f(B(s,M))$. Then
\begin{align*}
f(B(r,N))=f(B(t,M))
&\Longrightarrow f_p(B(r,N))=f_p(B(t,M))\text{ for all $p\in \primes$}\\
&\Longrightarrow B(r(p^{e_p(N)}), p^{e_p(N)})=B(t(p^{e_p(M)}), p^{e_p(M)})\text{ for all $p\in \primes$}\\
&\Longrightarrow p^{e_p(N)}=p^{e_p(M)}\text{ and }r(p^{e_p(N)})=t(p^{e_p(N)})\text{ for all $p\in \primes$}\\
&\Longrightarrow N=M\text{ and }r(a)=t(a)\text{ for all $a$ such that $a|N$}\\
&\Longrightarrow N=M\text{ and }r=t\text{ in $\Z/N=\textstyle{\varprojlim_{a|N}}\Z/a\Z$.}
\end{align*}
To see that $f$ is surjective, suppose that $\{B(r_p,p^{k_p}):p\in \primes\}$ is an element of $\prod_p X_p$. Take $N$ to be the supernatural number $\prod_p p^{k_p}$. Since the map $r\mapsto \{r(p^{k_p}):p\in \primes\}$ is a homeomorphism of $\Z/N$  onto $\prod_{p\in\primes}\Z/p^{e_p(N)}$ (by \proref{prodvsinvlim}), there exists $r\in \Z/N$ such that $r(p^{k_p})=r_p$ for all primes $p$. Then $\{B(r_p,p^{k_p})\}=f(B(r,N))$, and $f$ is onto.

We have now shown that $f$ is a bijective continuous map of the compact space $\Omega_B$ onto $\prod_p X_p$, and hence $f$ is a homeomorphism. 
\end{proof}

%%%%%%%%%%%%%%%%%%%%%%%%%%%%
\section{Cuntz's $\qn$ as a boundary quotient}\label{sectionqn}
%%%%%%%%%%%%%%%%%%%%%%%%%%%%
The $C^*$-algebra considered by Cuntz  in \cite{cun2} is the universal $C^*$-algebra $\qn$ generated by a unitary $s$ and isometries $\{u_a:a\in \nx\}$ satisfying
\begin{itemize}
\item[] \begin{itemize}
\item[(C1)]\  $u_a s = s^a u_a$ for $a\in \nx$,
\smallskip
\item[(C2)]\ $u_au_b=u_{ab}$ for $a,b\in\nx$, and
\smallskip
\item[(C3)]\ $\sum_{k=0}^{a-1}  s^k u_au_a^*s^{*k}=  1$ for $a\in\nx$.
\end{itemize}
\end{itemize}
We aim to prove that $\qn$ is the boundary quotient of the Toeplitz algebra $\TT(\nxnx)$, and it is helpful for this purpose to have a slightly different presentation of $\qn$ which looks more like the presentation of $\TT(\nxnx)$ in \thmref{toeplitzpresentation}.

\begin{prop}\label{presentqn}
$\qn$ is the universal $C^*$-algebra generated by isometries $s$ and $\{v_p: p\in \primes\}$ satisfying 
\begin{itemize}
\item[] \begin{itemize}
\item[(Q1)]\  $v_p s = s^p v_p$ for every $p\in\primes$,
\smallskip
\item[(Q2)]\ $v_p v_q = v_q v_p$ for every $p,q\in\primes$,
\smallskip
\item[(Q5)]\ $\sum_{k=0}^{p-1}  (s^k v_p) (s^k v_p)^*=  1$ for every $p\in\primes$, and
\smallskip
\item[(Q6)]\ $ss^*=1$.
\end{itemize}
\end{itemize}
We then also have
\begin{itemize}
\item[] \begin{itemize}
\item[(Q3)]\  $v_p^*v_q = v_qv_p^*$ for $p,q\in\primes$ and $p\not=q$, and
\smallskip
\item[(Q4)]\ $s^*v_p=s^{p-1}v_ps^*$ for every $p\in\primes$.
\end{itemize}
\end{itemize}
\end{prop}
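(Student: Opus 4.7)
The plan is to prove the proposition in two stages: first show that the universal $C^*$-algebra $B$ defined by (Q1), (Q2), (Q5), (Q6) is naturally isomorphic to $\qn$ as presented in (C1)--(C3), and then verify (Q3) and (Q4) directly.

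For the isomorphism, in one direction I set $v_p := u_p$ in $\qn$; then (Q1), (Q2), (Q5) are exactly (C1), (C2), (C3) specialised to $a = p$ prime, and (Q6) is part of the hypothesis that $s$ is unitary. The universal property of $B$ then yields a homomorphism $B \to \qn$. In the other direction, I define in $B$
\[
u_a := \prod_{p \in \primes} v_p^{e_p(a)},
\]
which is unambiguous by (Q2). Then $u_a u_b = u_{ab}$ is immediate from (Q2) (combining exponents), and $u_a$ is an isometry because the pairwise commutativity of the $v_p$ lets one telescope $u_a^* u_a$ using only $v_p^* v_p = 1$ (no relation like (Q3) is needed here). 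Relation (C1) follows by iterating (Q1) through the prime factorisation of $a$. The critical step is (C3), which I prove by induction on the number of prime factors of $a$. The base case is (Q5). For the inductive step $a \mapsto ap$, I index $k \in \{0, \dots, ap-1\}$ as $k = aj + i$ with $0 \le i < a$, $0 \le j < p$, use (C1) to move $s^{aj}$ past $u_a$, and factor the sum as
\[
\sum_{i=0}^{a-1} s^{i} u_a \Bigl(\sum_{j=0}^{p-1} s^j v_p v_p^* s^{*j}\Bigr) u_a^* s^{*i};
\]
the inner sum equals $1$ by (Q5), and the outer sum equals $1$ by induction. The two homomorphisms just constructed are inverse to each other on generators, hence are mutually inverse isomorphisms.

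For (Q4), I start from (Q1), multiply on the left by $s^*$ and on the right by $s^*$, and use (Q6) to get $s^* v_p = s^{p-1} v_p s^*$. For (Q3), I work in the isomorphic copy $\qn$ and exploit the Cuntz family $\{s^j u_p\}_{j=0}^{p-1}$ arising from (C3); the orthogonality of range projections gives $u_p^* s^j u_p = 0$ for $1 \le j \le p-1$. Multiplying $1 = \sum_{k=0}^{p-1} s^k u_p u_p^* s^{*k}$ on the left by $u_q$ and using $u_q s^k = s^{qk} u_q$ and (C2) to rewrite $u_q u_p = u_p u_q$, I obtain
\[
u_q = \sum_{k=0}^{p-1} s^{qk} u_p u_q u_p^* s^{*k}.
\]
Applying $u_p^*$ on the left and using $u_p^* s^{pm} = s^m u_p^*$ (from (C1) and unitarity of $s$) to split $u_p^* s^{qk} = s^{\lfloor qk/p\rfloor} u_p^* s^{qk \bmod p}$, each term with $k \ne 0$ contains the factor $u_p^* s^{qk \bmod p} u_p$, which vanishes because $\gcd(p,q) = 1$ forces $qk \bmod p \in \{1, \dots, p-1\}$ for $1 \le k \le p-1$. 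Only the $k=0$ term survives, yielding $u_p^* u_q = u_q u_p^*$.

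The main obstacle is (Q3): naively manipulating $v_p^* v_q$ through (Q5) leads to tautologies, and one must instead apply (Q5) for $p$ to $u_q$ so that the coprimality of $p$ and $q$ can be used to kill every summand except one via the Cuntz-family orthogonality $u_p^* s^j u_p = 0$. Once this coprimality trick is spotted the derivation is immediate, but finding it is the substantive content of the proposition.
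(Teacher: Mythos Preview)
Your proof is correct and follows the same overall architecture as the paper: the isomorphism is established by defining $u_a:=\prod_p v_p^{e_p(a)}$ and proving (C3) by induction on the number of prime factors via the same $k=aj+i$ splitting, and (Q4) is obtained identically by conjugating (Q1) with $s^*$ using (Q6).

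The one place you diverge is in the derivation of (Q3). The paper inserts the relation (C3) for the composite $a=pq$ between $v_p^*$ and $v_q$, obtaining
\[
v_p^*v_q=\sum_{k=0}^{pq-1} v_p^*s^k v_pv_q\,v_p^*v_q^*s^{*k}v_q,
\]
and then uses orthogonality on \emph{both} sides simultaneously ($v_p^*s^kv_p=0$ unless $p\mid k$, and $v_q^*s^{*k}v_q=0$ unless $q\mid k$) to kill every term except $k=0$. Your argument instead uses only (Q5) for the single prime $p$: you multiply $1=\sum_{k=0}^{p-1}s^ku_pu_p^*s^{*k}$ by $u_q$ on the left, commute $u_q$ past $s^k$ and $u_p$, and then hit with $u_p^*$, so that the coprimality of $p$ and $q$ makes the residues $qk\bmod p$ nonzero for $1\le k\le p-1$ and the orthogonality kills those terms. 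Both arguments rest on the same orthogonality relation $u_p^*s^ju_p=0$ for $1\le j<p$; yours has the minor advantage of not needing (C3) at the composite level $pq$, while the paper's version is more symmetric in $p$ and $q$.
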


\begin{proof}
If $s$ is unitary and $u_a$ satisfy (C1), (C2) and (C3), then clearly $s$ and $v_p:=u_p$ satisfy (Q1), (Q2), (Q5) and (Q6). Suppose, on the other hand, that $s$ and $v_p$ satisfy (Q1), (Q2), (Q5) and (Q6), and define $u_a:=\prod_{p\in\primes} v_p^{e_p(a)}$. Then (Q1) implies that $v_ps^k=s^{kp}v_p$; thus $v_p^ns=s^{p^n}v_p^n$, and it follows that $v_as=s^av_a$ for all $a$, which is (C1). Equation (Q2) implies that the $v_p$ form a commuting family, and (C2) follows easily. To prove (C3), it suffices to show that if (C3) holds for $a=b$ and $a=c$, then it holds also for $a=bc$. So suppose (C3) holds for $a=b$ and $a=c$, and note that 
\[
\{k:0\leq k<bc\}=\{l+mb:  0\leq l<b,\ 0\leq m<c  \}.
\]
Thus, using (C1) and (C2), we have
\begin{align*}
\sum_{k=0}^{bc-1}  s^k u_{bc}u_{bc}^*s^k
&=\sum_{l=0}^{b-1}\sum_{m=0}^{c-1}s^ls^{mb}u_bu_cu_c^*u_b^*s^{*mb}s^{*l}\\
&=\sum_{l=0}^{b-1}s^lu_b\Big(\sum_{m=0}^{c-1}s^mu_cu_c^*s^{*m}\Big)u_b^*s^{*l},
\end{align*}
which equals $1$ because (C3) holds for $a=c$ and $a=b$. Thus $\{s,u_a\}$ satisfies (C1)--(C3), and the two presentations are equivalent.

Since $s$ is unitary, multiplying (Q1) on the left and right by $s^*$ gives (Q4). To see (Q3), we apply (C3) with $a=pq$ and 
\begin{equation}\label{Cuntz*commute}
v_p^*v_q=v_p^*\Big(\sum_{k=0}^{pq-1} s^ku_{pq}u_{pq}^*s^{*k}\Big)v_q
=\sum_{k=0}^{pq-1}v_p^*s^kv_pv_qv_p^*v_q^*s^{*k}v_q,
\end{equation}
where we used that $u_{pq}=v_pv_q=v_qv_p$. Since $v_p^*s^kv_p=0$ unless $p|k$, and $v_q^*s^kv_q=0$ unless $q|k$, the only non-zero term in the sum on the right of \eqref{Cuntz*commute} occurs when $k=0$, and we have
\[
v_p^*v_q=v_p^*v_pv_qv_p^*v_q^*v_q=v_q  v_p^*  .
\]
\end{proof}
Clearly condition (Q3) implies that $v_m^*v_n = v_nv_m^*$ for $m,n \in \nx$ with $\gcd(m,n)=1$
(this is \cite[Lemma 3.2(c)]{cun2} and has already been observed as (T3) $\implies$ (T3') in Lemma 4.3.).
%Although this relation is stated in \cite[Lemma 3.2(c)]{cun2} for all pairs $m,n$,
%the proof sketched there only works when $\gcd(m,n)=1$.

\begin{corollary}\label{qnquotient}
Cuntz's $C^*$-algebra $\qn$ 
is the quotient of the Toeplitz algebra $\TT(\nxnx)=  C^*(s,v_p : p\in\primes  )  $ by the ideal $I $ generated by the elements $1-ss^*$ and $\{1-\sum_{k=0}^{p-1}  (s^k v_p) (s^k v_p)^*:p\in\primes\}$.
\end{corollary}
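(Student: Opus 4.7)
The strategy is to use the two universal-property presentations we already have in hand: Theorem~\ref{toeplitzpresentation} presents $\TT(\nxnx)$ as universal for isometries $s, \{v_p\}$ satisfying (T1)--(T5), and Proposition~\ref{presentqn} presents $\qn$ as universal for isometries $s,\{v_p\}$ satisfying (Q1), (Q2), (Q5), (Q6). I will construct mutually inverse surjective $*$-homomorphisms between $\qn$ and $\TT(\nxnx)/I$ by invoking these universal properties in each direction.

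First I would verify that the canonical generators of $\qn$ satisfy all five Toeplitz relations (T1)--(T5). The relations (T1) and (T2) are literally (Q1) and (Q2); the relations (T3) and (T4) are (Q3) and (Q4), which Proposition~\ref{presentqn} derives from the $Q$-relations. The only non-automatic step is (T5), and this is the one place where a small argument is needed: relation (Q5) exhibits $1$ as a sum of range projections $(s^k v_p)(s^k v_p)^*$ for $0\leq k<p$, and a projections-summing-to-$1$ calculation forces these projections to be pairwise orthogonal; since the $s^kv_p$ are isometries, pairwise orthogonality of their range projections implies $(s^kv_p)^*(s^lv_p)=0$ for $k\neq l$, and taking $l=0$ yields $v_p^*s^kv_p=0$ for $1\leq k<p$, which is (T5). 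By the universal property of $\TT(\nxnx)$, this produces a surjective $*$-homomorphism $\pi:\TT(\nxnx)\to\qn$ sending generators to generators.

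Next, since $s$ is unitary in $\qn$ (by (Q6)) and (Q5) holds there, both families of generators of $I$ are mapped to zero by $\pi$. Hence $I\subset\ker\pi$, and $\pi$ descends to a surjection $\bar\pi:\TT(\nxnx)/I\twoheadrightarrow\qn$.

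In the reverse direction, let $\dot s,\dot v_p$ denote the images of $s,v_p$ in $\TT(\nxnx)/I$. They are isometries satisfying (T1)=(Q1) and (T2)=(Q2); moreover, by definition of $I$ they also satisfy $\dot s\dot s^*=1$ (which is (Q6)) and $\sum_{k=0}^{p-1}(\dot s^k\dot v_p)(\dot s^k\dot v_p)^*=1$ (which is (Q5)). By the universal property given in Proposition~\ref{presentqn}, there is a $*$-homomorphism $\rho:\qn\to\TT(\nxnx)/I$ sending $s\mapsto\dot s$ and $v_p\mapsto\dot v_p$. Since $\bar\pi$ and $\rho$ act as identity on the respective generating sets and both $C^*$-algebras are generated by those sets, the compositions $\rho\circ\bar\pi$ and $\bar\pi\circ\rho$ are the identity maps, so $\bar\pi$ is the required isomorphism.

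There is no serious obstacle here; the argument is essentially a bookkeeping exercise once the two presentations are in place, and the only computational point worth isolating is the derivation of (T5) from (Q5) via orthogonality of range projections summing to the identity.
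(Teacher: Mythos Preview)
Your argument is correct and follows essentially the same route as the paper: both directions are obtained from the two universal presentations (Theorem~\ref{toeplitzpresentation} and Proposition~\ref{presentqn}), and the only nontrivial check is that (Q5) forces the range projections of the $s^kv_p$ to be mutually orthogonal, which yields (T5). The paper constructs the map $\qn\to\TT(\nxnx)/I$ first and $\TT(\nxnx)\to\qn$ second, whereas you do it in the opposite order, but the content is identical; one cosmetic point is that in your orthogonality step it is cleaner to set $k=0$ rather than $l=0$ to read off $v_p^*s^lv_p=0$ directly, though since $s$ is unitary in $\qn$ either choice works after taking adjoints.
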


\begin{proof} 
Relations (Q1) and (Q2) are the same as (T1) and (T2), and hence hold in any quotient of $\TT(\nxnx)$; clearly (Q5) and (Q6) hold in $\TT(\nxnx) / I$. 
So \proref{presentqn} gives a homomorphism $\pi: \qn \to \TT(\nxnx) / I$.

On the other hand relations  (T1--4) are the same as (Q1--4), and hence hold in $\qn$; (Q5) implies that the isometries $\{s^k v_p: 0 \leq k <p\}$ have mutually orthogonal ranges, which is the content of (T5). So \thmref{toeplitzpresentation}
 gives a homomorphism $\rho: \TT(\nxnx) \to \qn$ that vanishes  on $I$, and hence induces a 
homomorphism $\tilde{\rho} :  \TT(\nxnx) / I \to \qn $  which is an inverse for $\pi$. 
\end{proof}

Recall from \cite[Lemma 3.5]{CL2}
that the boundary $\partial \Omega$ of a quasi-lattice order $(G,P)$
is the spectrum  (in the sense of \cite[Definition 4.2]{topfree}) of 
the elementary relations $\prod_{x\in F} (1 - W_x W_x^*) = 0$
corresponding to the sets in the family 
\begin{equation}\label{collectionF}
\mathcal F:= \{ F\subset P: |F|<\infty\text{ and }\forall\ y\in P  \  \exists\ x\in F \text{ such that } x\vee y <\infty\}
\end{equation}
from  \cite[Definition 3.4]{CL2},
taken together with the Nica relations from \cite[Proposition 6.1]{topfree}.
 Since we are working with covariant isometric representations, we will carry the implicit assumption that the Nica relations  always hold,
so the spectrum of a set  $\mathcal R$ of extra relations
is always a subset, denoted $\Omega(\mathcal R)$, of the Nica spectrum $\Omega$. In this notation, \cite[Lemma 3.5]{CL2} says that $\partial\Omega=\Omega(\mathcal{F})$, and the set $\Omega_\mathcal{N}=\Omega(\emptyset)$ of \cite[\S6]{topfree} is just $\Omega$.

Since $(\TT(\nxnx),w)$ is universal for Nica-covariant representations, Theorem~6.4 of \cite{topfree} implies that $\TT(\nxnx)$ is canonically isomorphic to the partial crossed product $C(\Omega)\rtimes (\qxqx)$. The \emph{boundary quotient} of \cite{CL2} is then the partial crossed product $C(\partial \Omega) \rtimes (\qxqx)$, which by  \cite[Theorem 4.4]{topfree} and \cite[Proposition 6.1]{topfree} is isomorphic to the universal $C^*$-algebra generated by
a Nica-covariant semigroup of isometries $W$ subject to the 
extra (boundary) relations 
\begin{equation*}
\prod_{x\in F} (1 - W_x W_x^*) = 0 \quad \text{ for } F\in \mathcal F.
\end{equation*}

\begin{theorem}\label{qnisboundaryquotient}
Cuntz's $C^*$-algebra $\qn$ is the boundary quotient $C(\partial \Omega) \rtimes (\qxqx)$ of the Toeplitz algebra $\TT(\nxnx)$.
\end{theorem}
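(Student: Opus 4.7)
The plan is to show that the ideal $I$ of $\TT(\nxnx)$ described in \corref{qnquotient} coincides with the ``boundary ideal'' $J$ generated by the family $\{\prod_{x\in F}(1-w_xw_x^*):F\in\mathcal F\}$. Since $\qn\cong\TT(\nxnx)/I$ by that corollary and $C(\partial\Omega)\rtimes(\qxqx)\cong\TT(\nxnx)/J$ by the description of the boundary quotient just recalled, this equality of ideals produces the required isomorphism.

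For $I\subseteq J$, I first use Nica covariance and \proref{eucliduseful} to observe that $(k,p)\vee(k',p)=\infty$ for distinct $k,k'\in\{0,\ldots,p-1\}$, so the projections $w_{(k,p)}w_{(k,p)}^*$ are pairwise orthogonal in $\TT(\nxnx)$, giving
\[
1-\sum_{k=0}^{p-1}(s^kv_p)(s^kv_p)^*=\prod_{k=0}^{p-1}\bigl(1-w_{(k,p)}w_{(k,p)}^*\bigr).
\]
Thus both generators of $I$ have the boundary form corresponding to $F_0:=\{(1,1)\}$ and $F_p:=\{(k,p):0\le k<p\}$. That $F_0,F_p\in\mathcal F$ is immediate from \proref{eucliduseful}: every $(n,b)\in\nxnx$ has a finite join with $(1,1)$, and with some $(k,p)\in F_p$ upon taking $k\equiv n\pmod{\gcd(p,b)}$ in $\{0,\ldots,p-1\}$. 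Hence $I\subseteq J$.

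For $J\subseteq I$, I verify directly in $\qn$ that $\prod_{x\in F}(1-w_xw_x^*)=0$ for every $F=\{(k_i,a_i)\}_{i=1}^n\in\mathcal F$. Setting $N:=\lcm\{a_i\}$ and applying the defining property of $\mathcal F$ to $b=N$ shows that the cosets $\{k_i+a_i\Z\}_i$ cover $\Z$. By Nica covariance the projections $p_i:=w_{(k_i,a_i)}w_{(k_i,a_i)}^*$ commute pairwise, so the desired relation is equivalent to $\bigvee_i p_i=1$. The Cuntz relation (C3) applied to $N$ gives the partition of unity $1=\sum_{j=0}^{N-1}w_{(j,N)}w_{(j,N)}^*$ in $\qn$. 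Because $s$ is unitary in $\qn$, (T1$'$) gives $w_{(j+N,N)}=s^j(s^Nv_N)=s^j(v_Ns)=w_{(j,N)}s$ and hence $w_{(j+N,N)}w_{(j+N,N)}^*=w_{(j,N)}ss^*w_{(j,N)}^*=w_{(j,N)}w_{(j,N)}^*$, so this projection depends only on $j\bmod N$. For each $j\in\{0,\ldots,N-1\}$ I pick an index $i$ with $j\equiv k_i\pmod{a_i}$ and translate to $j'=j+tN\ge k_i$; then $(k_i,a_i)\le(j',N)$ in $\nxnx$, so $w_{(j,N)}w_{(j,N)}^*=w_{(j',N)}w_{(j',N)}^*\le p_i$ in $\qn$. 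Summing over $j$ yields $1\le\bigvee_i p_i$, completing the argument.

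The main obstacle is the reverse inclusion $J\subseteq I$: one must reduce the general covering condition defining an arbitrary $F\in\mathcal F$ to the single Cuntz partition of unity (C3) at $N=\lcm\{a_i\}$. The crucial bridge is the translation invariance $w_{(j+N,N)}w_{(j+N,N)}^*=w_{(j,N)}w_{(j,N)}^*$ available in $\qn$, which is a consequence of $s$ being unitary and would fail in $\TT(\nxnx)$ itself.
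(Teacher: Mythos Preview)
Your proof is correct and takes a genuinely different route from the paper's.

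The paper works at the level of the Nica spectrum: it shows that $\Omega(\mathcal F)=\Omega(\{\eqref{q4},\eqref{q5}\})$ by invoking the explicit parametrisation of $\Omega$ developed in \secref{nicaspectrum}. The reverse inclusion there proceeds by taking an $\omega$ satisfying the translates of \eqref{q4} and \eqref{q5}, and using the classification of hereditary directed sets (the $A(k,N)$ and $B(r,N)$) to force $\omega=B(r,\nabla)\in\partial\Omega$.

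You bypass the spectrum entirely and work algebraically with ideals. Your key step---reducing an arbitrary $F\in\mathcal F$ to the Cuntz relation (C3) at level $N=\lcm\{a_i\}$ via the translation invariance $w_{(j+N,N)}w_{(j+N,N)}^*=w_{(j,N)}w_{(j,N)}^*$ in $\qn$---is a clean device that replaces the spectral analysis. The argument that the cosets $k_i+a_i\Z$ cover $\Z$ follows immediately from applying the defining property of $\mathcal F$ to the elements $(j,N)$ for $0\le j<N$, since then $\gcd(a_i,N)=a_i$ forces $a_i\mid(j-k_i)$.

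What each approach buys: your argument is more self-contained, showing that \thmref{qnisboundaryquotient} is logically independent of the material in \secref{nicaspectrum}. The paper's approach, on the other hand, reuses machinery that is needed anyway for the KMS analysis, and yields the additional information that the surviving points of the spectrum are exactly the $B(r,\nabla)$.
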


\begin{proof}
Since $(\TT(\nxnx),w)$ is universal for Nica-covariant isometric representations of $\nxnx$, Corollary~\ref{qnquotient} implies that $\qn$ is universal for Nica-covariant representations $(S,V)$ of $\nxnx$ which satisfy
\begin{equation}\label{extrarelsinqn}
1-SS^*=0\ \text{ and }\ 1-\sum_{k=0}^{p-1} (S^kV_p)(S^kV_p)^*=0\ \text{ for $p\in \primes$.}
\end{equation}
Since the terms in the sum are mutually orthogonal projections, the relations \eqref{extrarelsinqn} are equivalent to 
\begin{gather}
1-SS^*=0,\ \text{ and }\label{q4}\\
\prod_{k=0}^{p-1} \big(1- (S^k V_p) (S^k V_p)^*\big)=  0\ \text{ for every $p\in \primes$.}\label{q5}
\end{gather}
We will prove that $\partial\Omega:=\Omega(\mathcal F)$ coincides with $\Omega(\{\eqref{q4},\eqref{q5}\})$.

To see that $\partial\Omega\subset\Omega(\{\eqref{q4},\eqref{q5}\})$, it suffices to show that $\{(1,1)\}$ and $\{(k,p) : 0 \leq k < p\}$ belong to $\mathcal F$.
Suppose $(m,a) \in \nxnx$. Since $(1,1) \vee (m,a) = (m,a)$ when $m > 0$ and
$(1,1) \vee (0,a) = (a,a)$, the set  $\{(1,1) \}$ is in $ \mathcal F$.
On the other hand, $m$ is in exactly one coset modulo $p$, say $m \in k+p\N$,  
and then $(m+a\N) \cap (k+p\N) \neq \emptyset$, so $(m,a) \vee (k,p) < \infty$.
Thus $\{(k,p) : 0 \leq k < p\}$ is in $\mathcal F$. 
Hence $\partial\Omega\subset\Omega(\{\eqref{q4},\eqref{q5}\})$.

For the reverse inclusion, we use the parametrization of the spectrum obtained in \lemref{spectrum}. 
Suppose $\omega\in \Omega(\{\eqref{q4},\eqref{q5}\})$. Then, since  
the spectrum of a set of relations is invariant by Proposition~4.1 of \cite{topfree},  $\omega$ is a hereditary directed subset of $\nxnx$ 
such that for all $(m,a) \in \nxnx$, the $(m,a)$-translates of the relations \eqref{q4} and \eqref{q5} (corresponding to conjugation of the relations by the isometry corresponding to $(m,a)$) hold at the point $\omega$. Thus
\begin{gather}\label{oldqp4ma}
 \hat{\omega}(1_{(m,a)} - 1_{(m+1,a)})= 0 \quad \text{ for all } (m,a) \in \nxnx,\text{ and}\\
\label{oldqp5ma}
 \displaystyle \hat{\omega}\Big(\prod_{k = 0}^{p-1} (1_{(m,a)} - 1_{(m+ak ,ap)}) \Big)  = 0 \quad \text{ for all } (m,a) \in \nxnx.
\end{gather}
From \eqref{oldqp4ma} we see that if $(m,a) \in \omega$, then $(m+1,a) \in \omega$; none of the
$A(k,n)$ have this property (take $m = k$), we have $\omega=B(\oldz,N)$ for some $N \in \sn$ and $\oldz \in \Z/N$. Now, using \eqref{oldqp5ma}, we get 
\begin{equation}\label{oldqp4z} 
\prod_{k = 0}^{p-1} \Big(B(\oldz,N)^{^{\wedge}} (1_{(m,a)}) -  B(\oldz,N)^{^{\wedge}} (1_{(m+ak ,ap)} )\Big)  = 
 0\quad \text{ for all } (m,a) \in \nxnx,
\end{equation}
Suppose now that $a | N$ and $p$ is a prime. Then for every $m\in \oldz(a)$ we have $(m,a) \in B(\oldz,N)$, and \eqref{oldqp4z} implies that there 
exists $k $ such that $(m+ak, ap) \in B(\oldz,N)$, which implies in particular that $ap|N$. Thus
 $N$ is the largest supernatural number $\nabla$,
 and $A = B(\oldz,\nabla) \in \partial \Omega$
by \proref{boundary}. Thus $\partial \Omega\supset \Omega(\{\eqref{q4},\eqref{q5}\})$ and we have proved that $\Omega(\mathcal F) = \Omega(\{\eqref{q4},\eqref{q5}\})$, as required.
\end{proof}

The \emph{core} of a quasi-lattice ordered group $(G, P)$ is the subgroup $G_0$ of $G$ generated by the monoid 
\[
P_0 = \{ x\in P: x\vee y <\infty \text{ for all } y\in P\}
\]
(see \cite[Definition 5.4]{CL2}). By \cite[Proposition 5.5]{CL2}, the partial action of $G$ on $\Omega$ is topologically free if and only if its restriction to the core $G_0$ is topologically free. So we want to identify the core:

\begin{lemma}
The core  of $(\qxqx, \nxnx)$ is $(\Z \rtimes \{1\} ,\N \rtimes \{1\})$.
\end{lemma}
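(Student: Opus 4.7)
The plan is to compute $P_0$ directly from the definition and then take the subgroup of $\qxqx$ it generates. I expect there are essentially two inclusions to verify, and the key input is the characterisation of common upper bounds from Proposition~\ref{qlproperty} (and the remark after it): two elements $(m,a)$ and $(n,b)$ of $\nxnx$ have a common upper bound if and only if $(m+a\N)\cap(n+b\N)\neq\emptyset$, equivalently (by Proposition~\ref{eucliduseful}) if and only if $\gcd(a,b)\mid m-n$.

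First I would show that $\N\rtimes\{1\}\subset P_0$. Given $(m,1)\in\N\rtimes\{1\}$ and any $(n,b)\in\nxnx$, we have $\gcd(1,b)=1$, which trivially divides $m-n$, so $(m,1)\vee(n,b)<\infty$. Hence $(m,1)\in P_0$.

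For the reverse inclusion, I would suppose $(m,a)\in P_0$ with $a>1$ and derive a contradiction by exhibiting a single element of $\nxnx$ with no common upper bound with $(m,a)$. Choose $n\in\{0,1,\ldots,a-1\}$ with $n\not\equiv m\pmod a$ (possible because $a\geq 2$), and consider the pair $(m,a)$ and $(n,a)$. Then $\gcd(a,a)=a$ does not divide $m-n$, so $(m,a)\vee(n,a)=\infty$, contradicting the assumption that $(m,a)\in P_0$. Thus $a=1$ and $P_0\subset\N\rtimes\{1\}$, giving $P_0=\N\rtimes\{1\}$.

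Finally, the core subgroup $G_0$ is by definition the subgroup of $\qxqx$ generated by $P_0$. Since $\N\rtimes\{1\}$ is closed under the group product (as a subsemigroup), $G_0=(\N\rtimes\{1\})(\N\rtimes\{1\})^{-1}=\Z\rtimes\{1\}$, because $(m,1)(n,1)^{-1}=(m-n,1)$ ranges over $\Z\rtimes\{1\}$. No step looks hard; the only mild subtlety is remembering that the obstruction to $(m,a)\vee(n,a)<\infty$ comes from the congruence condition modulo $\gcd(a,b)$, which is exactly what rules out multiplicative generators from the core.
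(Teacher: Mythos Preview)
Your proof is correct and follows essentially the same approach as the paper: both show $(m,1)\in P_0$ via the trivial divisibility condition and exclude $(m,a)$ with $a>1$ by exhibiting an element $(n,a)$ with $n\not\equiv m\pmod a$ (the paper simply takes $n=m+1$). Your final paragraph making the passage from $P_0$ to $G_0$ explicit is a detail the paper leaves to the reader.
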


\begin{proof}
Each $(m,1)$ is in the core, 
because $k := \min (m+\N) \cap (n+b\N)$ is always finite and by \eqref{Vcharacterisation}, we have $(m,1) \vee (n,b) = (k,b) \in \nxnx$.
Suppose now  $a \neq 1$; then $m \neq m+1 \pmod a$, so $(m,a) \vee ((m+1), a) = \infty$ by  \eqref{Vcharacterisation},
and thus  $(m,a)\notin P_0$.
\end{proof}

\begin{proposition}\label{paamentf}
The partial action of $\qxqx$ on the boundary $\partial \Omega$ is amenable and 
topologically free.
\end{proposition}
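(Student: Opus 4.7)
The plan is to treat amenability and topological freeness separately, using the lemma immediately preceding the proposition together with \proref{boundary}.

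For amenability, I would argue as follows. The group $\qxqx$ is a semidirect product of two abelian groups and is therefore solvable, hence amenable. A partial action of an amenable group on a locally compact Hausdorff space is automatically amenable, so the partial action of $\qxqx$ on $\partial\Omega$ is amenable. (This is the same reason, via \cite[\S4.5]{nica}, why the Nica amenability hypothesis is satisfied in our setting.)

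For topological freeness, I would reduce to the core using \cite[Proposition~5.5]{CL2} together with the preceding lemma, which identifies the core as $(\Z\rtimes\{1\},\N\rtimes\{1\})$. It then suffices to show that the restriction of the partial action to $\Z\rtimes\{1\}$ is topologically free. Under the homeomorphism $r\mapsto B(r,\nabla)$ of \proref{boundary}, the point $B(r,\nabla)$ of $\partial\Omega$ corresponds to $r\in\hatz$, and the formula in that proposition specialises to $(m,1)\cdot r = m+r$ for $m\in\N$. Since translation by $m$ is a bijection of $\hatz$, the domain of $\theta_{(m,1)}$ in $\partial\Omega$ is all of $\partial\Omega$, and the same formula $r\mapsto m+r$ extends to all $m\in\Z$.

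The key observation is then that for $m\in\Z\setminus\{0\}$, the equation $m+r=r$ in $\hatz=\prod_p\Z_p$ forces $m=0$ in $\hatz$, which is impossible since the canonical map $\Z\to\hatz$ is injective. Hence the fixed-point set of the non-identity element $(m,1)$ is empty, a fortiori has empty interior, and the partial action of the core is topologically free. Applying \cite[Proposition~5.5]{CL2} completes the proof.

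There is no serious obstacle here: the only point that requires care is confirming that the action of the core really is translation on $\hatz$ (which is immediate from the formula in \proref{boundary} since $a=1$), and that fixed points of translation by a nonzero integer do not exist in $\hatz$ (which is the injectivity of $\Z\hookrightarrow\hatz$).
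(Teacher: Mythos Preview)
Your proposal is correct and essentially matches the paper's proof: the topological freeness argument via \cite[Proposition~5.5]{CL2}, the preceding lemma identifying the core, and the fixed-point-free action of $\Z$ on $\hatz$ is exactly what the paper does. For amenability the paper phrases things slightly differently---it observes that the conditional expectation onto $C_0(\partial\Omega)$, obtained by averaging over the dual coaction of $\qxqx$, is faithful (invoking \cite[Lemma~6.5]{quasilat})---but this is just a concrete instantiation of the general principle you cite, since faithfulness of that expectation is precisely what amenability of the group $\qxqx$ buys.
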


\begin{proof}
The expectation of $C_0(\partial\Omega)\rtimes(\qxqx)$ onto $C_0(\partial\Omega)$ is obtained by averaging over the dual coaction of $\qxqx$, and hence is faithful (by the argument of \cite[Lemma~6.5]{quasilat}, for example).
Thus the  partial action of $\qxqx$ on $\partial\Omega$ is amenable. Next, recall from Proposition~\ref{boundary} that $(k,1)B(r,\nabla)=B(r+k,\nabla)$; since $B(\oldz +k, \nabla) = B(\oldz, \nabla) $ implies $k = 0$,  the core acts freely on $\partial \Omega$. The result now follows from \cite[Proposition 5.5]{CL2}. 
\end{proof}

We can now recover \cite[Theorem 3.4]{cun2} from the analysis of \cite{CL2}.

\begin{corollary}[Cuntz]\label{cuntzsimple}
The $C^*$-algebra $\qn$ is simple and purely infinite.
\end{corollary}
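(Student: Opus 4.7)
The plan is to invoke the general simplicity and pure-infiniteness criteria of \cite{CL2} applied to the boundary quotient identification $\qn\cong C(\partial\Omega)\rtimes(\qxqx)$ of \thmref{qnisboundaryquotient}. By \proref{paamentf}, the partial action of $\qxqx$ on $\partial\Omega$ is amenable and topologically free, so the reduced and full crossed products coincide, and we can apply the Anantharaman-Delaroche--style dichotomy used in \cite{CL2}: the boundary quotient is simple as soon as the partial action is (amenable, topologically free, and) minimal, and it is additionally purely infinite as soon as the action is locally contractive, in the sense that every nonempty clopen set properly contains a translate of itself.

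For minimality, I would work on $\partial\Omega$ via the homeomorphism $\oldz\mapsto B(\oldz,\nabla)$ of \proref{boundary}, so that $\partial\Omega$ is identified with $\hatz$ and the semigroup acts by the affine formula $(m,a)\cdot\oldz=m+a\oldz$. The partial action of the core subgroup $(\Z\rtimes\{1\},\N\rtimes\{1\})$ then acts by translation $\oldz\mapsto \oldz+k$, and since $\Z$ (indeed $\N$) is dense in $\hatz$, every orbit is dense, giving minimality.

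For local contractivity, I would note that the basic clopen sets of $\hatz$ are precisely the cosets $k+a\hatz$ with $k\in\N$, $a\in \nx$, and that by the calculation in \eqref{calcaction} the partial map $\theta_{(k,a)}$ sends $\hatz$ homeomorphically onto the proper clopen subset $k+a\hatz$ whenever $a>1$. Restricting this map to any nonempty clopen $U\subseteq \hatz$ produces a proper clopen subset of $U$ that is the image of $U$ under an element of the partial action, which is the local contractivity hypothesis required for pure infiniteness. Combining minimality, topological freeness, amenability and local contractivity via the relevant theorem of \cite{CL2} gives simplicity and pure infiniteness of $C(\partial\Omega)\rtimes(\qxqx)\cong\qn$.

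The main obstacle is to match the setup precisely with the hypotheses of the theorem from \cite{CL2} that we are invoking: the partial action framework requires that one check both minimality and contractivity in the partial (not global) sense, so some care is needed to verify that the translate $\theta_{(k,a)}(U)$ lies inside $U$ (not merely inside $\hatz$), which can be arranged by first shrinking $U$ to sit inside a chosen coset $k+a\hatz$. Everything else reduces to the elementary density and divisibility observations above.
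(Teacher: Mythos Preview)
Your approach is correct in spirit and essentially reconstructs what \cite[Theorem~5.1]{CL2} already packages, but the paper's proof is a two-line citation of that theorem together with \thmref{qnisboundaryquotient} and \proref{paamentf}. The point is that in the Crisp--Laca framework the boundary $\partial\Omega$ is, by construction, the unique minimal closed invariant subset of the Nica spectrum, so minimality needs no separate verification; and local contractivity of the partial action on the boundary is likewise established in general in \cite{CL2}. Thus once amenability and topological freeness are known (from \proref{paamentf}), \cite[Theorem~5.1]{CL2} applies directly and yields simplicity and pure infiniteness of the boundary quotient, which is $\qn$ by \thmref{qnisboundaryquotient}.

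Your more explicit route---checking minimality via the density of $\Z$ in $\hatz$ and local contractivity via affine contractions on cosets---is perfectly valid and has the merit of being self-contained. One small correction: as you note at the end, $\theta_{(k,a)}(U)=k+aU$ need not sit inside $U$, so the contractivity step really does require the extra manoeuvre you flag. Concretely, given nonempty open $U$, choose a basic coset $m+b\hatz\subset U$, and then the group element $(m,b)(0,p)(m,b)^{-1}=(m(1-p),p)$ carries $m+b\hatz$ onto $m+pb\hatz\subsetneq m+b\hatz\subset U$. With that fix your argument goes through; the paper simply avoids all of this by quoting the general result.
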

\begin{proof}
The boundary quotient is simple and purely infinite by \cite[Theorem 5.1]{CL2}, so
the result follows from  \thmref{qnisboundaryquotient}.
\end{proof}

\begin{corollary}
There is a faithful representation $\pi$ of $\qn$ on $\ell^2(\Z)$ such that $\pi(s)e_n=e_{n+1}$ and $\pi(v_p)e_n=e_{pn}$. 
\end{corollary}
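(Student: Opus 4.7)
The plan is to define operators on $\ell^2(\Z)$ and check they satisfy the relations (Q1), (Q2), (Q5), (Q6) of Proposition~\ref{presentqn}; the universal property then produces the homomorphism $\pi$, and simplicity of $\qn$ (Corollary~\ref{cuntzsimple}) upgrades nonvanishing to faithfulness.

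Concretely, define $S,V_p\in B(\ell^2(\Z))$ by $Se_n=e_{n+1}$ and $V_pe_n=e_{pn}$. Then $S$ is the bilateral shift, hence a unitary, so (Q6) $SS^*=1$ holds. Each $V_p$ is an isometry whose adjoint satisfies $V_p^*e_m=e_{m/p}$ if $p\mid m$ and $V_p^*e_m=0$ otherwise, so $V_pV_p^*$ is the projection onto $\overline{\mathrm{span}}\{e_{pn}:n\in\Z\}$. Relations (Q1) and (Q2) are immediate: $V_pSe_n=e_{pn+p}=S^pV_pe_n$ and $V_pV_qe_n=e_{pqn}=V_qV_pe_n$. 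For (Q5), note that $S^kV_pV_p^*S^{*k}$ is the projection onto $\overline{\mathrm{span}}\{e_{pn+k}:n\in\Z\}$; as $k$ ranges over $\{0,1,\dots,p-1\}$, these subspaces partition the basis of $\ell^2(\Z)$, so the projections sum to the identity.

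By Proposition~\ref{presentqn}, the universal property of $\qn$ yields a $*$-homomorphism $\pi\colon\qn\to B(\ell^2(\Z))$ with $\pi(s)=S$ and $\pi(v_p)=V_p$. Since $\pi(s)=S\neq 0$, the kernel of $\pi$ is a proper ideal of $\qn$, and by Corollary~\ref{cuntzsimple} we have $\ker\pi=0$, so $\pi$ is faithful.

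There is no serious obstacle here; the only point requiring the slightest care is verifying (Q5), which reduces to the elementary observation that the cosets $\{pn+k:n\in\Z\}$ for $k=0,\dots,p-1$ partition $\Z$.
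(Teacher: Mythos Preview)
Your proof is correct and follows essentially the same approach as the paper: define $S$ and $V_p$ on $\ell^2(\Z)$, verify the relations (Q1), (Q2), (Q5), (Q6) so that Proposition~\ref{presentqn} yields the representation $\pi$, and then invoke simplicity of $\qn$ (Corollary~\ref{cuntzsimple}) to conclude faithfulness from $\pi(s)\neq 0$. You have simply supplied more detail in the verification of (Q5) and (Q6) than the paper does.
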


\begin{proof}
We define isometries $S$ and $V_p$ on $\ell^2(\Z)$ by $Se_n=e_{n+1}$ and $V_pe_n=e_{pn}$, and check easily that they satisfy (Q1), (Q2), (Q5) and (Q6). Thus \proref{presentqn} gives a representation $\pi$ of $\qn$ such that $\pi(s)=S$ and $\pi(v_p)=V_p$. Since $S\not=0$, the representation is certainly not $0$, and hence by Corollary~\ref{cuntzsimple} is faithful.
\end{proof}

%%%%%%%%%%%%%%%%%%%%%%%%%%%%%
\section{The phase transition theorem }\label{secKMS}
%%%%%%%%%%%%%%%%%%%%%%%%%%%%%%

Standard arguments using the presentation in \thmref{toeplitzpresentation} show that there is a strongly continuous action  $\sigma$ of $\R$ on $\TT(\nxnx)$ such that 
\begin{equation}\label{defsigma}
\sigma_t(s) = s\ \text{ and }\ \sigma_t(v_p) = p^{it} v_p\text{ for $p\in \primes$ and $t\in \R$.}
\end{equation} 
The action $\sigma$ is spatially implemented in the identity representation of
 $\TT(\nxnx)$ on $\ell^2(\nxnx)$  by the unitary representation $U:\R\to {\mathcal U}(\ell^2(\nxnx))$ defined in terms of the usual basis by
\[
U_t e_{ (m,a)} : = a^{it} e_{(m,a)}.
\]
Our goal in this section is to describe the equilibrium states of the system  $(\TT(\nxnx), \R,\sigma)$, which we do in \thmref{maintheorem},
and to discuss the implications of this theorem for the interplay between equilibrium and symmetries. The notion of equilibrium appropriate in
this context is that of KMS states; since there are some subtleties involved, we begin by recalling the relevant definitions.

Suppose that $\alpha$ is an action of $\R$ on a $C^*$-algebra $A$. An element $a$ of $A$ is \emph{analytic} for the action $\alpha$ if the function $t\mapsto\alpha_t(a)$ is the restriction to $\R$ of an entire function on $\C$; it is shown at the start of \cite[\S8.12]{ped}, for example, that the set $A^{\textnormal{a}}$ of analytic elements is a dense $*$-subalgebra of $A$. For $\beta\in (0,\infty)$, a state $\phi$ of $A$  is a \emph{KMS state at inverse temperature $\beta$ for $\alpha$}, or a \emph{KMS${}_\beta$ state for $\alpha$}, if it satisfies the following \emph{KMS$_\beta$ condition}:
\begin{equation}\label{defKMS}
\phi(dc) = \phi(c\alpha_{i\beta}(d))\ \text{ for all $c,d\in A^{\textnormal{a}}$.}
\end{equation}
In fact, it suffices to check \eqref{defKMS} for a set of analytic elements which spans a dense subspace of $A$ \cite[Proposition~8.12.3]{ped}, and hence this definition agrees with the one used in \cite[\S5.3]{bra-rob}. For $\beta>0$, every KMS${}_\beta$ state is $\alpha$-invariant \cite[Proposition~8.12.4]{ped}; for a state $\phi$ to be a KMS${}_0$ state, it is standard to require that $\phi$ satisfies \eqref{defKMS} with $\beta=0$ (so that $\phi$ is a trace), and that $\phi$ is $\alpha$-invariant.

For every system $(A,\R,\alpha)$, the set $K_\beta$ of KMS$_\beta$ states is a compact convex subset of the state space $S(A)$. The affine structure of the set $K_\beta$ is studied in \cite[\S5.2.3]{bra-rob}: it is always a simplex in the sense of Choquet, and the extremal KMS$_\beta$ states (that is, the extreme points of $K_\beta$) are always factor states. The same section in \cite{bra-rob} also discusses the relationship between KMS states and equilibrium states in models from quantum statistical mechanics.

For $\beta = \infty$ there are two possible notions of equilibrium. Following
Connes and Marcolli \cite[Definition 3.7]{CM2}, we distinguish between 
 \emph{KMS$_\infty$ states},
which are by definition the weak* limits of nets $\phi_i$ of KMS$_{\beta_i}$ states with $\beta_i\to\infty$, and \emph{ground states}, 
which are by definition states $\phi$ for which the  entire functions 
\[
z \mapsto \phi(d\alpha_z(c))\ \text{ for $c,d\in A^{\textnormal{a}}$}
\]
are bounded on the upper half-plane. With this distinction in mind,  \cite[Proposition~5.3.23]{bra-rob} and \cite[Proposition 3.8]{CM2}
 imply that the KMS$_\infty$ states form a compact convex subset of the ground states. As observed by Connes and Marcolli \cite[page 447]{CM2}, ground states need not be KMS$_\infty$
states, and our system provides another example of this phenomenon, see parts (3) and (4) of
Theorem 7.1 below. We point out that this relatively recent distinction
was not made in  \cite{bra-rob} or \cite{ped}, where the terms ``ground state'' and ``KMS${}_\infty$ state'' are used interchangeably to refer to the ground states of \cite[Definition 3.7]{CM2}.
 The definition of ground state in \cite{ped} looks different: there it is required that the functions $z \mapsto \phi(d\alpha_z(c))$ are bounded by $\|c\|\,\|d\|$. However, as pointed out in the proof of \cite[Proposition~5.3.19, $(2)\Longrightarrow(5)$]{bra-rob}, a variant\footnote{One suitable variant is formulated as Exercise~9 on page 264 of \cite{rud}.} of the Phragmen-Lindel\"of theorem implies that an entire function which is bounded on the upper half-plane is bounded by the sup-norm of its restriction to the real axis, which in this case is at most $\|c\|\,\|d\|$. It follows from the definition in \cite{ped} that it suffices to check boundedness for a set of elements which spans a dense subspace of $A^{\textnormal{a}}$.

For our system $(\TT(\nxnx),\R,\sigma)$, the spanning elements $s^mv_av_b^*v^{*n}$ for $\TT(\nxnx)$ satisfy 
\[
\sigma_t(s^mv_av_b^*s^{*n})=(ab^{-1})^{it}s^mv_av_b^*s^{*n},
\]
and hence are all analytic. Thus to see that a state $\phi$ of $\TT(\nxnx)$  is a KMS${}_\beta$ state or ground state for $\sigma$, it suffices to check the appropriate condition for $c$ and $d$ of the form $s^mv_av_b^*s^{*n}$.   

We can now state our main theorem. The function $\zeta$ appearing in the formulas is the Riemann zeta-function, defined for $r>1$ by $\zeta(r)=\sum_{n=1}^\infty n^{-r}$.

\begin{theorem}\label{maintheorem}
Let $\sigma$ be the dynamics on $\TT(\nxnx)$ which satisfies \eqref{defsigma}. 
\begin{enumerate}
\item For $\beta\in [0,1)$ there are no KMS$_\beta$ states for $\sigma$.

\smallskip
\item For $\beta \in [1,2]$ there is a unique KMS$_\beta$ state $\psi_\beta$ for $\sigma$, and it is characterised by \[\psi_\beta (s^mv_av_b^*s^{*n}) = \begin{cases} 0 & \text{ if } a \neq b\text{ or } m \neq n\\
a^{-\beta} &\text{ if } a = b \text{ and } m=n.
\end{cases}
\]  

\smallskip
\item For $\beta \in (2,\infty]$, the simplex of KMS$_\beta$ states for $\sigma$ is isomorphic to the simplex of probability measures on $\T$; for $z\in\T$, the extremal KMS$_\beta$ state $\psi_{\beta,\newz}$ corresponding to the point mass $\delta_z$ is a type I factor state satisfying
\[
 \psi_{\beta, \newz}(s^m v_a v_b^* s^{*n}) = \begin{cases}  \displaystyle 0 & \text{ if } a \neq b  \text{ or } m \not\equiv n \pmod a,\\ \displaystyle
 \frac{1}{a \zeta(\beta -1)} \sum_{\{x\,:\,a \mid x \mid (m-n)\}} x^{1-\beta} z^{({m-n})/{x}} & \text { if }  a = b \text{ and } m \equiv n    \pmod  a.
\end{cases}
\]

\item If $\psi$ is a ground state for $\sigma$, then the restriction $\omega:=\psi|_{C^*(s)}$ is a state of $C^*(s)\cong \TT(\N)$, and we have
\begin{equation}\label{formground}
\psi(s^m v_a v_b^* s^{*n}) = 
\begin{cases} 0 & \text{ unless } a = b = 1\\
\omega(s^ms^{*n}) & \text{ when } a = b = 1.
\end{cases}
\end{equation}
The map $\psi \mapsto \psi|_{C^*(s)}$ is an affine isomorphism of the compact convex set of ground states onto the state space of $\TT(\N)$, and a state $\psi$ is an extremal ground state if and only if $\psi|_{C^*(s)}$ is either a vector state of $\TT(\N)$ or is lifted from an evaluation map on the quotient $C(\T)$ of $\TT(\N)$.
\end{enumerate}
\end{theorem}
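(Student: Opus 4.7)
The plan is to attack the four parts in the order they appear, mirroring the paper's tripartite division into characterization, construction, and surjectivity.

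Part (1) follows immediately from Nica covariance. Relation (T5) makes $\{s^k v_p v_p^* s^{*k} : 0 \leq k < p\}$ mutually orthogonal, and the KMS$_\beta$ condition with $\sigma_{i\beta}(s^k v_p) = p^{-\beta} s^k v_p$ gives $\phi(s^k v_p v_p^* s^{*k}) = p^{-\beta}$. Summing over $k$ yields $p^{1-\beta} \leq 1$, forcing $\beta \geq 1$. For the characterization at arbitrary $\beta \geq 1$, \thmref{toeplitzpresentation} together with \lemref{covarianceonngenerators} shows that $\TT(\nxnx)$ is densely spanned by the monomials $s^m v_a v_b^* s^{*n}$. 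Running the KMS identity with $x = v_a$, $y = v_b^*$ gives $(a^{-\beta} - b^{-\beta})\phi(v_b^* v_a) = 0$, so $\phi$ annihilates $v_a v_b^*$ when $a \neq b$; combined with \eqref{formofNica}, this vanishing extends to all $s^m v_a v_b^* s^{*n}$ with $a \neq b$, and reduces $\phi$ to $\phi|_{C^*(s)}$ weighted by the factors $\phi(v_a v_a^*) = a^{-\beta}$.

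For parts (2) and (3), construction of KMS$_\beta$ states for $\beta > 2$ proceeds spatially. For each $z \in \T$ one builds a representation of $\TT(\nxnx)$ on a suitable Hilbert space where the generator $H$ of $\sigma$ has finite trace at $\beta > 2$, with partition function $\zeta(\beta-1) = \sum_{a \in \nx} a \cdot a^{-\beta}$ (the factor of $a$ accounting for the $a$ additive residues mod $a$ carried at each multiplicative level). The Gibbs functional $x \mapsto \tr(x\, e^{-\beta H})/\zeta(\beta-1)$ is then a KMS$_\beta$ state, and an explicit trace computation produces the divisor sum formula of (3). Averaging over a probability measure on $\T$ gives the general $\psi_{\beta,\mu}$. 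Uniqueness for $\beta \in [1,2]$ (part (2)) uses the reconstruction formula of \lemref{phirestrictedtoQB}, inspired by Neshveyev's ergodicity argument: $\phi|_{C^*(s)}$ is expressed as an integral against a probability measure on $\T$ whose moments are constrained by $\beta$, and for $\beta \leq 2$ only the uniform measure survives, forcing $\phi(s^m s^{*n}) = \delta_{m,n}$. Surjectivity in part (3) matches characterization to construction using the product decomposition of \proref{prodstructure}.

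For part (4) on ground states, the boundedness condition kills all but the additive part directly. Taking $c = v_b^* s^{*n}$, $d = s^m v_a$ gives the entire function $z \mapsto b^{-iz} \phi(s^m v_a v_b^* s^{*n})$; since $|b^{-iz}| = b^{\operatorname{Im} z}$ is unbounded on the upper half-plane when $b > 1$, the coefficient vanishes. The same argument applied to the adjoint monomial $s^n v_b v_a^* s^{*m}$ (with $c = v_a^* s^{*m}$, $d = s^n v_b$) forces vanishing when $a > 1$, and together $\phi(s^m v_a v_b^* s^{*n}) = 0$ unless $a = b = 1$. Thus $\phi$ is determined by its restriction $\omega := \phi|_{C^*(s)}$ via the formula \eqref{formground}. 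Conversely, given any state $\omega$ of $\TT(\N)$, the formula defines a state of $\TT(\nxnx)$, and boundedness on analytic spanning monomials is routine, giving the affine isomorphism. The extremal ground states correspond to pure states of $\TT(\N)$, and by the short exact sequence $0 \to \K \to \TT(\N) \to C(\T) \to 0$ these split into vector states of the identity representation on $\ell^2(\N)$ and point-evaluations pulled back from the quotient $C(\T)$.

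The hardest step is expected to be the uniqueness assertion in part (2): the characterization leaves open in principle a one-parameter family of KMS$_\beta$ states indexed by measures on $\T$, and collapsing this to a single state for $\beta \in [1,2]$ requires the Neshveyev-type reconstruction formula together with careful handling of $\zeta(\beta - 1)$ at the critical edge $\beta = 2$ where the partition function diverges. Parts (3) and (4) are comparatively soft once the spatial model of the construction and the ideal structure of $\TT(\N)$ are in hand.
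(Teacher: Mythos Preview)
Your outline tracks the paper's tripartite structure (characterization/construction/surjectivity) and gets several pieces right, but there are genuine gaps and misattributions in the middle sections.

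\textbf{Existence for $\beta\in[1,2]$.} You only describe the Gibbs construction, which requires $\zeta(\beta-1)<\infty$ and so only works for $\beta>2$. For $\beta\in[1,2]$ the paper builds $\psi_\beta$ differently: it puts a product measure $\mu_\beta=\prod_p\mu_{\beta,p}$ on the additive boundary $\Omega_B\cong\prod_p X_p$ (this is where \proref{prodstructure} is actually used), and composes with the expectation $E_{\qxqx}$ of the dual coaction. A direct computation then gives $\psi_\beta(s^m v_a v_a^* s^{*m})=a^{-\beta}$. You have not indicated any mechanism for producing a KMS state in this range.

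\textbf{Uniqueness for $\beta\in(1,2]$.} Your description (``only the uniform measure survives'') misses the actual mechanism. The paper does not argue via constraints on moments of a measure; instead, \lemref{phirestrictedtoQB} is applied with \emph{finite} sets $E\subset\primes$, giving
\[
\phi(s^n)=\frac{1}{\zeta_E(\beta-1)}\sum_{a\in\nx_E,\ a\mid n}a^{1-\beta}\phi_{Q_E}(s^{n/a}).
\]
For fixed $n$ the sum has boundedly many terms, each of modulus at most $1$, while $\zeta_E(\beta-1)\to\infty$ as $E\uparrow\primes$ precisely because $\beta\le 2$. Hence $\phi(s^n)=0$ for all $n\ne 0$, and \lemref{KMScharacterisationlemma} forces $\phi=\psi_\beta$. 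There is no edge analysis at $\beta=2$; the argument is uniform on $(1,2]$. The case $\beta=1$ is handled separately: one shows (Lemma~\ref{KMSfactors}) that any KMS$_1$ state factors through the boundary quotient $\qn$, and then invokes Cuntz's uniqueness theorem there.

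\textbf{Surjectivity for $\beta>2$.} You attribute this to the product decomposition, but the paper instead uses the reconstruction formula again, now with $E=\primes$ (legitimate since $\zeta(\beta-1)<\infty$). The conditional state $\phi_Q$ restricted to $C^*(s)$ factors through $C(\T)$ and so is given by a measure $\mu_\phi$; plugging \eqref{phiQ} into the reconstruction formula reproduces exactly the divisor-sum formula for $\psi_{\beta,\mu_\phi}$, whence $\phi=\psi_{\beta,\mu_\phi}$.

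\textbf{Smaller points.} The converse direction of \lemref{KMScharacterisationlemma} (that the formula \emph{implies} KMS) is a substantial Nica-covariance computation that you have not addressed. In part~(4), asserting that ``the formula defines a state'' is not automatic; the paper instead builds an explicit representation on $\ell^2(X,\HH_\omega)$ (Lemma~\ref{constructreps}) and takes a vector state, which guarantees positivity.
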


We will prove these assertions in the next three sections. Before we start, though, we make some 
remarks on the significance of the theorem for symmetries and equilibrium.

\begin{remarks} (i) Although
a KMS$_\beta$ state $\psi$ (and in fact any state of $\TT(\nxnx)$) is uniquely determined by its value on products of the form
$s^{m} v_av_b^* s^{*n}$,  it is not obvious that there are states satisfying the formulas in parts (2), (3), and (4). We will prove existence of such states in \S\ref{secconstructionKMSground} using spatial constructions.

\smallskip
(ii) The $C^*$-algebra $\TT(\nxnx)$ carries a dual action $\hat\tau$ of $({\qx})^\wedge$, which is characterised on generators by $\hat\tau_\gamma(s)=s$ and $\hat\tau_\gamma(v_p)=\gamma(p)v_p$, and a dual coaction $\delta$ of $\qxqx$ (see \cite[Proposition~6.1]{quasilat}). It may help coaction fans to observe that $\hat\tau$ is the action of $({\qx})^\wedge$ corresponding to the restriction $\delta|$ of the coaction $\delta$ to the quotient $\qx=(\qxqx)/\Q$. The coaction $\delta$ gives an expectation $E_\qxqx$ onto the fixed-point algebra $\clsp\{s^{m} v_av_a^* s^{*m}:m \in \N, a \in \nx\}$, which is faithful because $\qxqx$ is amenable (see \cite[Lemma~6.5]{quasilat}), and $\hat\tau$ gives a faithful expectation $E_{\hat\tau}$ onto 
\[
\TT(\nxnx)^{\hat\tau}=\clsp\{s^{m} v_av_a^* s^{*n}: m, n \in \N, a \in \nx\}.
\]
 The dynamics $\sigma$ is the composition of $\hat\tau$ with the embedding $t\mapsto (\gamma_t:r\mapsto r^{it})$ of $\R$ as a dense subgroup of $({\qx})^\wedge$. So $\TT(\nxnx)^{\hat\tau}=\TT(\nxnx)^{\sigma}$, and $E_{\hat\tau}$ is also the expectation onto the fixed-point algebra for $\sigma$.

\smallskip
(iii) It follows from \thmref{maintheorem}
that  KMS$_\beta$ states vanish on the products $s^mv_av_b^*s^{*n}$ with $a \neq  b$, and hence factor through
the conditional expectation $E_{\hat\tau}$ of the dual action of $({\qx})^\wedge$; for $\beta\in [1,2]$ they  also vanish on the products 
$s^{m} v_av_a^* s^{*n}$ with $m \neq n$,  and hence
factor  through the conditional expectation $E_\qxqx$ of the dual coaction of $\qxqx$. 
Hence, for small $\beta$, the equilibrium states are symmetric with respect to the coaction of $\qxqx$
but for $\beta > 2$ they are symmetric only with respect to the (quotient) coaction of $\qx$. 
Since the extreme states in part (3) are indexed by the circle,
there is a circular symmetry at the level of KMS states which is broken  as $\beta$ increases through~$2$.

\smallskip
(iv) The relation (T1) makes it unlikely for there to be an action of the 
Pontryagin dual $\T$ of $\Z$ on $\TT(\nxnx)$ that sends $s\mapsto zs$ for $z\in \T$, and certainly not one which has any $v_p$ as an eigenvector. Thus the symmetry which is apparently being broken as $\beta$ passes from $2^-$ to $2^+$ in \thmref{maintheorem} does not obviously come from a group action on the $C^*$-algebra $\TT(\nxnx)$.

\smallskip
(v) There is a further phase transition ``at infinity": the KMS$_\infty$ states form a proper subset of the ground states. Indeed, it follows from the formula in (3) that every KMS${}_\infty$ state satisfies $\psi(ss^*)=1$, and hence the extremal KMS$_\infty$ states are the ground states such that $\psi|_{C^*(s)}$ is lifted from an evaluation map on $C(\T)$. Notice also that the existence of the affine isomorphism in (4) implies that the ground states of $\TT(\nxnx)$ do not form a simplex, because the state space of the noncommutative subalgebra $C^*(s)\cong\TT(\N)$ is not a simplex (see, for example, \cite[Example 4.2.6]{bra-rob}).

\smallskip
(vi) (The partition function.) The extremal KMS$_\beta$ states (for $\beta>2$) are related to the KMS$_{\infty}$ states in the following way. Since each extremal KMS$_{\infty}$ state $\phi$ is $\R$-invariant, the dynamics is implemented in the GNS-representation $(\HH_\phi,\pi_\phi,\xi_\phi)$ by a unitary group $U:\R\to U(\HH_\phi)$. The \emph{Liouville operator} is the infinitesimal generator $H$ of this one-parameter group, which is an unbounded self-adjoint operator on $\HH_\phi$. The functionals 
\[
\phi_\beta:T\mapsto \frac{\tr (e^{-\beta H}T)}{\tr e^{-\beta H}}
\]
are then the extremal KMS$_\beta$ states; the normalising factor $\beta\mapsto \tr e^{-\beta H}$ is called the \emph{partition function} of the system. On the face of it, the partition function will depend on the choice of KMS$_{\infty}$ state $\phi$, but in these number-theoretic systems it doesn't seem to. In the Bost-Connes system, for example, there is a large symmetry group of the underlying $C^*$-algebra which commutes with the dynamics and acts transitively on the extreme KMS$_\infty$ states, and all the Liouville operators in the associated GNS representations match up (see \cite[\S6]{bos-con}). The same thing happens for similar systems over more general number fields (see \cite[Remark~3.5]{LvF}). Here, even though there is no obvious symmetry group of $\TT(\nxnx)$ which implements the circular symmetry of the simplex of KMS$_\infty$ states, the GNS representations of the extreme KMS$_\infty$ states $\psi_{\infty,z}$ are all realisable on the same space $\ell^2(X)$, with the same cyclic vector $e_{0,1}$, the same unitary group implementing the dynamics, and the same Liouville operator (see the discussion at the start of the proof of Proposition~\ref{constructKMS>2}). That discussion shows also that the eigenvalues of $H$ are the numbers $\ln x$ for $x\in \nx$, and that the multiplicity of the eigenvalue $\ln x$ is $x$, so that $\tr e^{-\beta H}=\zeta(\beta-1)$. So it makes sense for us to claim that: ``The partition function of the system $(\TT(\nxnx),\R,\sigma)$ is $\zeta(\beta-1)$.''
\end{remarks}

%%%%%%%%%%%%%%%%%%%%%%%%%%%%%%%%%%%%%%%%%%%%
\section{Characterisation of KMS and ground states of the system}\label{seccharacterisationKMSground}
%%%%%%%%%%%%%%%%%%%%%%%%%%%%%%%%%%%%%%%%%%%%%%
 We begin with the case $\beta <1$. 

\begin{proposition}[\thmref{maintheorem}(1)]\label{noequilibriumatlowbeta}
The system $(\TT(\nxnx),\R, \sigma)$ has
no KMS$_\beta$ states for $\beta<1$.
\end{proposition}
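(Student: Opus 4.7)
The plan is to derive a contradiction by applying the KMS condition to the isometry $s^k v_p$ and exploiting the Nica-covariant orthogonality of the range projections $s^k v_p v_p^* s^{*k}$ for $0\leq k<p$.

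First I would observe that for any prime $p$ and $0\le k<p$, the element $s^k v_p$ is analytic for $\sigma$, with $\sigma_{i\beta}(s^kv_p)=p^{-\beta}s^kv_p$. Applying the KMS$_\beta$ condition with $d=s^kv_p$ and $c=v_p^*s^{*k}$ gives
\[
\phi(s^k v_p v_p^* s^{*k}) \;=\; \phi\bigl(v_p^* s^{*k}\sigma_{i\beta}(s^kv_p)\bigr)\;=\;p^{-\beta}\phi(v_p^*v_p)\;=\;p^{-\beta},
\]
since $v_p$ is an isometry.

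Next I would use Nica covariance to show that the projections $\{s^kv_pv_p^*s^{*k}:0\le k<p\}$ are pairwise orthogonal. This is exactly the content of relation (T5): for $0<k<p$ we have $v_p^* s^k v_p=0$, so $(s^kv_p)^*(s^l v_p)=v_p^* s^{l-k} v_p=0$ whenever $0<l-k<p$. Consequently
\[
\sum_{k=0}^{p-1} s^k v_p v_p^* s^{*k}\;\le\; 1_{\TT(\nxnx)}.
\]
Applying $\phi$ to this inequality, together with the previous computation, yields
\[
p^{1-\beta}\;=\;\sum_{k=0}^{p-1}\phi(s^k v_p v_p^* s^{*k})\;\le\;\phi(1)\;=\;1.
\]
For any prime $p\ge 2$ this forces $\beta\ge 1$, contradicting the assumption $\beta<1$.

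There is no real obstacle here: the argument is a one-line application of the KMS condition once one has Nica covariance and the analyticity of the generators, both of which are already in hand. The only minor point to be careful about is that although we have not yet shown that every KMS$_\beta$ state is determined by its values on the spanning monomials (that comes in Lemma \ref{KMScharacterisationlemma}), the argument above only uses the KMS identity on a single pair of analytic elements, which is justified by \cite[Proposition~8.12.3]{ped} applied to the dense $*$-subalgebra of analytic elements.
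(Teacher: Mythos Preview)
Your argument is correct and essentially identical to the paper's: both apply the KMS$_\beta$ condition to obtain $\phi(s^kv_av_a^*s^{*k})=a^{-\beta}$, use (T5) to see that the range projections $\{s^kv_av_a^*s^{*k}:0\le k<a\}$ are mutually orthogonal, and conclude $1\ge a\cdot a^{-\beta}$, forcing $\beta\ge 1$. The only cosmetic difference is that the paper works with a general $a\in\nx$ while you take $a=p$ prime, which already suffices.
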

\begin{proof}
(Notice that our argument also rules out the existence of KMS$_\beta$ states for $\beta<0$.)
Suppose $\psi$ is a KMS$_\beta$ state for $\sigma$. Then the KMS$_\beta$ condition implies that, for $a\in\nx$ and $0\leq k<a$, we have
\[
\psi (s^{k} v_av_a^* s^{*k}) =\psi (v_a^* s^{*k}\sigma_{i\beta}(s^{k} v_a))=\psi (v_a^* s^{*k}a^{-\beta}s^{k} v_a)=a^{-\beta}\psi(1)=a^{-\beta}.
\]
The relation (T5) (or strictly speaking, (T5') in Lemma~\ref{relsata}) implies that the projections $ s^{k} v_av_a^* s^{*k}$ for  $0 \leq k <a$ are mutually orthogonal, and hence $1 \geq \sum_{k=0} ^{a-1} s^{k} v_av_a^* s^{*k}$. Now positivity of $\psi$ implies that
\[
1=\psi(1)\geq \psi \Big(\sum_{k=0} ^{a-1} s^{k} v_av_a^* s^{*k}\Big)=aa^{-\beta},
\]
which implies $\beta\geq 1$.
\end{proof}

For $\beta \geq 1$ we need the characterisation of the KMS$_\beta$ states in Lemma~\ref{KMScharacterisationlemma}. Here and later we use the following notational convention to simplify formulas. 
\begin{convention}\label{convention(())}
 We write $s^{((k))}$
to mean $s^k$ when $k\geq 0$ and $s^{*(-k)}$ when $k<0$.
\end{convention}

\begin{lemma}\label{KMScharacterisationlemma}
Let $\beta \in [1,\infty)$. A state $\phi$  of $\TT(\nxnx)$ is
 a KMS$_\beta$ state for $\sigma$ if and only if for  every $a, b \in \nx$ and $m,n \in \N$ we have 
\begin{equation}\label{KMScharacterisation}
\phi(s^m v_a v_b^* s^{*n}) = \begin{cases}  0 & \text{ if } a\neq b  \text{ or } m \not\equiv n \pmod a\\
 a^{-\beta} \phi\big(s^{((\frac{m-n}{a}))}\big) & \text { if }  a = b \text{ and } n \equiv m    \pmod  a . \end{cases}
\end{equation}
\end{lemma}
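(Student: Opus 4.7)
The plan is to reduce both directions to the Nica-covariance formula \eqref{formofNica} from Lemma~\ref{covarianceonngenerators}, using $\sigma$-invariance for the forward implication and an explicit product computation for the converse.

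For the forward direction, let $\phi$ be a KMS$_\beta$ state. Setting $c = s^m v_a$ and $d = v_b^* s^{*n}$, one has $\sigma_{i\beta}(c) = a^{-\beta}c$, so the KMS$_\beta$ condition yields
\[
\phi(s^m v_a v_b^* s^{*n}) = \phi(d\,\sigma_{i\beta}(c)) = a^{-\beta}\phi(v_b^* s^{*n} s^m v_a).
\]
When $a = b$, formula \eqref{formofNica} (with $\gcd(a,a)=a$ and $a'=b'=1$) says that $v_a^* s^{*n} s^m v_a$ vanishes unless $m\equiv n \pmod a$, in which case it equals $s^{((\frac{m-n}{a}))}$; this matches the second clause of \eqref{KMScharacterisation}. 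When $a\neq b$, the monomial $s^m v_a v_b^* s^{*n}$ is an eigenvector of $\sigma$ with non-trivial eigenvalue $(a/b)^{it}$, and since KMS$_\beta$ states with $\beta\geq 1$ are $\sigma$-invariant, $\phi$ must vanish on it.

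For the converse, suppose $\phi$ satisfies \eqref{KMScharacterisation}. The standard-form monomials are $\sigma$-analytic and span $\TT(\nxnx)$, so it suffices to check $\phi(xy) = \phi(y\,\sigma_{i\beta}(x)) = (a/b)^{-\beta}\phi(yx)$ for $x = s^m v_a v_b^* s^{*n}$ and $y = s^{m'}v_{a'}v_{b'}^* s^{*n'}$. Applying \eqref{formofNica} to the inner factor $v_b^* s^{*n} s^{m'} v_{a'}$ (with $d=\gcd(b,a')$, $\tilde a=a'/d$, $\tilde b=b/d$, and $(\mu,\nu)$ the minimal non-negative solution of $(m'-n)/d=\mu\tilde b-\nu\tilde a$) and then using \textnormal{(T1')} to commute $s$-powers past the $v$'s, $xy$ rewrites as $s^K v_A v_B^* s^{*L}$ with $A=aa'/d$, $B=bb'/d$, $K=m+a\mu$, $L=b'\nu+n'$. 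A symmetric computation yields $yx = s^{K'}v_{A'}v_{B'}^* s^{*L'}$ with $A'=aa'/d'$, $B'=bb'/d'$ for $d'=\gcd(b',a)$. By \eqref{KMScharacterisation} both $\phi(xy)$ and $\phi(yx)$ vanish unless $aa'=bb'$, in which case the KMS identity becomes a genuine constraint.

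When $aa'=bb'$, write $a/b = p/q$ in lowest terms; one checks $A/A' = d'/d = p/q$, and substituting the defining relations for $\mu,\nu,\mu',\nu'$ into the formulas for $K-L$ and $K'-L'$ yields the arithmetic identity $q(K-L)=p(K'-L')$, so $(K-L)/A = (K'-L')/A'$ as integers. Hence
\[
\phi(xy) = A^{-\beta}\phi\!\big(s^{((\frac{K-L}{A}))}\big) = (a/b)^{-\beta}(A')^{-\beta}\phi\!\big(s^{((\frac{K'-L'}{A'}))}\big) = (a/b)^{-\beta}\phi(yx),
\]
which is the required KMS$_\beta$ identity. The main obstacle is the bookkeeping at the very end: one must verify that the divisibility conditions forcing $\phi(xy)$ or $\phi(yx)$ to vanish actually coincide, so that one side does not accidentally become zero while the other remains non-zero. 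Both conditions reduce to the simultaneous congruences $m\equiv n' \pmod{\gcd(b',a)}$ and $m'\equiv n \pmod{\gcd(b,a')}$, so the verification goes through.
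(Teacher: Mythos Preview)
Your argument is correct and mirrors the paper's proof: the forward direction is the same computation (your appeal to $\sigma$-invariance when $a\neq b$ is equivalent to the paper's ``apply KMS twice'' trick), and the converse is exactly the paper's reduction of $xy$ and $yx$ to standard monomials via \eqref{formofNica}, followed by the arithmetic identity $(K-L)/A=(K'-L')/A'$. One small imprecision: the non-vanishing conditions do not reduce merely to the two congruences $m\equiv n'\pmod{d'}$ and $m'\equiv n\pmod{d}$ --- one also needs $A\mid K-L$ (equivalently $A'\mid K'-L'$) --- but since you have already established $(K-L)/A=(K'-L')/A'$ and each congruence is implied by the other side's divisibility condition, the equivalence of the vanishing conditions follows, so the argument goes through.
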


\begin{proof}
Suppose first that $\phi$ is a KMS$_\beta$ state. 
Applying the KMS condition twice gives 
\[
\phi(s^m v_a v_b^* s^{*n}) = a^{-\beta} \phi( v_b^* s^{*n} s^m v_a) = (a/b)^{-\beta} \phi(s^m v_a v_b^* s^{*n}),
\]
which implies that 
\begin{equation}\label{intermedcalc}
\phi (s^mv_av_b^*s^{*n}) = 
\begin{cases}0&\text{ if $a\neq b$}\\
a^{-\beta} \phi (v_a^* s^{((m-n))} v_a )&\text{ if $a= b$.}
\end{cases} 
\end{equation}
When  $m\not\equiv n \pmod a$, the relation (T5) implies that $v_a^* s^{((m-n))} v_a=0$, 
and  when  $m \equiv n \pmod a$, relation (T1) implies that $v_a^* s^{((m-n))} v_a =s^{(((m-n)/a))}$. Thus \eqref{intermedcalc} says that $\phi$ satisfies \eqref{KMScharacterisation}.

Suppose now that $\phi$ satisfies \eqref{KMScharacterisation}. Since it suffices to check the KMS condition 
\eqref{defKMS} on spanning elements, $\phi$ is a KMS${}_\beta$ state for $\sigma$ and if and only if 
\begin{equation}\label{KMSanalytic}
{a}^{\beta}\phi(s^m v_av_b^* s^{*n} \ s^q v_c v_d^* s^{*r} ) =  
{b}^{\beta} \phi( s^q v_c v_d^* s^{*r}\ s^m v_av_b^* s^{*n} )
\end{equation}
for $a, b, c, d \in \nx$ and $m,n,q,r \in \N$. We prove this equality by computing both sides.

To compute the left-hand side of \eqref{KMSanalytic}, we first reduce 
the expression using the covariance relation in \lemref{covarianceonngenerators}:
\begin{align*}
s^m v_av_b^* s^{*n} \ s^q v_c v_d^* s^{*r}  &=
	 s^m v_av_b^* s^{((q-n))} v_cv_d^*s^{*r} \\
	 &=   \begin{cases}
0 & \text{ if } q\not\equiv n \pmod {\gcd(b,c)} \\
s^m v_a    ( s^\beta    v_{c'} v^*_{b'}    s^{*\gamma} )   v_d^* s^{*r}   & \text{ if } q \equiv n \pmod {\gcd(b,c)} 
\end{cases} 
\\
	 &=   \begin{cases}
0 & \text{ if } q\not\equiv n \pmod {\gcd(b,c)} \\
s^{m+\beta a} v_{ac'} v^*_{db'}  s^{*r+\gamma d}  & \text{ if } q \equiv n \pmod {\gcd(b,c)} ,
\end{cases} 
	\end{align*} 
where $b' = b/\gcd(b,c)$,  $c' = c/\gcd(b,c)$, and $( \beta, \gamma)$ is the smallest non-negative solution of $(q-n)/{\gcd(b,c)} = \beta b' - \gamma c'$.  
Now \eqref{KMScharacterisation} implies that the left-hand side of \eqref{KMSanalytic} vanishes unless 
$ q \equiv n \pmod{\gcd(b,c)}$, $ac' = db'$,  and $m+\beta a\equiv r + \gamma d \pmod {ac'}$, 
in which case it equals
\begin{equation}\label{lhsofkms}
(c')^{-\beta}\phi\big(  s^{((\frac{m+\beta a-r-\gamma d}{ac'}))}\big) .
\end{equation}

The  analogous computation shows that the right-hand side of
 \eqref{KMSanalytic} vanishes unless
$ m \equiv r \pmod {\gcd(d,a)}$ and $ca' = bd'$. If so, we take $(\delta, \alpha)$ to be the smallest non-negative solution of $(m-r)/\gcd(d,a) = \delta d' - \alpha a'$. Now the right-hand side of \eqref{KMSanalytic} vanishes unless $q + \delta c\equiv n + \alpha b\pmod {bd'}$, and then equals
\begin{equation}\label{rhsofkms}
(d')^{-\beta}\phi(  s^{((\frac{q + \delta c- n - \alpha b}{bd'}))} ).
\end{equation}

We need to verify that the conditions for a nonvanishing  left-hand side 
match those for the right-hand side,
and that when they hold, the values of \eqref{lhsofkms} and \eqref{rhsofkms} coincide. The situation is symmetric, so we suppose that $q \equiv n \pmod {\gcd(b,c)}$, that $ac' = db'$, and that, with $(\beta, \gamma)$ as defined two paragraphs above, $m+\beta a\equiv r + \gamma d \pmod {ac'}$.

Notice that 
\[
ac' = db' \iff a/d = b'/c' \iff a'/d' = b'/c' \iff a'/d' = b/c \iff ca' = bd'; 
\]
these are all equivalent to $ac = bd$, and from the reduced form in the middle we deduce that $a' = b'$
and $c' = d'$. This implies in particular that the coefficients $(c')^{-\beta}$ in \eqref{lhsofkms} and $(d')^{-\beta}$ in \eqref{rhsofkms} coincide.
Next, notice that $m+\beta a\equiv r + \gamma d \pmod {ac'}$ implies that $m\equiv r \pmod {\gcd(d,a)}$, so it makes sense to take $(\delta, \alpha)$ to be the smallest non-negative solution of $(m-r)/{\gcd(d,a)} = \delta d' - \alpha a'$.

Consider now the exponent of $s$ on the left-hand side of \eqref{KMSanalytic}. The definition of $(\delta,\alpha)$ implies that $m-r=\delta d - \alpha a$, so, remembering that $a' = b'$ and $c' = d'$, we have
\begin{align*}
\frac{m+\beta a-r-\gamma d}{ac'} 
&= \frac{ (\delta - \gamma) d + (\beta - \alpha) a}{\gcd(d,a)a'c'}
=\frac{ (\delta - \gamma) d' + (\beta - \alpha) a'}{a'c'}\\
&=\frac{ (\delta - \gamma) c' + (\beta - \alpha) b'}{b'd'}
=\frac{ \beta b - \gamma c + \delta c- \alpha b}{\gcd(b,c)b'd'}\\
&= \frac{q-n+\delta c- b\alpha}{bd'},
\end{align*}
which is the exponent of $s$ on the right-hand side of \eqref{KMSanalytic}.
Since $ac'$ divides $m+\beta a -r-\gamma d$, this calculation also shows that
$b'd$ divides $q + \delta c- n - \alpha b$, or equivalently that $q + \delta c\equiv n + \alpha b\pmod {bd'}$. This completes the proof of \eqref{KMSanalytic}, and hence we have shown that $\phi$ is a KMS$_\beta$ state.
\end{proof}

\begin{lemma}\label{lemmagroundcharacterisation}
A state $\phi$ of $\TT(\nxnx)$ is a ground state for $\sigma$ if and only if 
\begin{equation}\label{groundcharacterisation}
\phi( s^m v_a v_b^* s^{*n}) = 0 \ \text{ whenever $a\not=1$ or $b\not=1$. }
\end{equation}
\end{lemma}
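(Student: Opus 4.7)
The proof reduces to the spanning elements $c=s^{m}v_{a}v_{b}^{*}s^{*n}$, which are eigenvectors of $\sigma$ with $\sigma_{z}(c)=(a/b)^{iz}c$ for all $z\in\C$. Since these span a dense $*$-subalgebra of the analytic elements of $\TT(\nxnx)$, by the remark preceding \thmref{maintheorem} it suffices to verify the ground-state boundedness condition for pairs $c,d$ of this form.

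\emph{Necessity.} Suppose $\phi$ is a ground state and $c=s^{m}v_{a}v_{b}^{*}s^{*n}$ with $(a,b)\neq(1,1)$. If $a<b$, take $d=1$; then $|\phi(d\sigma_{z}(c))|=(a/b)^{-\operatorname{Im}z}\,|\phi(c)|$ blows up as $\operatorname{Im}z\to\infty$, forcing $\phi(c)=0$. If $a>b$, apply the previous case to the adjoint $c^{*}=s^{n}v_{b}v_{a}^{*}s^{*m}$, whose eigenvalue is $\log(b/a)<0$; since $\phi(c^{*})=\overline{\phi(c)}$, this gives $\phi(c)=0$. Finally, if $a=b>1$, factor $c=(s^{m}v_{a})(v_{a}^{*}s^{*n})$ and choose the analytic elements $d:=s^{m}v_{a}$ and $c':=v_{a}^{*}s^{*n}$, where $c'$ lies in the eigenspace with negative eigenvalue $-\log a$. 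Then $\phi(d\sigma_{z}(c'))=a^{-iz}\phi(c)$ has modulus $a^{\operatorname{Im}z}|\phi(c)|$, which is bounded on the upper half-plane only if $\phi(c)=0$.

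\emph{Sufficiency.} Suppose $\phi$ satisfies the stated vanishing condition and fix spanning elements $c=s^{m}v_{a}v_{b}^{*}s^{*n}$ and $d=s^{p}v_{q}v_{r}^{*}s^{*\ell}$. Then $\phi(d\sigma_{z}(c))=(a/b)^{iz}\phi(dc)$, so it suffices to locate when $\phi(dc)\neq 0$. To put $dc$ into standard form I use \lemref{covarianceonngenerators} to rewrite $v_{r}^{*}s^{*\ell}s^{m}v_{a}$: either it vanishes (and $\phi(dc)=0$ trivially), or it equals $s^{\alpha}v_{a'}v_{r'}^{*}s^{*\beta}$ with $a'=a/\gcd(a,r)$, $r'=r/\gcd(a,r)$, and $(\alpha,\beta)$ the smallest non-negative solution of $(m-\ell)/\gcd(a,r)=\alpha r'-\beta a'$. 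Pushing $s^{\alpha}$ through $v_{q}$ via (T1') and $s^{*\beta}$ through $v_{b}^{*}$ via the adjoint of (T1') collapses $dc$ to the standard form $s^{p+q\alpha}v_{qa'}v_{br'}^{*}s^{*(n+b\beta)}$. By hypothesis $\phi$ of this vanishes unless $qa'=br'=1$, i.e.\ unless $q=b=1$ and $a=r$. In that exceptional case $(a/b)^{iz}=a^{iz}$ has modulus $a^{-\operatorname{Im}z}\leq 1$, so $|\phi(d\sigma_{z}(c))|$ remains bounded; in every other case $\phi(dc)=0$ and boundedness is automatic.

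The only non-trivial step is the reduction in the sufficiency direction: one must track carefully how $dc$ collapses into standard form and pinpoint the unique configuration $(q,b,a,r)=(1,1,a,a)$ that could possibly survive the hypothesis on $\phi$. Once that computation is in hand, the eigenfunction property $\sigma_{z}(c)=(a/b)^{iz}c$ makes both directions almost automatic.
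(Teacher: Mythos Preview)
Your proof is correct, and the overall strategy (reduce to spanning eigenvectors, then analyze boundedness of $(a/b)^{iz}$) matches the paper's. However, the execution differs in both directions.

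For \emph{necessity}, you split into the three cases $a<b$, $a>b$, and $a=b>1$, choosing a different test element in each. The paper instead observes once that boundedness forces $\phi(s^q v_c v_d^* s^{*r}\, s^m v_a v_b^* s^{*n})=0$ whenever $a<b$, and then specialises to $r=m$, $d=a=1$: the product collapses to $s^q v_c v_b^* s^{*n}$, giving vanishing whenever $b>1$, and adjoints handle $c>1$. This avoids the separate treatment of the diagonal case $a=b>1$.

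For \emph{sufficiency}, you compute the product $dc$ explicitly via \lemref{covarianceonngenerators} and (T1'), reducing it to a single spanning element $s^{p+q\alpha}v_{qa'}v_{br'}^{*}s^{*(n+b\beta)}$, and then read off that $\phi(dc)\neq 0$ forces $q=b=1$ and $a=r$. The paper instead uses Cauchy--Schwarz: writing $X=s^m v_a v_b^* s^{*n}$, one has
\[
|\phi(Y^*\sigma_{z}(X))|^2 \leq (b/a)^{2\operatorname{Im}z}\,\phi(Y^*Y)\,\phi(X^*X),
\]
and $X^*X=s^n v_b v_b^* s^{*n}$, which $\phi$ kills whenever $b\neq 1$; when $b=1$ the factor $(b/a)^{2\operatorname{Im}z}=a^{-2\operatorname{Im}z}$ is bounded. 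This bypasses the product computation entirely.

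Both arguments are valid. Your direct computation extracts more precise information (the exact surviving configuration), while the paper's Cauchy--Schwarz step is shorter and does not require tracking the Euclidean-algorithm parameters $\alpha,\beta,a',r'$.
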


\begin{proof} 
Let $\phi$ be a state of $\mathcal T (\nxnx)$. 
 The expression 
\begin{equation*}
\phi( s^q v_c v_d^* s^{*r} \ \sigma _{\alpha + i\beta}( s^m v_a v_b^* s^{*n})) =  (a/b)^{i\alpha - \beta} \phi( s^q v_c v_d^* s^{*r} \ s^m v_a v_b^* s^{*n}) 
\end{equation*}
is bounded on the upper half plane ($\beta >0$)  if and only if 
\begin{equation}\label{groundcharact-old}
\phi( s^q v_c v_d^* s^{*r} \ s^m v_a v_b^* s^{*n}) = 0 \quad \text{ whenever } a < b .
\end{equation}
Suppose $\phi$ is a ground state and  choose
 $ r = m$ and  $d = a = 1$;
 then \eqref{groundcharact-old} implies $\phi( s^q v_c v_b^* s^{*n}) = 0 $
 for $1 < b$. Taking adjoints gives the same for $1 < c$. This proves \eqref{groundcharacterisation}
 (with $q$ in place of $m$ and $c$ in place of $a$).
  Conversely, suppose  $\phi( s^m v_a v_b^* s^{*n}) = 0 $ whenever $a$ or $b$ is not $1$ and 
choose two analytic elements $X= s^m v_av_b^* s^{*n}$ and $Y$ for $\sigma$; then 
the Cauchy-Schwarz inequality yields
\begin{align*}
| \phi (Y^* \sigma _{\alpha + i\beta}(s^m v_av_b^* s^{*n} ) )|^2
& =
|(a/b)^{i\alpha -\beta} \phi(Y^* s^m v_av_b^* s^{*n} ) |^2 \\
& \leq
(a/b)^{-\beta} \phi(Y^* Y) \phi(s^n v_bv_a^* s^{*m} \  s^m v_av_b^* s^{*n} )\\
& = (b/a)^{\beta} \phi(Y^* Y) \phi(s^n v_b v_b^* s^{*n} ).
\end{align*}
Since the last factor vanishes for $b \neq 1$, the function 
$\alpha + i \beta \mapsto \phi(Y^* \sigma _{\alpha + i\beta}(X))$ is bounded for $\beta >0$, 
so $\phi $ is a ground state.
\end{proof}

%%%%%%%%%%%%%%%%%%%%%%%%%%%%%%%%%%%%
\section{Construction of KMS and ground states}\label{secconstructionKMSground}
%%%%%%%%%%%%%%%%%%%%%%%%%%%%%%%%%%%%

To prove that there exists a KMS$_\beta$ state satisfying the formula in part (2) of \thmref{maintheorem}
we use a product measure arising on the factorization $\Omega_B\cong \prod_{p\in \primes} X_p$ of \proref{prodstructure}.
The construction makes sense for $\beta \geq 1$, but the case $\beta = 1$ requires special consideration.

\begin{proposition}[\thmref{maintheorem}(2): existence of a KMS$_\beta$ state that factors through $E_\qxqx$]
\label{productmeasure} For $k\in \N$ and $r\in \Z/p^k$, let $\delta_{(\oldz,p^k)}$ denote the unit point mass at $B(\oldz,p^k) \in X_p$.
For $\beta >1$, the series 
\[
\mu_{\beta,p}  = (1-p^{1-\beta}) \sum_{(\oldz,p^k)\in X_P} p^{- \beta k} \delta_{(\oldz,p^k)}
\]
defines a Borel probability measure on $X_p$; for $\beta = 1$, we let $\mu_{1,p}$ be the probability measure on $X_p$ coming from
additive Haar measure on $\Z_p$ via the embedding $r\mapsto B(r,p^\infty)$ of $\Z_p$ in $X_p$ (see Lemma~\ref{omegaB}). Let $\mu_\beta$ be the measure on $\Omega_B$ coming from the product measure $\prod_{p\in\primes}\mu_{\beta,p}$ on $\prod_{p\in\primes} X_p$ via the homeomorphism of Proposition~\ref{prodstructure}, let $\mu_{\beta}^*:f\mapsto\int f\,d\mu_{\beta}$ be the associated state on $C(\Omega)$, and view $\mu_\beta^*$ as a state on 
$\TT(\nxnx)^\delta$ using the isomorphism of 
\[
\TT(\nxnx)^\delta=\clsp\{s^mv_av_a^*s^{*m}:(m,a)\in\nxnx\}
\]
onto $C(\Omega)$ which takes $s^mv_av_a^*s^{*m}$ to $1_{m,a}$. Then $\psi_\beta :=\mu_\beta^* \circ E_\qxqx$ is a KMS${}_\beta$ state for $1 \leq \beta \leq \infty$, and it satisfies
\begin{equation}\label{charpsibeta}
\psi_\beta(s^mv_av_b^*s^{*n})=\begin{cases}
0&\text{ unless $a=b$ and $m=n$}\\
a^{-\beta}&\text{ if $a=b$ and $m=n$.}
\end{cases}
\end{equation}
\end{proposition}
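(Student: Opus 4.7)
The plan splits into three parts: establish that each $\mu_{\beta,p}$ is a probability measure, compute $\psi_\beta$ on the spanning monomials to verify \eqref{charpsibeta}, and then invoke Lemma~\ref{KMScharacterisationlemma}.

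For $\beta>1$, the set $X_p$ is countable and $|\Z/p^k|=p^k$, so the total mass of $\mu_{\beta,p}$ is the geometric series $(1-p^{1-\beta})\sum_{k=0}^\infty p^{(1-\beta)k}=1$; for $\beta=1$ we are simply using normalised Haar measure on $\Z_p$, pushed forward to $X_p$ via the embedding $r\mapsto B(r,p^\infty)$ of Lemma~\ref{omegaB}; for $\beta=\infty$ only the term $k=0$ survives in the limit and $\mu_{\infty,p}=\delta_{B(0,1)}$. In each case $\mu_{\beta,p}$ is a probability measure on $X_p$, so $\mu_\beta$ is a probability measure on $\Omega_B$.

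Next I would use that $E_{\qxqx}$ is the expectation for the dual coaction $\delta$ and therefore kills every spanning element $s^m v_a v_b^*s^{*n}$ unless $(m,a)=(n,b)$; in that case it fixes $s^m v_a v_a^*s^{*m}$, which corresponds to the characteristic function $1_{m,a}\in C(\Omega)$. Hence $\psi_\beta(s^m v_a v_b^*s^{*n})=0$ unless $a=b$ and $m=n$, when it equals $\mu_\beta(\{\omega\in\Omega_B:(m,a)\in\omega\})$. Under the homeomorphism of Proposition~\ref{prodstructure}, the condition $(m,a)\in B(r,N)$ factors across primes as $p^{e_p(a)}\mid p^{e_p(N)}$ together with $r(p^{e_p(N)})(p^{e_p(a)})\equiv m\pmod{p^{e_p(a)}}$, so the measure is a product over $p$. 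The factor for $p\nmid a$ is $1$; for $p\mid a$ with $e:=e_p(a)$, the factor for $\beta>1$ collapses via
\[
(1-p^{1-\beta})\sum_{k\geq e}p^{-\beta k}\cdot p^{k-e}=(1-p^{1-\beta})\,p^{-e}\cdot\frac{p^{(1-\beta)e}}{1-p^{1-\beta}}=p^{-\beta e},
\]
for $\beta=1$ it is the Haar measure $p^{-e}$ of the coset $m+p^e\Z_p$, and for $\beta=\infty$ it is $0$ (when $e\geq 1$). The product over $p\mid a$ gives $a^{-\beta}$, establishing \eqref{charpsibeta}.

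Finally, \eqref{charpsibeta} and Lemma~\ref{KMScharacterisationlemma} fit together cleanly. The vanishing cases of \eqref{KMScharacterisation} are immediate from \eqref{charpsibeta}, and applying \eqref{charpsibeta} with $a=b=1$ yields $\psi_\beta(s^{((k))})=0$ for $k\neq 0$. Hence the identity $\psi_\beta(s^m v_a v_a^*s^{*n})=a^{-\beta}\psi_\beta(s^{((\frac{m-n}{a}))})$ for $m\equiv n\pmod a$ reduces either to $a^{-\beta}=a^{-\beta}\cdot 1$ (when $m=n$) or to $0=a^{-\beta}\cdot 0$ (when $m\neq n$), so $\psi_\beta$ is a KMS$_\beta$ state for $\beta\in[1,\infty)$. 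The KMS$_\infty$ case follows either by the same algebraic argument or directly by observing that $\psi_\beta\to\psi_\infty$ weak-$*$ since $a^{-\beta}$ converges pointwise on the spanning monomials. The main obstacle is the per-prime computation above: the factor $(1-p^{1-\beta})$ in the definition of $\mu_{\beta,p}$ is chosen precisely so that the geometric series collapses cleanly and matches the Haar-measure calculation at $\beta=1$, and this matching is what makes the whole construction uniform in $\beta\in[1,\infty]$.
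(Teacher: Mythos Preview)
Your proof is correct and follows essentially the same approach as the paper: verify that each $\mu_{\beta,p}$ is a probability measure, use the expectation $E_{\qxqx}$ to reduce to the diagonal case, compute the per-prime factors via the product decomposition of Proposition~\ref{prodstructure} to obtain $a^{-\beta}$, and then apply Lemma~\ref{KMScharacterisationlemma}. The only minor difference is that the paper handles $\beta=\infty$ by first invoking Lemma~\ref{lemmagroundcharacterisation} to show $\psi_\infty$ is a ground state before taking the weak-$*$ limit, whereas you go directly to the limit---both are fine, since KMS$_\infty$ is defined via weak-$*$ limits.
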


\begin{proof}
Suppose first that $1< \beta < \infty$. Then the series
\[
\sum_{(\oldz,p^k)\in X_P} p^{- \beta k} = \sum_{k \in \N} p^k p^{- \beta k}
\]
converges with sum $(1-p^{1-\beta})^{-1}$, so the sum defining $\mu_{\beta,p}$ converges in norm in $M(\Omega)$ to a probability measure. 

To prove that $\psi_\beta$ is a KMS${}_\beta$ state, we compute $\psi_\beta (s^m v_a v_b^* s^{*n}) $ and apply Lemma~\ref{KMScharacterisationlemma}. Since $\psi_\beta$ factors through $E_\qxqx$, we have
$\psi_\beta (s^m v_a v_b^* s^{*n}) =0 $ whenever $m\neq n$ or $a\neq b$.
So suppose that $m=n$ and $a = b= \prod_{p|a} p^{e_p(a)}$. The isomorphism of $\TT(\nxnx)^\delta$ with $C(\Omega)$ carries $s^mv_av_a^*s^{*m}$ into $1_{m,a}$, which is the characteristic function of the set
$\{B(r,N): a|N,\ r(a)=m(a)\}$; the homeomorphism of Proposition~\ref{prodstructure} carries this set into
\begin{equation}\label{supp1ma}
\Big(\prod_{p|a}\{B(r,p^k):k\geq e_p(a),\ r(p^{e_p(a)})=m(p^{e_p(a)}) \}\Big)\times\Big(\prod_{q\notdiv a}X_q\Big).
\end{equation}
Thus 
\begin{align*}
\psi_\beta (s^m v_a v_a^* s^{*m})
&=\int 1_{m,a}\,d\mu_\beta=\mu_\beta\big(\{B(r,N):a|N,\ r(a)=m(a)\}\big)\\
&=\prod_{p|a}\mu_{\beta,p}\big(\{B(r,p^k):k\geq e_p(a),\ r(p^{e_p(a)})=m(p^{e_p(a)})\} \big)\times\Big(\prod_{q\notdiv a}\mu_{q,\beta}(X_q)\Big)\\
&=\prod_{p|a}\mu_{\beta,p}\big(\{B(r,p^k):k\geq e_p(a),\ r(p^{e_p(a)})=m(p^{e_p(a)}) \}\big)\\
&=\prod_{p|a}(1-p^{1-\beta})\Big(\sum_{k=e_p(a)}^\infty p^{-\beta k}\big(\#\{r\in \Z/p^k:r(p^{e_p(a)})=
m(p^{e_p(a)})\}\big)\Big).
\end{align*}
For $k\geq e_p(a)$ there are $p^{k-e_p(a)}$ elements $r$ in $\Z/p^k$ such that $r(p^{e_p(a)})=m(p^{e_p(a)})$. Thus
\begin{align*}
\psi_\beta (s^m v_a v_a^* s^{*m})
&=\prod_{p|a}(1-p^{1-\beta})\Big(\sum_{k=e_p(a)}^\infty p^{(1-\beta)k}p^{-e_p(a)}\Big)\\
&=\prod_{p|a}(1-p^{1-\beta})p^{-\beta e_p(a)}\Big(\sum_{l=0}^\infty p^{(1-\beta)l}\Big)\\
&=\prod_{p|a}p^{-\beta e_p(a)}=\Big(\prod_{p|a}p^{e_p(a)}\Big)^{-\beta }=a^{-\beta}.
\end{align*}
Since the expectation $E_\qxqx$ kills the nonzero powers of $s$, this calculation shows that $\psi_\beta$ satisfies \eqref{KMScharacterisation}, and hence Lemma~\ref{KMScharacterisationlemma} implies that $\psi_\beta$ is a KMS${}_\beta$ state.

Now suppose $\beta = 1$. Then the measure $\mu_{1}$ is the product of normalized Haar measures on the $\Z_p$, which is the normalised Haar measure on $\hatz \cong \partial\Omega$. This satisfies $\mu_1(aE)= a\inv\mu_1(E)$, and since the support of $1_{m,a}$ is $m+a \hatz$, we have
\[
\psi_1(s^m v_a v_a^* s^{*m})=\int_{\hatz} 1_{m,a}\,d\mu_1=\mu_1(m+a\hatz)=a\inv\mu_1(\hatz)=a\inv.
\]
So Lemma~\ref{KMScharacterisationlemma} also implies that $\psi_1$ is a KMS${}_1$ state. 

When $\beta = \infty$, the usual interpretation $a^{-\infty} = 0$ for $a>1$ and $1^{-\infty} = 1$ yields probability  measures $\mu_{\infty, p}$ on $X_p$ 
 concentrated at the point $(0,1)\in X_p$, and their product corresponds to the unit point mass $\mu_\infty$ concentrated at the point $B(0,1) \in \Omega_B$.
Then $\psi_\infty := (\mu_\infty )_* \circ E_\qxqx$ satisfies
 \begin{equation}\label{charpsiinfty}
\psi_\infty (s^mv_av_b^*s^{*n})=\begin{cases}
0&\text{ unless $a=b =1$ and $m=n$,}\\
1&\text{ if $a=b =1$ and $m=n$.}
\end{cases}
\end{equation}
and is a ground state by \lemref{groundcharacterisation}.
The characterisations of $\psi_\beta$ and $\psi_\infty$ show that 
 $\psi_\beta (c) \to \psi_\infty (c)$ as $\beta \to \infty$ for $c=s^mv_av_b^*s^{*n}$, and hence $\psi_\infty$ is a KMS$_\infty$ state.  
\end{proof}

To construct KMS$_{\beta}$ states for $\beta>2$, we use the Hilbert-space representation of $\nxnx$ described in the next lemma. For $2<\beta<\infty$, the state $\omega$ in the lemma will be lifted from a state on the quotient $C(\T)$ of $\TT(\N)$, hence given by a probability measure $\mu$ on $\T$, and then the isometry $U$ in the $GNS$ representation is the multiplication operator $(Uf)(z)=zf(z)$ on $L^2(\T,d\mu)$. When we construct ground states, $\omega$ can be any state of $\TT(\N)$.

\begin{lemma}\label{constructreps}
Let $\omega$ be a state of the Toeplitz algebra $\TT(\N)$, and let $U$ be the generating isometry for the GNS representation $(\HH_\omega,\pi_\omega,\xi_\omega)$ of $\TT(\N)$. Set 
\[
X:=\{(r,x):x\in \nx,\ r\in \Z/x\}
\]
and let $e_{r,x}$ be the usual basis for $\ell^2(X)$.
Let $S$ and $V_p$ be the isometries on $\ell^2(X,\HH_\omega)$ which are characterised by the following behaviour on elements of the form $fe_{(r,x)}$ for $f\in \HH_\omega$:
\begin{align*}
S(fe_{\oldz,x})&=\begin{cases} f e_{\oldz +1,x} & \text{ if } \oldz + 1 \not=0_{\Z/x},\
\\(Uf)e_{0,x} & \text{ if } \oldz + 1 =0_{\Z/x}, \text{ and}\end{cases}\\
V_p(fe_{\oldz,x})&=fe_{p\oldz,px}.
\end{align*}
Then $S$ and $\{V_p:p\in\primes \}$ satisfy the relations \textnormal{(T1)--(T5)} of \thmref{toeplitzpresentation}.
\end{lemma}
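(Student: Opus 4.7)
The plan is to verify each of (T1)--(T5) by direct computation on the spanning vectors $f e_{r,x}$ with $f \in \HH_\omega$ and $(r,x) \in X$. Before checking relations, I would first confirm that $S$ and the $V_p$ extend to bounded operators and are isometries. For each fixed $x$, the subspace $\bigoplus_{r \in \Z/x}\HH_\omega$ is $S$-invariant, and $S$ acts on it as a twisted cyclic shift: it advances $r$ by $1$ modulo $x$ and applies the isometry $U$ to the $\HH_\omega$-factor precisely when the index crosses $0$. Since $U$ is an isometry, so is $S$ on each $x$-block and hence globally. Each $V_p$ maps the $x$-block isometrically into the $px$-block via the injection $\times p\colon \Z/x \to \Z/px$ of \lemref{timesb}, so the $V_p$ are isometries.

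Relations (T2) and (T3) are short. For (T2), both $V_p V_q$ and $V_q V_p$ send $f e_{r,x}$ to $f e_{pqr,pqx}$. For (T3), I would first identify $V_p^*(f e_{s,y})$ as $f e_{s/p,\,y/p}$ when $p \mid y$ and $p \mid s$, and $0$ otherwise; then using $\gcd(p,q)=1$, both $V_p^* V_q$ and $V_q V_p^*$ applied to $f e_{r,x}$ vanish unless $p \mid x$ and $p \mid r$, in which case both equal $f e_{qr/p,\,qx/p}$. For (T5), applied to $f e_{r,x}$, we have $S^k(f e_{pr,px}) = f e_{pr+k,\,px}$ with no wrap (since $pr+k \leq p(x-1) + (p-1) < px$), and then $V_p^*$ annihilates this vector because $pr+k \not\equiv 0 \pmod p$.

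The main obstacle is the bookkeeping of the twist by $U$ in relations (T1) and (T4). For (T1), I would split into $r+1 \not\equiv 0$ and $r+1 \equiv 0$ in $\Z/x$. In the wrap case $r = x-1$, the left-hand side $V_p S(f e_{x-1,x}) = (Uf) e_{0,px}$ picks up one $U$; on the right-hand side, $S^p V_p(f e_{x-1,x}) = S^p(f e_{p(x-1),\,px})$ advances through $p(x-1),\,p(x-1)+1,\ldots,px-1,\,0$, wrapping exactly once in $\Z/px$, and yields the same $(Uf) e_{0,px}$. Relation (T4) is similar, with the delicate case being $r = 0$: then $S^* V_p(f e_{0,x}) = S^*(f e_{0,px}) = (U^*f) e_{px-1,\,px}$, while $S^{p-1} V_p S^*(f e_{0,x}) = S^{p-1}((U^*f) e_{p(x-1),\,px}) = (U^*f) e_{px-1,\,px}$ since the forward shift by $p-1$ starting from $p(x-1) = px-p$ does not wrap. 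Once these twist factors are matched on both sides, all five relations follow and the lemma is proved.
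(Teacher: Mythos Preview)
Your proposal is correct and follows essentially the same route as the paper: direct computation on the vectors $fe_{r,x}$, with the adjoint $V_p^*$ identified explicitly and the delicate points being the bookkeeping of the $U$-twist in (T1) and (T4) when the cyclic shift in $\Z/x$ or $\Z/px$ wraps through $0$. The paper organises (T1) and (T4) by first writing down a general formula for $S^k(fe_{r,x})$ and then specialising, whereas you handle the wrap and non-wrap cases directly using representatives in $\{0,\dots,x-1\}$, but the computations and the case splits are the same.
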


\begin{proof}
To verify (T1), first observe that 
\begin{equation}\label{totp1}
V_p S(fe_{\oldz,x}) 
= \begin{cases}  fe_{p\oldz +p,px} & \text{ if } \oldz + 1\neq 0_{\Z/x}\\
(Uf)e_{0,px} & \text{ if } \oldz + 1 = 0_{\Z/x}.\end{cases}
\end{equation}
To compute $S^pV_p$, first note that for $k\leq x$ we have
\[
S^k(fe_{\oldz,x})
=\begin{cases}
fe_{\oldz +k,x} & \text{ if $\oldz +i\neq 0_{\Z/x}$ for $i$ satisfying $0<i\leq k$,} \\
(Uf)e_{k-i,x} & \text{ if there exists $i$ such that $0<i\leq k$ and $\oldz +i= 0_{\Z/x}$,}\end{cases}
\]
which, since $p\leq px$, implies that
\begin{align*}
S^p  V_p (fe_{\oldz,x})
&=\begin{cases}  fe_{p\oldz +p,px} & \text{ if $p\oldz +i\neq 0_{\Z/px}$ for $i$ satisfying $0<i\leq p$,} \\
(Uf)e_{p-i,px} & \text{ if there exists $i$ such that $0<i\leq p$ and $p\oldz +i= 0_{\Z/px}$}\end{cases}\\
&=\begin{cases} fe_{p\oldz +p,px} & \text{ if $p\oldz +p\neq 0_{\Z/px}$,} \\
(Uf)e_{0,px} & \text{ if $p\oldz +p= 0_{\Z/px}$,}\end{cases}
\end{align*}
which is the same as \eqref{totp1} because $\times p:\Z/x\to \Z/px$ is injective (see \lemref{timesb}). Thus (T1) holds.

To verify relation (T2), we just need to observe that $(\times p)\circ (\times q)=(\times q)\circ (\times p)$: indeed, both are just $\times pq:\Z/x\to \Z_{pqx}$. For (T3), we suppose that $p$ and $q$ are distinct primes. The adjoint $V_p^*$ is given by
\[
V_p^*(fe_{\oldz,x})
=\begin{cases}  fe_{w,p^{-1}x} & \text{ if $p|x$ and $\oldz =pw$ for some $w\in \Z/p^{-1}x$,} \\
0 & \text{ otherwise.}\end{cases}
\]
Thus we have 
\begin{equation}\label{vp*vq}
V_p^*V_q(fe_{\oldz,x})=
\begin{cases}  fe_{w,p^{-1}qx} & \text{ if $p|qx$ and $q \oldz =pw$ for some $w\in \Z/p^{-1}qx$,} \\
0 & \text{ otherwise,}\end{cases}
\end{equation}
whereas
\begin{equation}\label{vqvp*}
V_qV_p^*(fe_{\oldz,x})=
\begin{cases}  fe_{q\zeta,qp^{-1}x} & \text{ if $p|x$ and $\oldz =p\zeta$ for some $\zeta\in \Z/p^{-1}x$,} \\
0 & \text{ otherwise.}\end{cases}
\end{equation}
We know from \lemref{timesb} that $\oldz=p\zeta\Longleftrightarrow \oldz\equiv 0\pmod p$, which is equivalent to $q \oldz\equiv 0\pmod{p}$ because $\gcd(q,p)=1$ ; thus the non-trivial cases in \eqref{vp*vq} and \eqref{vqvp*} coincide, with $w=q\zeta$, and we have $V_p^*V_q=V_qV_p^*$, which is (T3).

To verify (T4), we first compute the left-hand side: 
\begin{equation}\label{lhst4} 
S^* V_p f e_{\oldz,x} = S^* f e_{p\oldz,px}  
=\begin{cases} f  e_{p\oldz -1, px} & \text{ if $p\oldz \neq 0_{\Z/px}$} \\
(U^*f) e_{px -1,px} & \text{ if  $p\oldz = 0_{\Z/px}$.}\end{cases}\\
\end{equation}
For the right hand side, we have
\begin{align*}
S^{p-1}  V_p S^* f e_{\oldz,x}
&=\begin{cases}  S^{p-1}  V_p fe_{\oldz -1 ,x} & \text{ if $\oldz \neq 0_{\Z/x}$} \\
S^{p-1} V_p   (U^*f)e_{x-1,x} & \text{if $\oldz = 0_{\Z/x}$}\end{cases}\\
&=\begin{cases}  S^{p-1}  fe_{p\oldz -p ,px} & \text{ if $\oldz \neq 0_{\Z/x}$} \\
S^{p-1}   (U^*f)e_{px-p,px} & \text{ if  $\oldz = 0_{\Z/x}$}\end{cases}\\
&=\begin{cases} fe_{p\oldz -p +p-1,px} & \text{ if $p\oldz \neq 0_{\Z/px}$} \\
   (U^*f)e_{px-p+p-1,px} & \text{ if $p\oldz = 0_{\Z/px}$,}\end{cases}
\end{align*}
which is the same as \eqref{lhst4}.

Finally, we verify (T5). Suppose $1\leq k<p$. Then
\[
V_p^*S^kV_p(fe_{\oldz,x})
=\begin{cases}V_p^*(fe_{p\oldz +k,px}) & \text{ if $p\oldz +i\neq 0_{\Z/px}$ for $0<i\leq k$} \\
V_p^*((Uf)e_{k-i,x}) & \text{ if there exists $i$ such that $0<i\leq k$ and $p\oldz +i= 0_{\Z/px}$.}\end{cases}
\]
Since $0<k<p$, the second possibility does not arise. Thus
\[
V_p^*S^kV_p(fe_{\oldz,x})=\begin{cases}fe_{w,x}&\text{ if $p\oldz +k=pw$}\\
0&\text{ otherwise,}
\end{cases}
\]
which has to be $0$ because $p\oldz +k$ cannot be in the range of $\times p$ for $k$ in the given range. This confirms (T5), and completes the proof.
\end{proof}

We can now prove the existence of many KMS${}_\beta$ states for $\beta>2$. 

\begin{proposition}[\thmref{maintheorem}(3): KMS$_\beta$ states from probability measures  on $\T$]\label{constructKMS>2}
Suppose $\beta \in (2,\infty)$ and $\mu$ is a probability measure on $\T$.  Then there is a state $\psi_{\beta,\mu}$ of
$\TT(\nxnx)$ such that
\begin{equation}\label{KMScomputation}
\psi_{\beta,\mu}(s^m v_a v_b^* s^{*n}) = \begin{cases}  \displaystyle 0 & \text{ if } a \neq b  \text{ or } m \not\equiv n \pmod a\\ \displaystyle
 \frac{1}{a \zeta(\beta -1)} \sum_{a \mid x \mid (m-n)} x^{1-\beta} \int_{\T}\newz^{({m-n})/{x}}\,d\mu(\newz) & \text { if }  a = b \text{ and } m \equiv n    \pmod  a. 
\end{cases}
\end{equation}
There is also a KMS$_{\infty}$ state $\psi_{\infty,\mu}$ such that 
\begin{equation}\label{KMSinftycomputation}
\psi_{\infty,\mu}(s^m v_a v_b^* s^{*n}) = \begin{cases}  \displaystyle 0 & \text{ unless } a = b =1\\ 
\displaystyle \int_{\T} \newz^{m-n} \,d\mu(\newz) & \text { if }  a = b = 1.\end{cases}
\end{equation}
For $\beta \in (2,\infty]$, the correspondence $\mu \to \psi_{\beta,\mu}$ is an affine map of the set $P(\T)$ of probability measures on the unit circle into the simplex of KMS$_\beta$ states.
Moreover, the extremal states $\psi_{\infty,\newz}$ for $\newz \in \T$ are pure and pairwise inequivalent.
\end{proposition}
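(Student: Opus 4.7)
The plan is to construct all these states via the representation of Lemma~\ref{constructreps}. For $\mu \in P(\T)$, let $\omega_\mu$ be the state of $\TT(\N)$ obtained by pulling back $f \mapsto \int_\T f\,d\mu$ along the quotient $\TT(\N) \twoheadrightarrow C(\T)$; its GNS data are $(L^2(\T,\mu),\,\xi_\mu = 1,\,U = \text{mult.\ by }z)$, with $U$ unitary. Feeding this into Lemma~\ref{constructreps} produces a representation $\pi_\mu$ of $\TT(\nxnx)$ on $\HH_\mu := \ell^2(X,L^2(\T,\mu))$. For $\beta \in (2,\infty)$, set
\[
\psi_{\beta,\mu}(T) := \frac{1}{\zeta(\beta-1)}\sum_{(r,x)\in X} x^{-\beta}\,\bigl\langle \pi_\mu(T)(\xi_\mu e_{r,x}),\,\xi_\mu e_{r,x}\bigr\rangle.
\]
Since $\sum_{(r,x)\in X}x^{-\beta} = \sum_{x\in\nx} x\cdot x^{-\beta} = \zeta(\beta-1) < \infty$ for $\beta > 2$, this is a positive functional with $\psi_{\beta,\mu}(1) = 1$, hence a state.

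Formula~\eqref{KMScomputation} would then be verified by a direct computation, unwinding the action of $s^m v_a v_b^* s^{*n}$ on $\xi_\mu e_{r,x}$ using Lemma~\ref{constructreps}: the operator $v_b^*$ annihilates $\xi_\mu e_{r,x}$ unless $b \mid x$ and $b \mid r$; the cyclic shifts $s^{*n}$ and $s^m$ preserve the column and pick up phase factors $\bar z^{c_1}$ and $z^{c_2}$ from $U$ each time the cyclic order wraps through $0$; and $v_a$ multiplies the column index by $a$. Matching input and output columns forces $a = b$, the projection $v_a v_a^*$ forces $a \mid r$, and returning to row $r$ forces $x \mid m - n$. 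A short bookkeeping of wraparounds identifies $c_2 - c_1 = (m-n)/x$, so each contributing $r$ yields $\int_\T z^{(m-n)/x}\,d\mu$ and there are $x/a$ such rows; combining with the weight $x^{-\beta}/\zeta(\beta-1)$ and summing over $x$ with $a\mid x \mid m-n$ gives exactly~\eqref{KMScomputation}. Lemma~\ref{KMScharacterisationlemma} then yields KMS$_\beta$. Affinity of $\mu \mapsto \psi_{\beta,\mu}$ is immediate from the linear dependence on $\mu$. For $\beta = \infty$, set $\psi_{\infty,\mu}(T) := \langle\pi_\mu(T)(\xi_\mu e_{0,1}),\,\xi_\mu e_{0,1}\rangle$; the same calculation produces~\eqref{KMSinftycomputation}, and since the weights $x^{-\beta}/\zeta(\beta-1)$ concentrate at $x = 1$ as $\beta\to\infty$, this $\psi_{\infty,\mu}$ is the weak-$*$ limit of $\psi_{\beta,\mu}$, hence a KMS$_\infty$ state.

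The main obstacle lies in establishing purity and pairwise inequivalence of the extremal ground states $\psi_{\infty,z}$, $z\in\T$. With $\mu = \delta_z$ we have $L^2(\T,\delta_z) \cong \C$, so $\pi_z$ acts on $\ell^2(X)$, and setting $\xi_z := e_{0,1}$, cyclicity follows from $\pi_z(s^r v_x)\xi_z = e_{r,x}$ for $0 \leq r < x$. The crux is to show that $\xi_z$ is the unique vector in $\ell^2(X)$, up to a scalar, satisfying both (i) $\pi_z(v_a)^*\eta = 0$ for every $a > 1$ and (ii) $\pi_z(s)\eta = z\eta$. Condition~(i) forces $\eta$ to lie in the ``vacuum'' subspace spanned by $\{e_{0,1}\} \cup \{e_{r,x} : 1 \leq r < x,\ \gcd(r,x) = 1\}$. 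Condition~(ii), combined with the fact that $\pi_z(s)$ preserves each column and acts there as a cyclic shift with wrap-phase $z$, restricts each column component $\eta_x$ to an eigenvector at eigenvalue $z$; the eigenvalues on column $x$ are the $x$-th roots of $z$, and the value $z$ itself arises only when $z^{x-1} = 1$, with eigenvector proportional to $\sum_{r \in \Z/x} z^{-r} e_{r,x}$. This eigenvector has a non-zero $e_{0,x}$ component and so fails~(i) whenever $x > 1$, whence only the column $x = 1$ contributes and $\eta \in \C\xi_z$. Consequently any $T$ in the commutant of $\pi_z$ satisfies $T\xi_z \in \C\xi_z$, and by cyclicity $T \in \C\cdot I$; hence $\pi_z$ is irreducible and $\psi_{\infty,z}$ is pure. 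For inequivalence, suppose $z \neq z'$ and $W : \HH_z \to \HH_{z'}$ intertwines $\pi_z$ and $\pi_{z'}$: then $W\xi_z$ is a non-zero vector of $\HH_{z'}$ satisfying (i) together with $\pi_{z'}(s)(W\xi_z) = z\,W\xi_z$, and the same characterisation applied inside $\HH_{z'}$ forces $W\xi_z \in \C\xi_{z'}$, contradicting $\pi_{z'}(s)\xi_{z'} = z'\xi_{z'}$ with $z' \neq z$.
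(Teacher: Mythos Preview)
Your construction of $\psi_{\beta,\mu}$ and verification of formula~\eqref{KMScomputation} mirror the paper's approach closely; one small slip in your sketch is that the projection $v_av_a^*$, being applied \emph{after} $s^{*n}$, forces $a \mid (r-n)$ rather than $a\mid r$, but since this still leaves exactly $x/a$ admissible values of $r$ the count and the formula are unaffected. The treatment of the KMS property via Lemma~\ref{KMScharacterisationlemma}, the affine dependence on $\mu$, and the identification of $\psi_{\infty,\mu}$ as a weak-$*$ limit are all as in the paper.

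Where you diverge is in the proof of purity and pairwise inequivalence. The paper argues as follows: in the representation $\pi_z$ on $\ell^2(X)$, the infinite product $Q = \prod_p\prod_{j=0}^{p-1}(1-S^jV_pV_p^*S^{*j})$ converges in the weak operator topology to the rank-one projection onto $\C e_{0,1}$, and its translates $Q_{m,a}=S^mV_aQV_a^*S^{*m}$ are the rank-one projections onto $\C e_{m,a}$, all lying in the bicommutant. An intertwiner $A$ between $\pi_z$ and $\pi_w$ must therefore be diagonal in the basis $\{e_{m,a}\}$; computing $A\pi_z(s^{na})e_{0,1}$ in two ways yields $z^n\lambda_{k,a}=w^n\lambda_{0,1}$ for every $n$, forcing either $A=0$ or $z=w$ and $A$ scalar. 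You instead characterise $e_{0,1}$ spectrally, as the unique joint solution (up to a scalar) of $V_a^*\eta=0$ for all $a>1$ and $S\eta=z\eta$, and deduce irreducibility in one stroke via cyclicity. Both arguments are correct; yours is more self-contained for this proposition, while the paper's projection $Q$ is an instance of the device used later in Lemma~\ref{phirestrictedtoQB} to reconstruct arbitrary KMS$_\beta$ states, so its appearance here foreshadows that machinery.

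One remark on your inequivalence step: the characterisation transported into $\HH_{z'}$ with eigenvalue $z$ (rather than $z'$) actually forces $W\xi_z=0$ outright, since on column $x=1$ the operator $\pi_{z'}(s)$ has the single eigenvalue $z'\neq z$, and for $x>1$ every eigenvector of $\pi_{z'}(s)|_x$ has nonzero $e_{0,x}$ component and so fails condition~(i). Your phrasing ``forces $W\xi_z\in\C\xi_{z'}$'' is a slight overshoot, but the contradiction with $W$ unitary is immediate either way.
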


\begin{proof}
As anticipated before \lemref{constructreps}, we apply that Lemma to the state $\omega$ of $\TT(\N)$ lifted from the measure $\mu$ on $\T$, so the Hilbert space $\HH_\omega$ is $L^2(\T,d\mu)$ and $(Uf)(z)=zf(z)$. The resulting family $S,V$ gives us a representation $\pi_\mu:=\pi_{S,V}$ of $\TT(\nxnx)$ on the Hilbert space $\ell^2(X,L^2(\T,d\mu))$. We aim to use this representation to define the states $\psi_{\beta,\mu}$. For motivation, we suppose first that $\mu=\delta_z$; then $U$ is multiplication by $\newz$ on $\C$ and the Hilbert space is $\ell^2(X)$ with the usual orthonormal basis $\{e_{\oldz,x}:(\oldz,x)\in X\}$. In this special case, we can borrow a construction from \cite{bos-con}.

We first note that there is a unitary representation $W:\R\to U(\ell^2(X))$ such that $W_te_{r,x}=x^{it}e_{r,x}$, and this representation implements the dynamics $\sigma$ in the representation $\pi_\mu$ --- in other words, $(\pi_\mu,W)$ is a covariant representation of the system $(\TT(\nxnx),\R,\sigma)$. The infinitesimal generator $H$ of $W$ is the (unbounded) self-adjoint operator $H$ on $\ell^2(X)$ such that $W_t=e^{itH}$, and is diagonal with respect to the basis $\{e_{\oldz,x}\}$, with eigenvalues $\ln x $ of multiplicity $x$. Then $e^{-\beta H}$ is a positive bounded operator which is also diagonalised by the $e_{\oldz,x}$ and satisfies $e^{-\beta H}e_{\oldz,x}=x^{-\beta}e_{\oldz,x}$. Thus for $\beta>2$, $e^{-\beta H}$ is a trace-class operator with
\[
\tr e^{-\beta H} = \sum_{(\oldz,x)\in X} \langle x^{-\beta}e_{\oldz,x},e_{\oldz,x}\rangle =
\sum_{x\in \nx} x^{1 -\beta} = \zeta(\beta-1),
\]
and $\zeta(\beta- 1)^{-1} e^{-\beta H}$ is a bounded positive operator with trace one,
which defines a state $\psi_{\beta,\mu}$ on $\TT(\nxnx)$ through the representation $\pi_\mu$:
\begin{equation}\label{defpsit}
\psi_{\beta,\mu} (T):= \frac{1}{\zeta(\beta-1)} \tr(e^{-\beta H} \pi_\mu(T))=\frac{1}{\zeta(\beta-1)}\sum_{(\oldz,x)\in X}x^{-\beta}\langle\pi_\mu(T)e_{\oldz,x},e_{\oldz,x}\rangle.
\end{equation}

For more general $\mu$, we can still define $H$ formally by $H(fe_{\oldz,x})=(\ln x)fe_{\oldz,x}$, but now  $e^{-\beta H}$ is no longer trace-class. (If we view $H_\mu$ as $\ell^2(X)\otimes L^2(\T,d\mu)$, the new $e^{-\beta H}$ is the tensor product $e^{-\beta H}\otimes 1$ of the old one with the identity operator on $L^2(\T,d\mu)$, which is not trace-class unless $L^2(\T,d\mu)$ is finite-dimensional.) Nevertheless, using \eqref{defpsit} as motivation, we can still define $\psi=\psi_{\beta,\mu}$ using the elements $e_{r,x}=1e_{r,x}$ by 
\[
\psi(T)=\frac{1}{\zeta(\beta-1)}\sum_{(\oldz,x)\in X}x^{-\beta}\langle\pi_\mu(T)e_{\oldz,x},e_{\oldz,x}\rangle, 
\]
and verify directly that $\psi$ is a positive functional with $\psi(1)=1$, hence is a state. We want to show that $\psi$ is a KMS$_{\beta}$-state satisfying~\eqref{KMScomputation}.

We check \eqref{KMScomputation} first. We have
\[
\psi(s^m v_a v_b^* s^{*n})
=\frac{1}{\zeta(\beta-1)}\sum_{(\oldz,x)\in X}x^{-\beta}\langle V_b^* S^{*n}e_{\oldz,x},V_a^*S^{*m}e_{\oldz,x}\rangle.
\]
Now $V_b^*S^{*n}e_{\oldz,x}$ has the form $fe_{s,b^{-1}x}$ and $V_a^*S^{*m}e_{\oldz,x}$ has the form $ge_{t,a^{-1}x}$, so the inner product in the $(\oldz,x)$-summand is zero unless $a^{-1}x=b^{-1}x$ in $\nx$, or equivalently, unless $a=b$ and $a\mid x$. Similarly, since $S^{*n}e_{\oldz,x}$ has the form $he_{s,x}$, it is either in the range of $V_aV_a^*$ or orthogonal to it.
Thus
\begin{align*}
\psi(s^m v_a v_a^* s^{*n})
&=\frac{1}{\zeta(\beta-1)}\sum_{\{(\oldz,x)\in X\;:\;a|x\}}x^{-\beta}\langle S^m V_a V_a^* S^{*n}e_{\oldz,x},e_{\oldz,x}\rangle\\
&=\frac{1}{\zeta(\beta-1)}\sum_{\{(\oldz,x)\in X\;:\;a|x,\ S^{*n}e_{\oldz,x}\in V_aV_a^*(H_\mu)\}}x^{-\beta}\langle S^mS^{*n}e_{\oldz,x},e_{\oldz,x}\rangle.
\end{align*}
For each $x$ such that $a|x$, there are precisely $a^{-1}x$ elements $\oldz \in \Z/x$ such that $S^{*n}e_{\oldz,x}$ belongs to the range of $V_aV_a^*$. For each such element, 
\[
\langle S^mS^{*n}e_{\oldz,x},e_{\oldz,x}\rangle=
\begin{cases}0&\text{ unless $x$ divides $m-n$}\\
\int_{\T}\newz^{(m-n)/x}\,d\mu(\newz)&\text{ if $x|(m-n)$.}
\end{cases}
\]
So the right-hand side of~\eqref{KMScomputation} vanishes unless $a=b$ and $a|(m-n)$ (which ensures that there exist $x$ satisfying $a|x|(m-n)$), and in that case
\begin{equation}\label{formulaforpsi}
\psi(s^m v_a v_a^* s^{*n})
=\frac{1}{\zeta(\beta-1)}\sum_{\{x\in\nx\;:\;a|x,\ x|(m-n)\}}(a^{-1}x)x^{-\beta}\int z^{(m-n)/x}\,d\mu (\newz),
\end{equation}
as required.

To check that $\psi$ is a KMS$_{\beta}$ state using \lemref{KMScharacterisationlemma}, we need to see that when $a|(m-n)$ we have
\[
\psi(s^m v_a v_a^* s^{*n})=a^{-\beta}\psi(s^{(((m-n)/a))}),
\]
where we use double parentheses according to Convention~\ref{convention(())}.
However, expanding out the right-hand side gives
\begin{align*}
a^{-\beta}\psi(s^{((m-n)/a))})
&=\frac{a^{-\beta}}{\zeta(\beta-1)}\sum_{(w,y)\in X}y^{-\beta}\langle S^{(((m-n)/a))}e_{w,y},e_{w,y}\rangle\\
&=\frac{a^{-\beta}}{\zeta(\beta-1)}\sum_{\{(w,y)\in X\;:\;y|(m-n)/a\}}y^{-\beta}\Big(y\int_{\T}\newz^{(m-n)/ay}\,d\mu(\newz)\Big),
\end{align*}
which is another way of writing the right-hand side of~\eqref{formulaforpsi}.

For $\beta = \infty$, we use the same representation $\pi_\mu$, and set
\[
\psi_{\infty, \mu} (T) := \langle \pi_\mu( T) e_{0,1}, e_{0,1} \rangle;
\]
the state $\psi_{\infty,\mu}$ satisfies  \eqref{KMSinftycomputation}, and is a ground state by  \lemref{groundcharacterisation}.
To see that $\psi_{\infty,\mu}$ is a KMS$_\infty$ state,
notice first  that the only term on the right of \eqref{KMScomputation}
which survives the limit as $\beta \to \infty$ has $x = 1$, and there is such a term only when $a = 1$.
Since $\lim_{\beta \to \infty} \zeta(\beta-1) = 1$, we deduce that $\psi_{\beta,\mu}$ converges weak* to $\psi_{\infty,\mu}$ as $\beta\to\infty$, and  hence $\psi_{\infty,\mu}$ is a KMS$_\infty$ state.
Formula~\eqref{KMScomputation}, or formula \eqref{KMSinftycomputation} for $\beta = \infty$, shows that
the map $\mu\mapsto \psi_{\beta,\mu}$
is affine and weak*-continuous from $P(\T)$ into the simplex of KMS$_{\beta}$ states.

We claim that the states $\psi_{\infty, \newz}$ for $ \newz \in \T$ are pure and mutually inequivalent. The vector $e_{0,1} \in H_\newz = \ell^2(X)$ is cyclic for $\pi_\newz$ for each $\newz \in \T$, and thus $\pi_\newz$ can be regarded as the GNS representation of the corresponding vector state
\[
\psi_{\infty,\newz} (T) := \langle \pi_\newz(T)e_{0,1},e_{0,1}\rangle.
\]
So it suffices to show that if $A \in \mathcal B( \ell^2(X))$ is a nonzero projection intertwining $\pi_\newz$ and $\pi_w$ for some $\newz, w\in \T$, then $\newz = w$ and $A= 1$. Before we do this, we observe  that the product 
 \[
Q: =\prod_{p\in \primes} \prod_{j = 0}^{p-1} \big(1 - \pi_z(s^j)V_p V_p^*\pi_z(s^{*j})\big),
 \]
converges in the weak-operator topology on $\ell^2(X)$ to the rank-one projection onto $\C e_{0,1}$.
Indeed, we have $V_p^*\pi_z(s^{*j})e_{0,1}= \bar\newz^j V_p^*e_{0,1}=0$ for every $(j,p)$, and hence for each finite subset $F$ of $\primes$ we have $(1 - \pi_z(s^j)V_p V_p^*\pi_z(s^{*j}))e_{0,1}=e_{0,1}$ for all $p\in F$ and $j<p$;
on the other hand, if $b \neq 1$, there are a prime $p$ that divides $b$ 
and a value of $j$ such that $(j,p) \leq (n,b)$ in the quasi-lattice order
(see the proof of \thmref{qnisboundaryquotient}), and then
 $(1 - \pi_z(s^j)V_p V_p^*\pi_z(s^{*j})) e_{n,b} = 0$.
More generally, for each $a\in \nx$ and $0\leq m < a$,  
\[
Q_{m,a}  :=  \pi_z(s^m)V_a  Q V_a^* \pi_z(s^{*m}),
\]
is the rank-one projection onto the vector $e_{m,a}$, and is in $\pi_\newz(\TT(\nxnx))''$ for every $z\in \T$. Notice that the operator $Q_{m,a}$ on $\ell^2(X)$ is the same for every $\newz$.

Suppose now that $A \in \mathcal B( \ell^2(X))$ is a projection intertwining $\pi_\newz$ and $\pi_w$ for some $\newz, w\in \T$. 
Since  $Q_{m,a}$ belongs to $\pi_\newz(\TT(\nxnx))''$ and $\pi_w(\TT(\nxnx))''$, it commutes with $A$. This implies that there are scalars $\lambda_{k,a}$ such that $Ae_{k,a} = \lambda_{k,a}e_{k,a}$. Then we have
\begin{align*}
  \newz^n \lambda_{k,a}e_{k,a} &= A(\newz^ne_{k,a}) = A (\pi_z(s^{na})e_{k,a})=A (\pi_z(s^{na+k})V_a e_{0,1})\\
  &=   \pi_w(s^{na+k}) V_a  (Ae_{0,1})=
 \pi_w(s^{na+k}) V_a(\lambda_{0,1}e_{0,1}) = \lambda_{0,1}w^ne_{k,a}.
\end{align*}
Thus
 $\lambda_{k,a} = (w/\newz)^n   \lambda_{0,1} $ for every $n\in \N$,
 and this implies that either  $\lambda_{k,a} = 0$ for every $(k,a)$, or
 $\newz =w$, in which case $\lambda_{k,a} = \lambda_{0,1}$ for every $(k,a)$. Either way, $A$ is a multiple of the identity, and the representations $\pi_\newz$ are irreducible and mutually inequivalent. Thus the corresponding vector states $\psi_{\infty,\newz}$ are pure
and mutually inequivalent.
\end{proof}
 
We now prove the parts of  \thmref{maintheorem} which describe the ground states on $\TT(\nxnx)$.

\begin{proof}[Proof of part (4) of \thmref{maintheorem}]
Since the additive generator $s$ of $\TT(\nxnx)$ is a proper isometry, Coburn's Theorem implies that $C^*(s)$ is naturally isomorphic to $\TT(\N)$. The restriction $\omega:=\psi|_{C^*(s)}$ is then a positive functional satisfying $\omega(1)=1$, and hence is a state of $C^*(s)\cong \TT(\N)$. So $\psi\mapsto \psi|_{C^*(s)}$ maps ground states to states of $\TT(\N)$. \lemref{lemmagroundcharacterisation} implies that $\psi$ satisfies \eqref{formground}, which implies that $\psi\mapsto \psi|_{C^*(s)}$ is injective  on ground states.

To see that $\psi\mapsto \psi|_{C^*(s)}$ is surjective, let $\omega$ be a state of $\TT(\N)$, let $\pi_{S,V}$ be the representation of $\TT(\nxnx)$ on $\ell^2(X,\HH_\omega)$ constructed in \lemref{constructreps}, and define
\[
\psi_{\omega}(T):=\langle \pi(T) \xi_\omega e_{0,1}, \xi_\omega e_{0,1}\rangle\ \text{ for $T\in \TT(\nxnx)$.}
\]
We then have
\begin{equation}\label{comppsiomega}
\psi_\omega(s^m v_a v_b^* s^{*n})= 
\langle S^m V_aV_b^*S^{*n} \xi_\omega e_{0,1}, \xi_\omega e_{0,1}\rangle = \langle V_b^*S^{*n}\xi_\omega e_{0,1}, V_a^*S^{*m}\xi_\omega e_{0,1}\rangle.
\end{equation}
Since $V_b^*S^{*n}\xi_\omega e_{0,1}$ vanishes unless $b=1$, the right-hand side of \eqref{comppsiomega} vanishes unless $a=b=1$, and \lemref{lemmagroundcharacterisation} implies that $\psi_\omega$ is a ground state. On the other hand, if $a=b=1$, then \eqref{comppsiomega} gives
\[
\psi_\omega(s^m v_a v_b^* s^{*n})= 
\langle S^m S^{*n} \xi_\omega e_{0,1}, \xi_\omega e_{0,1}\rangle
=\langle \pi_{\omega}(s^m s^{*n}) \xi_\omega, \xi_\omega\rangle=\omega(s^ms^{*n}),
\]
which implies that $\psi_\omega|_{C^*(s)}=\omega$. We now know that $\psi\mapsto \psi|_{C^*(s)}$ is a bijection from the set of ground states onto the state space of $\TT(N)$. 

The map $\psi\mapsto \psi|_{C^*(s)}$ is obviously affine. Equation~\eqref{formground} implies that it is a homeomorphism for the respective weak* topologies, and hence it is an affine isomorphism of compact convex sets. This implies in particular that the extremal ground states are those of the form $\psi_\omega$ where $\omega$ is a pure state of $\TT(\N)$. Since the GNS representation $\pi_\omega$ of $\TT(\N)$ is irreducible, the Wold decomposition for the isometry $\pi_\omega(s)$ implies that $\pi_\omega$ is either equivalent to the identity representation of $\TT(\N)$ on $\ell^2$ or is lifted from an irreducible representation of $C(\T)$. Thus, since $\omega$ is a vector state in its GNS representation, $\omega$ is either a vector state for the identity representation or is lifted from an evaluation map on $C(\T)$.
\end{proof}

%%%%%%%%%%%%%%%%%%%%%%%%%%%%%%%%%%%%%
\section{Surjectivity of the parametrisation of KMS states}\label{secsurjectivity}
%%%%%%%%%%%%%%%%%%%%%%%%%%%%%%%%%%%%%
To show that all KMS states 
arise via the above construction, we need to show that in any GNS representation, there are analogues of the projection $Q$ which we used in the proof of \proref{constructKMS>2}. To deal with the case where $\zeta(\beta-1)$ does not converge, we need to use also analogous projections involving products over finite sets of primes. For each subset $\setb$ of $ \primes$, let $\nx_\setb$ denote the 
semigroup of positive integers with all prime factors in $\setb$; 
the corresponding zeta function and Euler product are given by
\begin{equation}\label{eulerproductB}
\zeta_\setb(\beta) := \sum_{a\in \nx_\setb} a^{-\beta}= \prod_{p\in \setb} (1-p^{-\beta})\inv.
\end{equation}
For every $\setb$, the series converges for $\beta>1$, but if $\setb$ is finite it also converges 
for $\beta > 0$.

The reconstruction formula in part (3) of the following lemma is one of our main technical innovations. We will see in the Appendix how this technique also simplifies the proof of uniqueness for the KMS states of the Bost-Connes systems, and we believe that it is likely to be useful elsewhere.

\begin{lemma}\label{phirestrictedtoQB}
Let $\beta >1$ and suppose $\phi$ is a KMS$_{\beta}$ state. 
Form the GNS-representation $(H_\phi,\pi_\phi,\xi_\phi)$ of $\TT(\nxnx)$, so that $\phi(\cdot)=\langle \pi_\phi(\cdot)\xi_\phi,\xi_\phi\rangle$, and 
denote by $\tilde\phi$ the vector state extending $\phi$ to all bounded operators on $H_\phi$.  Write $S=\pi_\phi(s)$, $V_p=\pi_\phi(v_p)$, and let $E$ be a subset of $\primes$.  Then the product 
\[
 Q_\setb: =\prod_{p\in \setb} \prod_{j = 0}^{p-1} (1 - S^j V_p V_p^*S^{*j})
 \]
converges in the weak-operator topology to a projection $Q_\setb$ in $\pi_\phi(\TT(\nxnx))''$, which satisfies 
 \begin{enumerate}
 \item  $\tilde\phi(Q_\setb) ={ \zeta_\setb(\beta-1)}\inv$;
 \smallskip
 \item  if $E$ is a subset of $\primes$ such that $\zeta_\setb(\beta -1)  < \infty$, then $\phi_{Q_\setb} (T) := \zeta_\setb(\beta -1) \tilde \phi(Q_\setb \pi_\phi(T) Q_\setb)$ defines a state $\phi_{Q_\setb}$ of  $\TT(\nxnx)$, called the \emph{conditional state of $\phi$ with respect to $Q_\setb$};
  \smallskip
 \item if $\zeta_\setb(\beta -1)  < \infty$, then $\phi$ can be reconstructed from  its conditional state $\phi_{Q_\setb}$ by the formula
\begin{equation}\label{reconstruct}
 \phi (T) =  \sum_{a\in \nx_\setb}\sum_{k=0}^{a-1} \frac{a^{-\beta}}{\zeta_\setb(\beta-1)} \phi_{Q_\setb}(v_a^* s^{*k} \, T \, s^k v_a);
\end{equation}
in particular, for $n \geq 0$ we have
\begin{equation}\label{reconstructionequationSn}
\phi(s^n)=\frac{1}{\zeta_\setb(\beta-1)}\sum_{\{a\in \nx_\setb\;:\;a|n\}}a^{1-\beta}\phi_{Q_\setb}(s^{n/a}).
\end{equation}
 \end{enumerate}
\end{lemma}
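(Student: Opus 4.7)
\emph{Set-up and proof of (1) and (2).} Write $S = \pi_\phi(s)$, $V_p = \pi_\phi(v_p)$, and let $P_{j,p} := S^j V_p V_p^* S^{*j}$ be the range projection of $\pi_\phi(w_{(j,p)})$. Nica covariance \eqref{genNicacov} implies that any two such projections commute, their product being either another range projection or zero; in particular all the factors of $Q_\setb$ commute. Hence for each finite $F \fsubset \setb$ the operator $Q_F$ is a projection, and enlarging $F$ introduces only additional projection factors, so $\{Q_F\}$ is a decreasing net which converges in the strong operator topology to a projection $Q_\setb \in \pi_\phi(\TT(\nxnx))''$. Expanding $Q_F$ and using $\phi(s^l v_a v_a^* s^{*l}) = a^{-\beta}$ (an immediate consequence of the KMS$_\beta$ condition, as observed in the proof of \proref{noequilibriumatlowbeta}) yields $\tilde\phi(Q_F) = \prod_{p \in F}(1 - p^{1-\beta})$; normality of the vector state $\tilde\phi$ then gives $\tilde\phi(Q_\setb) = \prod_{p \in \setb}(1-p^{1-\beta}) = \zeta_\setb(\beta-1)^{-1}$, proving (1). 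Part (2) follows at once since $T \mapsto \zeta_\setb(\beta-1)\tilde\phi(Q_\setb \pi_\phi(T) Q_\setb)$ is manifestly positive and sends $1$ to $1$.

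\emph{Orthogonality for (3).} For $a \in \nx_\setb$ and $0 \leq k < a$ set $R_{k,a} := S^k V_a Q_\setb V_a^* S^{*k}$. I claim that the $R_{k,a}$ are pairwise orthogonal. The case $a = a'$, $k \neq k'$ reduces to $V_a^* S^{((k'-k))} V_a = 0$, which is (T5'). For $a \neq a'$ in $\nx_\setb$, \lemref{covarianceonngenerators} reduces $V_a^* S^{*k} S^{k'} V_{a'}$ either to zero (in which case we are done) or to $S^\alpha V_{a'/d} V_{a/d}^* S^{*\beta}$, where $d = \gcd(a,a')$; since $a \neq a'$ at least one of $a/d$, $a'/d$ strictly exceeds $1$, hence is divisible by some prime $p \in \setb$. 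Assuming without loss of generality that $p \mid a'/d$, write $\alpha = pm + j$ with $0 \leq j < p$. By (T1), $S^\alpha V_p = S^j V_p S^m$, and since $(1 - P_{j,p})(S^j V_p) = 0$ while $(1-P_{j,p})$ is one of the commuting factors of $Q_\setb$, we obtain $Q_\setb S^\alpha V_p = 0$. Writing $V_{a'/d} = V_p V_{(a'/d)/p}$ then gives $Q_\setb S^\alpha V_{a'/d} = 0$, so $Q_\setb V_a^* S^{*k} S^{k'} V_{a'} Q_\setb = 0$, as required.

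\emph{Reconstruction.} Orthogonality makes $P_\setb := \sum_{a \in \nx_\setb,\, 0 \leq k < a} R_{k,a}$ a well-defined projection in $\pi_\phi(\TT(\nxnx))''$. Since $\sigma_t$ fixes each polynomial $q_F$ representing $Q_F$, the element $q_F v_a^* s^{*k}$ is entire analytic with $\sigma_{i\beta}(q_F v_a^* s^{*k}) = a^\beta q_F v_a^* s^{*k}$; applying the KMS$_\beta$ condition to this element and to $T s^k v_a q_F$, using $q_F^2 = q_F$, and then letting $F \nearrow \setb$ by normality of $\tilde\phi$ gives
\[
\tilde\phi(Q_\setb V_a^* S^{*k}\pi_\phi(T) S^k V_a Q_\setb) = a^\beta \tilde\phi(\pi_\phi(T) S^k V_a Q_\setb V_a^* S^{*k}).
\]
Summing this identity against $a^{-\beta}/\zeta_\setb(\beta-1)$ over $a \in \nx_\setb$ and $0 \leq k < a$ shows that the right-hand side of \eqref{reconstruct} equals $\tilde\phi(\pi_\phi(T) P_\setb)$. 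Taking $T = 1$ in the same computation gives $\tilde\phi(P_\setb) = \zeta_\setb(\beta-1)^{-1} \sum_{a \in \nx_\setb} a \cdot a^{-\beta} = 1$, so $\|(1-P_\setb)\xi_\phi\|^2 = \tilde\phi(1-P_\setb) = 0$ and $P_\setb \xi_\phi = \xi_\phi$. Hence $\tilde\phi(\pi_\phi(T) P_\setb) = \langle \pi_\phi(T)\xi_\phi, \xi_\phi\rangle = \phi(T)$, proving \eqref{reconstruct}; specialising to $T = s^n$, together with (T1) ($v_a^* s^n v_a = s^{n/a}$ when $a|n$) and (T5') (otherwise zero), yields \eqref{reconstructionequationSn}.

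\emph{Main obstacle.} The decisive step is the orthogonality argument for distinct $a, a' \in \nx_\setb$: without the annihilation identity $Q_\setb S^j V_p = 0$ for $p \in \setb$ and $0 \leq j < p$, the family $\{R_{k,a}\}$ would fail to be pairwise orthogonal, $P_\setb$ would not be a projection with $P_\setb \xi_\phi = \xi_\phi$, and the reconstruction would not reduce cleanly to $\tilde\phi(\pi_\phi(T) P_\setb) = \phi(T)$.
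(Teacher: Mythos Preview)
Your proof is correct and follows essentially the same route as the paper's. The only organisational difference is in part~(3): the paper sandwiches $\pi_\phi(T)$ between two copies of $P_\setb=\sum R_{k,a}$, expands into a double sum, and then uses the KMS condition together with the orthogonality $Q_\setb V_b^*S^{*l}S^kV_aQ_\setb=0$ (for $(k,a)\neq(l,b)$) to kill the off-diagonal terms; you instead apply the KMS condition term by term to rewrite the right-hand side of \eqref{reconstruct} as $\tilde\phi(\pi_\phi(T)P_\setb)$ and then observe $P_\setb\xi_\phi=\xi_\phi$. These are equivalent rearrangements of the same argument, and both rely on the same annihilation identity $Q_\setb S^jV_p=0$ for $p\in\setb$.
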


\begin{proof}
When $\setb$ is finite, the product is finite and belongs to $\pi_\phi(\TT(\nxnx))$. When $\setb$ is infinite, $Q_\setb$ is the weak-operator limit of a decreasing family of projections in the range of $\pi_\phi$, and  therefore belongs to $\pi_\phi(\TT(\nxnx)) ''$.

Suppose $p$ and $q$ are relatively prime.  Since for each $a$ the projections $s^j v_av_a^* s^{*j}$ with $0\leq j < a$ 
have mutually orthogonal ranges, we have
\begin{align*}
\phi\Big(\prod_j (1 - s^j v_pv_p^* s^{*j}) \, &\prod_k (1 - s^k v_qv_q^* s^{*k}) \Big) 
= \phi\Big(\Big(1 - \sum_j s^j v_pv_p^* s^{*j}\Big)\Big(1- \sum_k s^k v_qv_q^* s^{*k}\Big)\Big)\\
&= \phi\Big(1 - \sum_j s^j v_pv_p^* s^{*j}- \sum_k s^k v_qv_q^* s^{*k}+\sum_{j,k}s^j v_pv_p^* s^{*j}\;s^k v_qv_q^* s^{*k}\Big).
\end{align*}
The covariance relation in Lemma~\ref{covarianceonngenerators} implies that
\[
v_p^*s^{*j}s^kv_q=s^\alpha v_qv_p^*s^{*\beta},
\]
where $(\alpha,\beta)$ is the smallest non-negative solution of $k-j=\alpha p-\beta q$. So
\begin{align*}
\phi\Big(\prod_j (1 - s^j v_pv_p^* s^{*j}) \, &\prod_k (1 - s^k v_qv_q^* s^{*k}) \Big) \\
&= 1 - \sum_j \phi(s^j v_pv_p^* s^{*j}) - \sum_k \phi(s^k v_qv_q^* s^{*k})  + \sum_{j,k}
\phi( s^j v_pv_p^* s^{*j}\;s^kv_qv_q^* s^{*k} )\\
&= 1 - \sum_j p^{-\beta} - \sum_k  q^{-\beta}  + \sum_{j,k}
\phi\big( s^{j+p\alpha}v_pv_qv_q^*v_p^* s^{*(k+q\beta)}\big)\\
&= 1 - p(p^{-\beta}) - q(q^{-\beta})  + \sum_{j,k} (pq)^{-\beta}\\
&= (1-p^{1-\beta}) \, (1-q^{1-\beta})\\
&=\phi\Big(\prod_j (1 - s^j v_pv_p^* s^{*j})\Big)\phi\Big(\prod_k (1 - s^k v_qv_q^* s^{*k})\Big).
\end{align*}
where we have used formula \eqref{KMScharacterisation} in the third equality.
From this we deduce that for every finite subset $F$ of $\setb$, we have 
\[
\phi\Big(\prod_{p\in F} \prod_{j=0}^{p-1} (1-s^jv_{p} v_{p}^* s^{*j}) \Big)
= \prod_{p\in F} (1-p^{1-\beta}),
\]
and (1) follows on taking limits and using the product formula
\eqref{eulerproductB} for $\zeta_\setb$. 

Since $A \mapsto Q_\setb AQ_\setb$ is  positive and linear, $\phi_{Q_\setb }$ is a positive linear functional; part (1) implies that $\phi_{Q_\setb }(1) = 1$, and we have proved (2).

We next claim  that the projections 
\[
\{Q_{\setb ,k,a}:= S^k V_a Q_\setb  V_a^* S^{*k}: a\in \nx_E,\ 0\leq k<a\}
\]
are mutually orthogonal. Suppose $a,b \in \nx_\setb $, $0 \leq k <a$ and $0\leq l < b$ satisfy $(k,a)\not=(l, b)$. Then
\begin{equation}\label{prodorthog}
Q_{\setb ,k,a} Q_{\setb ,l,b} = S^k V_a Q_\setb  (V_a^* S^{*k} S^l V_b)  Q_\setb  V_b^* S^{*l},
\end{equation}  
and the covariance relation of Lemma~\ref{covarianceonngenerators} implies that the factor in parenthesis has the form $S^\gamma V_{b'} V_{a'}^* S^{*\delta}$, where $(\gamma, b')$ and $(\delta, a') $ cannot both be equal to $(0,1)$ because $(k,a)\not=(l, b)$. We can now use (T1) to extract from either $S^\gamma V_{b'}$ or $S^{\delta}V_{a'}$  a factor 
of the form $S^kV_p$ with $p \in \setb $ and $0\leq k<p$; since $Q_\setb  \leq 1 - S^k V_p V_p^* S^{*k}$, this implies that the right-hand side of \eqref{prodorthog} vanishes, and the claim is proved.

If $\zeta_\setb (\beta -1) < \infty$, then 
\[
\tilde\phi(\sum_{k,a} Q_{\setb ,k,a}) = \sum_{k,a} a^{-\beta} \tilde\phi(Q_\setb ) = \sum_a aa^{-\beta}\tilde\phi(Q_\setb )= \tilde\phi(Q_\setb ) \zeta_\setb (\beta -1) = 1,
\]
so that $\tilde \phi$ is carried by the projection $\sum_{k,a} Q_{\setb ,k,a}$. Thus we have
\[
\phi(T) 
=\tilde\phi\Big(\Big(\sum_{k,a}Q_{\setb ,k,a}\Big)\pi_\phi(T) \Big(\sum_{l,b}Q_{\setb ,l,b}\Big)\Big)
=\sum_{k,a, l,b} \tilde\phi\big(Q_{\setb ,k,a} \pi_\phi(T)  Q_{\setb ,l,b}\big).
\]
Now the orthogonality of the projections $Q_{E,k,a}$ and the KMS$_\beta$ condition imply that 
  \[
\phi(T)  = \sum_{k,l,a,b}a^{-\beta} \tilde\phi \big(Q_\setb  V_a^* S^{*k} \pi_\phi(T) S^l V_b Q_\setb V_b^*S^{*l}S^kV_aQ_\setb\big);
\]
as in \eqref{prodorthog}, we have $Q_\setb V_b^*S^{*l}S^kV_aQ_\setb=0$ unless $(k,a)=(l,b)$, and hence
  \[
\phi(T)  = \sum_{k=l, \, a=b}a^{-\beta} \tilde\phi \big(Q_\setb  V_a^* S^{*k} \pi_\phi(T) S^l V_b Q_\setb \big),
\]
which implies the reconstruction formula \eqref{reconstruct}. To get the formula \eqref{reconstructionequationSn} for $\phi(s^n)$, we deduce from the formulas in Lemma~\ref{relsata} that
\[
v_a^* s^{*m}s^ns^m v_a =
\begin{cases} v_a^*s^nv_a=s^{n/a}&\text{ if $a|n$,}\\ 
0&\text{ otherwise,}
\end{cases}
\]
so that for each $a|n$ there are $a$ equal summands on the right-hand side of \eqref{reconstruct}. This completes the proof of part (3).
\end{proof}

\begin{proposition}[\thmref{maintheorem}(2): uniqueness for $1\leq \beta \leq 2$]\label{unique1-2}
The state $\psi_\beta$ constructed in \proref{productmeasure} is the unique 
KMS$_\beta$ state for $1 \leq \beta \leq 2$.
\end{proposition}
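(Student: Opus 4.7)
The plan is to use \lemref{KMScharacterisationlemma} and the symmetry $\phi(s^{*n})=\overline{\phi(s^n)}$ to reduce the problem to showing that every KMS$_\beta$ state $\phi$ (for $\beta\in[1,2]$) satisfies $\phi(s^n)=0$ for every integer $n\geq 1$; combined with $\phi(1)=1$ this forces $\phi=\psi_\beta$, because the formula in part (2) of \thmref{maintheorem} gives $\psi_\beta(s^n)=0$ for $n\neq 0$.

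For $1<\beta\leq 2$ the argument proceeds directly from the reconstruction formula \eqref{reconstructionequationSn} of \lemref{phirestrictedtoQB}. Applied with $\setb=F$ ranging through finite subsets of $\primes$ (so that $\zeta_F(\beta-1)<\infty$), and using $|\phi_{Q_F}(s^{n/a})|\leq 1$, it yields
\[
|\phi(s^n)|\ \leq\ \frac{1}{\zeta_F(\beta-1)}\sum_{a\in\nx_F,\ a\mid n}a^{1-\beta}\ \leq\ \frac{\sum_{a\mid n} a^{1-\beta}}{\zeta_F(\beta-1)},
\]
whose numerator is a finite constant depending only on $n$ and $\beta$. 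The Euler product $\zeta_F(\beta-1)=\prod_{p\in F}(1-p^{1-\beta})^{-1}$ diverges as $F\nearrow\primes$, because $\sum_p p^{1-\beta}$ diverges whenever $\beta\leq 2$ (dominating $\sum_p 1/p$). Letting $F$ exhaust $\primes$ then forces $\phi(s^n)=0$.

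The main obstacle is the endpoint $\beta=1$, where the reconstruction formula breaks down: $\zeta_F(0)=\infty$ for every nonempty $F$, so \lemref{phirestrictedtoQB}(3) is simply unavailable. I plan to handle this case by an ad hoc Cauchy--Schwarz argument. Given $n\geq 1$, pick any prime $p>n$; then (T5) gives $v_p^* s^n v_p=0$ (since $1\leq n<p$), and a single application of the KMS$_1$ condition yields $\phi(s^{n+k}v_p v_p^* s^{*k})=p^{-1}\phi(v_p^* s^n v_p)=0$ for $0\leq k<p$. Writing $R_p:=1-\sum_{k=0}^{p-1}s^k v_p v_p^* s^{*k}$, which is a projection because the summands are mutually orthogonal, we then obtain $\phi(s^n)=\phi(s^n R_p)$. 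Cauchy--Schwarz, together with $R_p^2=R_p$ and $\phi(s^n s^{*n})=\phi(s^{*n}s^n)=1$ (the latter from $\sigma$-invariance of $s$ and $s^*s=1$), gives
\[
|\phi(s^n)|^2\ \leq\ \phi(s^n s^{*n})\,\phi(R_p)\ =\ 1\cdot(1-p\cdot p^{-1})\ =\ 0,
\]
so $\phi(s^n)=0$, completing the proof. Conceptually, the step at $\beta=1$ works because the projections $R_p$ acquire $\phi$-measure zero at the critical inverse temperature, which is equivalent to saying that $\phi$ factors through Cuntz's boundary quotient $\qn$ of \thmref{qnisboundaryquotient}; this is consistent with $\psi_1$ corresponding, via \proref{boundary}, to Haar measure on $\hatz\cong\partial\Omega$.
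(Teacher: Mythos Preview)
Your proof is correct. For $1<\beta\leq 2$ you follow exactly the paper's argument via the reconstruction formula \eqref{reconstructionequationSn} and the divergence of $\zeta_F(\beta-1)$.

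At $\beta=1$ your route genuinely differs from the paper's. The paper invokes \lemref{KMSfactors} to show that any KMS$_1$ state vanishes on the ideal generated by $1-ss^*$ and the $R_p$, hence factors through the boundary quotient $\qn$, and then appeals to Cuntz's uniqueness theorem \cite[Theorem~4.3]{cun2}. You instead argue directly: for a given $n\geq 1$ and a prime $p>n$, you use the KMS condition plus (T5) to get $\phi(s^n)=\phi(s^nR_p)$, then Cauchy--Schwarz together with $\phi(R_p)=0$ forces $\phi(s^n)=0$. This is a self-contained and more elementary proof that avoids the external citation; in effect you are exploiting the same vanishing $\phi(R_p)=0$ that underlies \lemref{KMSfactors} and \lemref{checkprojOK}, but applying it just where needed rather than passing to the quotient. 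The paper's approach has the advantage of making the structural connection to $\qn$ explicit, while yours keeps the argument internal to the present paper.
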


Before proving Proposition~\ref{unique1-2} we need to do some preliminary work. 

\begin{lemma}\label{checkprojOK}
Suppose $\beta\geq 1$ and $\phi$ is a KMS$_{\beta}$ state of $\TT(\nxnx)$. If $P$ is a projection in the span of $\{s^mv_av^*_bS^{*n}\}$ such that $\sigma_t(P)=P$ for all $t\in \R$ and $\phi(P)=0$, then $\phi(RPT)=0$ for all $R,T\in \TT(\nxnx)$.
\end{lemma}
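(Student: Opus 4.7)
The plan is to combine the Cauchy--Schwarz inequality for the positive functional $\phi$ with the KMS$_\beta$ condition, which lets us cycle an analytic element from one side of a product to the other.

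First I would establish the one-sided vanishing statements $\phi(XP)=0$ and $\phi(PX)=0$ for every $X\in\TT(\nxnx)$. Since $P=P^2=P^*$ we have $\phi(P^*P)=\phi(P)=0$, and Cauchy--Schwarz for the positive sesquilinear form $(A,B)\mapsto\phi(B^*A)$ yields
\[
|\phi(XP)|^2 \leq \phi(XX^*)\,\phi(P^*P) = 0;
\]
taking adjoints gives the companion identity $\phi(PX)=0$.

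Next I would argue that $P$ is itself an analytic element of $\TT(\nxnx)$ with $\sigma_z(P)=P$ for every $z\in\C$. The spanning monomials $s^m v_a v_b^* s^{*n}$ are analytic eigenvectors of $\sigma$ with eigenvalue $(a/b)^{it}$, so the only way a finite linear combination of them can be fixed by $\sigma_t$ for every real $t$ is for the nontrivial eigenspace contributions to cancel. Hence $P$ is a finite linear combination of monomials with $a=b$, each of which is genuinely fixed by $\sigma_z$ for every complex $z$. With $R$ analytic, the KMS$_\beta$ condition then gives
\[
\phi(R\cdot PT)=\phi\big(PT\cdot\sigma_{i\beta}(R)\big)=0,
\]
where the second equality is the first step applied to $Y:=T\sigma_{i\beta}(R)\in\TT(\nxnx)$. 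Since the analytic elements form a norm-dense subalgebra of $\TT(\nxnx)$ and $\phi$ is norm-continuous, the identity $\phi(RPT)=0$ extends to all $R\in\TT(\nxnx)$ by approximation.

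The only delicate point is confirming that $P$ is analytic and $\sigma_z$-invariant on all of $\C$ rather than merely on $\R$; once the eigenvector decomposition of the spanning set settles this, the remainder is a direct application of Cauchy--Schwarz together with the KMS condition.
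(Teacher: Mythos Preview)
Your proof is correct and follows the same overall strategy as the paper: use positivity of $\phi$ together with $\phi(P)=0$ to get a vanishing statement, establish $\sigma_z(P)=P$ for all complex $z$, apply the KMS identity to move an analytic factor to the other side, and then extend by density. Two small differences are worth noting. First, the paper proves the \emph{two-sided} vanishing $\phi(P\,\TT(\nxnx)\,P)=0$ via $0\le\phi(PT^*TP)\le\|T\|^2\phi(P)=0$, splits $RPT=(RP)(PT)$, and cycles the whole block $RP$ (using $\sigma_{i\beta}(RP)=\sigma_{i\beta}(R)P$) to land back in the corner; you instead deduce the \emph{one-sided} vanishing $\phi(PX)=0$ from Cauchy--Schwarz and cycle only $R$, which is marginally more direct and does not actually need $\sigma_{i\beta}(P)=P$ in the KMS step. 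Second, the paper obtains $\sigma_z(P)=P$ from the identity theorem applied to the entire function $z\mapsto P-\sigma_z(P)$, while you argue via the eigenspace decomposition of the spanning monomials; both routes are fine.

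One minor point to tighten: as written you apply the KMS identity with $T$ arbitrary, so $PT$ is not known to be analytic, whereas the KMS condition as stated in the paper requires both entries to be analytic. Either first take $T$ among the spanning monomials (so $PT$ is analytic, since you have shown $P$ is) and then extend by continuity in $T$ as well as in $R$, or remark that $\phi(da)=\phi(a\,\sigma_{i\beta}(d))$ extends by continuity to arbitrary $a$ once $d$ is analytic.
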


\begin{proof}
We first observe that for every  $T\in \TT(\nxnx)$ we have
\[
0\leq \phi(PT^*TP)\leq \phi(P\|T\|^2P)=\|T\|^2\phi(P)=0,
\]
and hence $\phi$ vanishes on the corner $P \TT(\nxnx) P$. Next, we consider analytic elements $R=s^mv_av_b^*s^{*n}$ and $T=s^qv_cv_d^*s^{*r}$. Since $z\mapsto P-\sigma_{z}(P)$ is analytic and vanishes on $\R$, it vanishes everywhere.  Thus the KMS$_{\beta}$ condition gives
\[
\phi(RPT)=\phi((RP)(PT))=\phi(PT\sigma_{i\beta}(RP))=(a/b)^{-\beta}\phi(PRTP)=0,
\]
and this extends to arbitrary $R$ and $T$ by continuity of $\phi$. 
\end{proof}

\begin{lemma}\label{KMSfactors}
Suppose that $\phi$ is a KMS$_{\beta}$ state of $\TT(\nxnx)$ for some $\beta\geq 1$. Then $\phi$ vanishes on the ideal in $\TT(\nxnx)$ generated by $1-ss^*$. If $\beta=1$, then $\phi$ also vanishes on the ideal generated by $\{1-\sum_{k=0}^{p-1} s^jv_pv_p^*s^{*j}:p\in \primes\}$.
\end{lemma}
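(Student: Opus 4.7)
The plan is to reduce both assertions to a single application of \lemref{checkprojOK}, which requires only that the generator of each ideal be a $\sigma$-invariant projection lying in the span of $\{s^mv_av_b^*s^{*n}\}$ on which $\phi$ vanishes. Once those hypotheses are verified, the conclusion is immediate because the two-sided ideal generated by a self-adjoint element $P$ is the closed linear span of $\{RPT:R,T\in\TT(\nxnx)\}$.

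For the first ideal, I would first observe that $1-ss^*$ is visibly a $\sigma$-invariant projection (since $s$ is an isometry and $\sigma_t(s)=s$) and that it lies in our spanning set (both $1=s^0v_1v_1^*s^{*0}$ and $ss^*=sv_1v_1^*s^*$ are of the given form). A single use of the KMS$_\beta$ condition gives $\phi(ss^*)=\phi(s^*\sigma_{i\beta}(s))=\phi(s^*s)=1$, so $\phi(1-ss^*)=0$, and \lemref{checkprojOK} delivers vanishing on the generated ideal.

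For the second ideal, I would show that each element $P_p:=1-\sum_{k=0}^{p-1}s^kv_pv_p^*s^{*k}$ is a $\sigma$-invariant projection: the summands are mutually orthogonal projections by (T5), and each is $\sigma$-fixed because $\sigma_t(v_p)=p^{it}v_p$ so that $\sigma_t(v_pv_p^*)=v_pv_p^*$. Repeating the KMS$_\beta$ computation already performed in the proof of \proref{noequilibriumatlowbeta} gives $\phi(s^kv_pv_p^*s^{*k})=p^{-\beta}$ for every $0\leq k<p$, whence $\phi(P_p)=1-p\cdot p^{-\beta}=1-p^{1-\beta}$. This vanishes precisely when $\beta=1$, and a second invocation of \lemref{checkprojOK} then finishes the argument.

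Both steps are essentially mechanical once \lemref{checkprojOK} is in hand, so there is no real obstacle; the only point worth flagging is that the value $\beta=1$ in the second assertion is exactly the threshold at which $\phi(P_p)=1-p^{1-\beta}$ collapses to $0$, which is why the stronger conclusion for the Cuntz relations only holds at the bottom of the critical strip.
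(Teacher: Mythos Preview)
Your proof is correct and follows essentially the same route as the paper: compute $\phi$ on each generating projection (getting $\phi(1-ss^*)=0$ and, when $\beta=1$, $\phi(P_p)=1-p^{1-\beta}=0$), and then invoke \lemref{checkprojOK} to conclude vanishing on the generated ideals. The paper quotes the characterisation formula \eqref{KMScharacterisation} rather than reproving the values via the KMS condition, but the argument is the same.
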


\begin{proof}
From \eqref{KMScharacterisation} we have $\phi(1-ss^*)=\phi(1)-\phi(ss^*)=1-1=0$, so the first assertion follows from Lemma~\ref{checkprojOK}. Now suppose $\beta=1$. Then another application of \eqref{KMScharacterisation} shows that
\[
\phi\Big(1-\sum_{k=0}^{p-1} s^jv_pv_p^*s^{*j}\Big)=1-\sum_{k=0}^{p-1}p^{-1}=1-p(p^{-1})=0,
\]
so the second assertion also follows from Lemma~\ref{checkprojOK}.
\end{proof}

\begin{proof}[Proof of Proposition~\ref{unique1-2}]
Let $\phi$ be a KMS$_\beta$ state, and 
suppose first that $1< \beta \leq 2$. For every finite set $E\subset\primes$ and every $n > 0$, the sum in \eqref{reconstructionequationSn}
has finitely many summands, each satisfying $a^{1-\beta}|\phi_{Q_\setb } (s^{n/a}) | \leq a^{1-\beta}$.  Thus, since $\zeta_\setb (\beta-1) \to \infty$ as $\setb $ increases,
the right-hand side of \eqref{reconstructionequationSn} tends to zero as $\setb $ increases through a listing of $\primes$. Thus $\phi(s^n) = 0$ for every $n\in \Z\setminus \{0\}$, and \lemref{KMScharacterisationlemma} implies that 
\[
\phi(s^nv_bv_a^*s^{*m})=\begin{cases}
0&\text{ unless $a=b$ and $m=n$}\\
a^{-\beta}&\text{ if $a=b$ and $m=n$.}
\end{cases}
\]
Comparing this with \eqref{charpsibeta} shows that $\phi = \psi_\beta$.

Now suppose $\beta = 1$.  Then Lemma~\ref{KMSfactors}  implies that $\phi$ 
 factors through the boundary quotient, and thus comes from a state of Cuntz's $\qn$. Thus the result follows from \cite[Theorem 4.3]{cun2}.
\end{proof}

For $\beta >2$ we can take $\setb  = \primes$ in Lemma~\ref{phirestrictedtoQB}, and deduce that a KMS$_\beta$ state is determined by its conditioning to  $Q:=Q_\primes$. We shall use this to prove that the map described in part (3) of \thmref{maintheorem} is surjective. 
Since KMS states vanish on the ideal generated by $1-ss^*$, the state $\phi$, the GNS-representation $\pi_\phi$, and the conditional state $\phi_{Q}$ all vanish on that ideal. In particular, this implies that the restriction of $\phi_{Q}$
 to $C^*(s)$ factors through the quotient map $q:C^*(s)\to C(\T)$, and hence there is a probability measure $\mu = \mu_\phi$ on $\T$ such that 
\begin{equation}\label{phiQ}
\phi_{Q}(s^n)=\int_{\T}q(s^n)\,d\mu=\int_{\T} \newz^n\,d\mu(\newz)\ \text{ for $n\in\Z$}.
\end{equation}

\begin{proposition}[\thmref{maintheorem}(3): the map $\mu \to \psi_{\beta,\mu}$
is a bijection] \label{KMSfromrestriction}
Let $\beta>2$ and take $Q:= Q_\primes$. If $\phi$ is a KMS$_\beta$ state and
$\mu_\phi$ is the probability measure on $\T$ such that \eqref{phiQ} holds, then $\phi = \psi_{\beta,\mu_\phi}$. Conversely, if $\mu$ is a probability measure on $\T$, then $\mu = \mu_{\psi_{\beta, \mu}}$.
\end{proposition}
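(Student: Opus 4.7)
The strategy is to leverage the reconstruction formula \eqref{reconstructionequationSn} from \lemref{phirestrictedtoQB}, which is available for $E = \primes$ because $\beta>2$ makes $\zeta(\beta-1)<\infty$, and to combine it with \lemref{KMScharacterisationlemma}, which says that a KMS$_\beta$ state is determined by its values on the powers $s^{((k))}$ for $k\in\Z$. Since both assertions concern the relationship between $\phi$ on $C^*(s)$ and $\phi_Q$ on $C^*(s)$, and since the remaining matrix coefficients of a KMS$_\beta$ state are obtained mechanically from these by Lemma~\ref{KMScharacterisationlemma}, the whole problem reduces to a computation on the abelian $C^*$-subalgebra generated by $s$.

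For the first assertion, let $\phi$ be a KMS$_\beta$ state. Substituting $\phi_{Q}(s^{n/a}) = \int_{\T} z^{n/a}\,d\mu_\phi(z)$ into \eqref{reconstructionequationSn} with $E = \primes$ gives, for every $n\geq 0$,
\[
\phi(s^n) \;=\; \frac{1}{\zeta(\beta-1)} \sum_{x\mid n} x^{1-\beta} \int_{\T} z^{n/x}\,d\mu_\phi(z),
\]
which is exactly $\psi_{\beta,\mu_\phi}(s^n)$ as read off from \eqref{KMScomputation} by taking $a=b=1$, $m=n$, and the right-hand exponent equal to zero. Taking adjoints extends the agreement to $s^{((k))}$ for $k<0$, and then \lemref{KMScharacterisationlemma} forces $\phi = \psi_{\beta,\mu_\phi}$.

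Conversely, given $\mu\in P(\T)$, applying the reconstruction formula to $\phi = \psi_{\beta,\mu}$ and comparing with the direct evaluation via \eqref{KMScomputation} yields, for every $n\geq 0$,
\[
\sum_{a\mid n} a^{1-\beta}\,(\psi_{\beta,\mu})_Q(s^{n/a}) \;=\; \sum_{x\mid n} x^{1-\beta} \int_{\T} z^{n/x}\,d\mu(z).
\]
A straightforward induction on $n$, isolating the $a=1$ term on the left against the $x=1$ term on the right and invoking the inductive hypothesis on the remaining summands, gives $(\psi_{\beta,\mu})_Q(s^n) = \int_{\T} z^n\,d\mu(z)$ for every $n\geq 0$; adjoints extend the identity to $n<0$. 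By uniqueness of the probability measure on $\T$ representing a state of $C(\T)$, we conclude $\mu_{\psi_{\beta,\mu}} = \mu$.

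The genuinely technical work has already been absorbed into \lemref{phirestrictedtoQB}, both the construction of the mutually orthogonal projections $Q_{\primes,k,a}$ and the resulting reconstruction formula. With those in hand, the remaining argument is essentially a M\"obius inversion on the divisor lattice, and the only delicate point is ensuring that the indexing conventions in \eqref{KMScomputation} and \eqref{reconstructionequationSn} are aligned correctly under the specialisation to pure powers of $s$; this alignment is what the case $a=b=1$, $m=n$, right-hand exponent $=0$ captures.
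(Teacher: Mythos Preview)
Your proof is correct and follows essentially the same route as the paper's: both use \lemref{KMScharacterisationlemma} to reduce to powers of $s$, then invoke the reconstruction formula \eqref{reconstructionequationSn} with $E=\primes$ to match $\phi(s^n)$ against the explicit formula \eqref{KMScomputation} for $\psi_{\beta,\mu_\phi}(s^n)$, and finally invert the divisor sum to recover the moments of $\mu$ from those of $\mu_{\psi_{\beta,\mu}}$. The only cosmetic difference is that the paper organises the inversion for the converse as an induction on the number of prime factors of $m$, whereas you run a direct induction on $n$; both are the same M\"obius-type argument, and your version is if anything slightly cleaner.
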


\begin{proof}
By \lemref{KMScharacterisationlemma}, to prove the first assertion
it suffices to check that $\phi$ and $ \psi_{\beta,\mu}$ agree on positive powers of $s$ (taking adjoints then shows that they also agree on powers of $s^*$). Since $\zeta(\beta -1) <\infty$, the reconstruction formula  
\eqref{reconstructionequationSn} gives
\begin{align*}
\phi(s^n)&=\frac{1}{\zeta(\beta-1)}\sum_{\{a\in \nx\;:\;a|n\}}a^{1-\beta}\phi_Q(s^{n/a})\\
&=\frac{1}{\zeta(\beta-1)}\sum_{\{a\in \nx\;:\;a|n\}} a^{1-\beta}\int_{\T} \newz^{n/a}\,d\mu(\newz),
\end{align*}
which by \eqref{KMScomputation} is precisely $\psi_{\beta,\mu}(s^n)$.

For the converse, we show that the moment sequences  
  $\int_\T \newz^m \, d\mu(\newz) $ and $\int_\T \newz^m \, d \mu_{\psi_{\beta, \mu}}(\newz)$
for the two measures coincide, and then an application of the Riesz representation theorem shows that $\mu=\mu_{\psi_{\beta, \mu}}$.  Since the measures are positive it suffices to deal with 
$m\geq 0$. We know that \eqref{KMScomputation} holds for both measures, for
$\mu$ by definition and for $ \mu_{\psi_{\beta, \mu}}$ by the first part.
When $m = 1$, we take $n = 0$  and  $x = y = 1$ in \eqref{KMScomputation}, which then reduces to a single term, and we can deduce that the first moments coincide:
 \[
 \int_\T \newz \, d\mu(\newz)  = \zeta(\beta-1) {\psi_{\beta, \mu}}(s) =\int_\T \newz \, d \mu_{\psi_{\beta, \mu}}(\newz).
 \]
 When  $m= p \in \primes$, equation \eqref{KMScomputation} gives
\begin{align}
  \int_\T \newz^p \, d\mu(\newz) + p^{1-\beta} \int_\T \newz \, d\mu(\newz)& = \zeta(\beta-1) {\psi_{\beta, \mu}}(s^p)\label{momentcalc}\\
  &=\int_\T \newz^p \, d \mu_{\psi_{\beta, \mu}}(\newz) + p^{1-\beta} \int_\T \newz \, d \mu_{\psi_{\beta, \mu}}(\newz).\notag
\end{align} 
Since we already know that $\int_\T \newz \, d\mu(\newz) = \int_\T \newz \, d \mu_{\psi_{\beta, \mu}}(\newz)$,
we conclude that 
\[
 \int_\T \newz^p \, d\mu(\newz)  = \int_\T \newz^p \, d \mu_{\psi_{\beta, \mu}}(\newz).
 \]
We can extend this result to non-prime $n\in \nx$ by an induction argument on the number of prime factors of $m$ (counting multiplicity); the key inductive step is established by an argument like that of \eqref{momentcalc}. Thus the moments are equal for all $n$, and the result follows.
 \end{proof}

This concludes the proof of  \thmref{maintheorem}.

\appendix
%%%%%%%%%%%%%%%%%%%%%%%%%%%%%%
 \section{Uniqueness of equilibrium for the  Bost-Connes algebra}\label{bcuniqueness}
 %%%%%%%%%%%%%%%%%%%%%%%%%%%%%%
 
The Hecke $C^*$-algebra $\bcheck$ of Bost and Connes \cite{bos-con} is the universal unital $C^*$-algebra generated by a unitary representation $e:\Q/\Z\to U(\bcheck)$ and an isometric representation $\mu:\nx\to \bcheck$ satisfying
\[
\frac{1}{n}\sum_{\{s\in\Q/\Z\,:\,ns=r\}} e(s)=\mu_ne(r)\mu_n^*
\]
(see \cite[Corollary~2.10]{bcalg}). The isometric representation $\mu$ is then automatically Nica covariant \cite[Proposition~2.8]{bcalg}, so there is a natural homomorphism $\pi_\mu:\TT(\nx)\to \bcheck$, and the main theorem of \cite{quasilat} implies that $\pi_\mu$ is injective. The unitary representation $e$ induces a unital homomorphism $\pi_e:C^*(\Q/\Z)\to \bcheck$. Since $\Q/\Z$ is an abelian group with dual isomorphic  to the additive group $\widehat\Z$ of integral ad\`eles, we can view $\pi_e$ as a homomorphism $\pi:C(\widehat\Z)\to\bcheck$. There is an action $\alpha$ of $\nx$ on $C(\widehat \Z)$ defined by
\[
\alpha_n(f)(z)=\begin{cases}f(n^{-1}z)&\text{if $n$ divides $z$ in $\widehat\Z$}\\
0&\text{otherwise,}\end{cases}
\]
and then the relations defining $\bcheck$ say that $(\pi,\mu)$ satisfies
\begin{equation}\label{semigpcov}
\pi(\alpha_n(f))=\mu_n\pi(f)\mu_n^*,
\end{equation}
and that $(\pi,\mu)$ is universal for such pairs (see, for example, \cite[Proposition~32]{diri}).  Next, note that the endomorphism $\gamma_n$ of $C(\hatz)$ defined by $\gamma_n(f)(z)=f(nz)$ satisfies $\alpha_n\circ\gamma_n(f)=\alpha_n(1)f$, so the embedding $\pi:C(\hatz)\to\bcheck$ satisfies
\begin{align}\label{revcov}
\mu_n^*\pi(f)\mu_n&=\mu_n^*\mu_n\mu_n^*\pi(f)\mu_n=\mu_n^*\pi(\alpha_n(1)f)\mu_n=\mu_n^*\pi(\alpha_n\circ\gamma_n(f))\mu_n\\
&=\mu_n^*\big(\mu_n\pi(\gamma_n(f))\mu_n^*\big)\mu_n=\pi(\gamma_n(f)).\notag
\end{align}

\begin{example}
In Theorem~\ref{qnisboundaryquotient} we identified $\qn$ as the boundary quotient $C(\partial\Omega)\rtimes(\qxqx)$. Proposition~\ref{boundary} shows that the homeomorphism of $\widehat\Z$ onto $\partial\Omega$ carries the action of $\nx\subset \widehat\Z$ by left multiplication (in the ring $\widehat\Z$) into the left action of $\nx\subset \Q_+^*\subset \qxqx$ on $\partial\Omega$. Thus if we use this homeomorphism to define a homomorphism $\pi:C(\widehat\Z)\to C(\partial\Omega)\rtimes(\qxqx)$, then the pair $(\pi,v|_{\nx})$ satisfies \eqref{semigpcov}, and hence gives a homomorphism $\pi\times v|_{\nx}$ of $\bcheck$ into $\qn$. Theorem~3.7 of \cite{bcalg} implies that $\pi\times v|_{\nx}$ is injective. (Cuntz gave a slightly different description of this embedding in \cite[Remark~3.5]{cun2}.)
\end{example}

The universal property of $\bcheck$ implies that there is an action $\sigma:\R\to \Aut\bcheck$ which fixes the subalgebra $C^*(\Q/\Z)\cong C(\widehat\Z)$ and satisfies $\sigma_t(\mu_n)=n^{it}\mu_n$. Our goal in this short appendix is to use the ideas of \secref{secsurjectivity} to give a relatively elementary proof of the following theorem, which is a key part of the Bost-Connes analysis of $\bcheck$. This approach bypasses the technical proofs 
of \cite[Lemmas~27(b) and~28]{bos-con} and of \cite[Lemma~45]{diri}.

\begin{theorem}\label{BCbeta<1}
For $\beta\in (0,1]$, the system $(\bcheck,\R,\sigma)$ has at most one KMS$_\beta$ state.
\end{theorem}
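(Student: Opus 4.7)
The strategy is to adapt the reconstruction-formula approach of \lemref{phirestrictedtoQB} and the uniqueness argument of \proref{unique1-2}, transferred from $\TT(\nxnx)$ to $\bcheck$. For each finite $E\subset\primes$ I would introduce the projection
\[
Q_E:=\prod_{p\in E}(1-\mu_p\mu_p^*)\in C^*(\Q/\Z)\subset\bcheck.
\]
Under the identification $C^*(\Q/\Z)\cong C(\hatz)$, the projection $\mu_p\mu_p^*$ corresponds to $1_{p\hatz}$, so $Q_E$ is identified with the indicator of $U_E:=\hatz\setminus\bigcup_{p\in E}p\hatz$. Since the KMS condition gives $\phi(\mu_n\mu_n^*)=n^{-\beta}$, inclusion--exclusion produces $\phi(Q_E)=\prod_{p\in E}(1-p^{-\beta})=\zeta_E(\beta)^{-1}$. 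A crucial point is that $\zeta_E(\beta)$ is finite for every finite $E$ and every $\beta>0$ (there is no analogue of the pole at $\beta=2$ encountered in the Toeplitz situation), so the conditional functional $\phi_{Q_E}(X):=\zeta_E(\beta)\phi(Q_EXQ_E)$ is a well-defined state throughout $\beta\in(0,1]$.

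Next I would verify that the projections $R_{E,a}:=\mu_aQ_E\mu_a^*=\alpha_a(Q_E)\in C(\hatz)$ for $a\in\nx_E$ are $\sigma$-invariant and pairwise orthogonal: for $a\neq b$ in $\nx_E$ some $p\in E$ has $v_p(a)\neq v_p(b)$, so the supports $aU_E$ and $bU_E$ in $\hatz$ are disjoint. Their total $\phi$-mass $\sum_{a\in\nx_E}\phi(R_{E,a})=\sum_{a\in\nx_E}a^{-\beta}/\zeta_E(\beta)=1$ then implies, by adapting \lemref{checkprojOK} to the $\sigma$-invariant projection $1-\sum_aR_{E,a}$, that all off-diagonal contributions $\phi(R_{E,a}TR_{E,b})$ ($a\neq b$) vanish. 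Applying the KMS condition to the diagonal terms $\phi(R_{E,a}TR_{E,a})$ yields the reconstruction formula
\[
\phi(T)=\sum_{a\in\nx_E}\frac{a^{-\beta}}{\zeta_E(\beta)}\phi_{Q_E}(\mu_a^*T\mu_a).
\]
A separate iteration of the KMS identity in the spirit of \lemref{KMScharacterisationlemma} shows that $\phi(\mu_mf\mu_n^*)=0$ for $m\neq n$ and $f\in C^*(\Q/\Z)$, so $\phi$ is entirely determined by the probability measure $\mu$ on $\hatz$ representing $\phi|_{C(\hatz)}$.

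Specialising the reconstruction to $T=e(r)$ and using $\mu_a^*e(r)\mu_a=e(ar)$ together with the identification $\phi_{Q_E}|_{C(\hatz)}=\zeta_E(\beta)\mu|_{U_E}$ gives the key identity
\[
\phi(e(r))=\sum_{a\in\nx_E}a^{-\beta}\int_{U_E}e^{2\pi i(ar)z}\,d\mu(z).
\]
For $r\neq 0$ of finite order $d$ and $E\supset\{p\mid d\}$, the summands with $a\in d\nx_E$ satisfy $ar=0$ and contribute exactly $d^{-\beta}$, leaving a remainder involving integrals of nontrivial characters over the shrinking set $U_E$. The main obstacle, and the essential departure from Neshveyev's proof, is to upgrade the naive bound $|\text{remainder}|\leq 1-d^{-\beta}$ to genuine uniqueness of $\phi(e(r))$; the ``ergodicity-free'' input combines the self-similarity $m_*\mu=m^{-\beta}\mu$ (itself a consequence of the KMS condition) with the equidistribution of primes modulo $d$ (a Dirichlet-type input that replaces the ergodicity of the $\nx$-action on $(\hatz,\mathrm{Haar})$) to force $\phi(e(r))$ to take a specific value as $E$ exhausts $\primes$, thereby determining $\mu$ uniquely.
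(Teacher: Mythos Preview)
Your setup is essentially the paper's: the projections $Q_E$, the computation $\phi(Q_E)=\zeta_E(\beta)^{-1}$, the orthogonality of the $\mu_aQ_E\mu_a^*$, and the reconstruction formula
\[
\phi(T)=\sum_{a\in\nx_E}\frac{a^{-\beta}}{\zeta_E(\beta)}\,\phi_{Q_E}(\mu_a^*T\mu_a)
\]
are exactly what the paper proves in \lemref{reconstructionlemma}. The reduction ``$\phi(\mu_m f\mu_n^*)=0$ for $m\neq n$, so $\phi$ is determined by $\phi|_{C(\hatz)}$'' is also correct (and follows just from $\sigma$-invariance). So up to that point you are on the paper's track.

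The gap is your last paragraph. You specialise to $T=e(r)$ with $r$ of order $d$, take $E\supset\{p\mid d\}$, and then wave at ``self-similarity $m_*\mu=m^{-\beta}\mu$'' and ``equidistribution of primes modulo $d$'' to dispose of the remainder. You yourself note that the naive bound gives only $|\text{remainder}|\le 1-d^{-\beta}$, which is useless, and you do not explain how the suggested inputs close this. (Also, $m_*\mu=m^{-\beta}\mu$ is not quite right: what the KMS condition gives is $\mu(mA)=m^{-\beta}\mu(A)$, i.e.\ the restriction of $\mu$ to $m\hatz$ pushed down by $m^{-1}$ equals $m^{-\beta}\mu$.) Working directly with $e(r)$ and taking $E$ to \emph{contain} the primes of $d$ is the wrong organisational choice: under $\mu_a^*(\cdot)\mu_a$ the element $e(r)$ becomes $e(ar)$, a \emph{different} element, so nothing factors out of the reconstruction sum.

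The paper's key manoeuvre, which you are missing, is to prove first that $\phi$ is $\hatz^*$-invariant (\lemref{KMSinv}), and to do so by working in the $\hatz^*$-eigenspaces $C(\hatz)_\chi$ rather than with the individual $e(r)$. For $f\in C_F\cap C(\hatz)_\chi$ with $\chi$ nontrivial and $F$ containing the primes in the conductor, one takes $E$ \emph{disjoint} from $F$. Then for $n\in\nx_E$ one has $\mu_n^*\pi(f)\mu_n=\pi(\theta_{u_n}(f))=\chi(u_n)\pi(f)$, the \emph{same} $f$ times a scalar, and the reconstruction formula collapses to
\[
\phi(\pi(f))=\phi_{Q_E}(\pi(f))\cdot\frac{1}{\zeta_E(\beta)}\sum_{n\in\nx_E}n^{-\beta}\chi(u_n).
\]
As $E\nearrow\primes\setminus F$ the Euler product $\sum_{n\in\nx_E}n^{-\beta}\chi(u_n)=\prod_{p\in E}(1-\chi(u_p)p^{-\beta})^{-1}$ converges (because $\chi$ is a nontrivial Dirichlet character), while $\zeta_E(\beta)\to\infty$ since $\beta\le 1$; hence $\phi(\pi(f))=0$. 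Once $\hatz^*$-invariance is established, $\phi$ factors through the expectation onto $\TT(\nx)$ and is then fixed by $\phi(\mu_n\mu_n^*)=n^{-\beta}$. This is the ``Dirichlet-type input'' you are gesturing at, but it requires the eigenspace decomposition and the opposite choice of $E$ to make the cancellation visible.
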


As in \cite{bos-con}, the idea is to prove that a KMS$_\beta$ state is invariant under the action of a large symmetry group as well as the dynamics. When we view $\bcheck$ as being generated by $C(\widehat\Z)$ and an isometric representation $\mu$ of $\nx$, the symmetry group is the multiplicative group $\widehat\Z^*$ of invertible elements in the ring $\widehat\Z$, which acts on $C(\widehat \Z)$ by $\tau_u(f)(z)=f(uz)$. The automorphisms $\tau_u$  commute with the endomorphisms $\alpha_n$, and hence give an action $\theta:\widehat\Z^*\to \Aut\bcheck$. This action commutes with the dynamics $\sigma$, and we know from Proposition~21 of \cite{bos-con} (or Propositions 30 and 32 of \cite{diri}) that the fixed-point algebra $\bcheck^\theta$ is the copy of the Toeplitz algebra $\TT(\nx)$ in $\bcheck$.

The key lemma is:

\begin{lemma}\label{KMSinv} 
Any KMS$_\beta$ state for $0< \beta \leq 1$ is $\hatz^*$-invariant, cf. \cite[Lemma 27(c)]{bos-con}.
\end{lemma}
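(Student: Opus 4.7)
The plan is to transport the reconstruction technique of \lemref{phirestrictedtoQB} to $\bcheck$. For each finite set $\setb\subset\primes$, I would first consider the projection $Q_\setb := \prod_{p\in\setb}(1-\mu_p\mu_p^*)\in\bcheck$. Since $\mu_p\mu_p^*=\pi(1_{p\hatz})$, this equals $\pi(1_{Y_\setb})$ where $Y_\setb := \hatz\setminus\bigcup_{p\in\setb}p\hatz$, and the crucial feature for this proof is that $\theta_u(Q_\setb)=Q_\setb$ for every $u\in\hatz^*$, because $\tau_u$ fixes the characteristic function of the $\hatz^*$-stable set $Y_\setb$. Computations analogous to those in the proof of \lemref{phirestrictedtoQB}(1) will show that $\{\mu_aQ_\setb\mu_a^* : a\in\nx_\setb\}$ is a mutually orthogonal family of projections whose $\phi$-values sum to $1$ for any KMS$_\beta$ state $\phi$ and any finite $\setb$, and the KMS condition will then supply the reconstruction
\[
\phi(T) = \sum_{a\in\nx_\setb}a^{-\beta}\,\phi(Q_\setb\mu_a^*T\mu_a Q_\setb),\qquad T\in\bcheck,\ \setb\ \text{finite}.
\]
A routine KMS calculation parallel to \lemref{KMScharacterisationlemma} will also show that $\phi$ vanishes on $\mu_a\pi(f)\mu_b^*$ when $a\neq b$ and satisfies $\phi(\mu_a\pi(f)\mu_a^*)=a^{-\beta}\nu(f)$ where $\nu := \phi|_{C(\hatz)}$, so that $\phi$ is determined by the probability measure $\nu$, and $\theta$-invariance of $\phi$ is equivalent to $\tau$-invariance of $\nu$.

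Specialising the reconstruction to $T=\pi(f)$ and using~\eqref{revcov}, one obtains the measure-theoretic reconstruction
\[
\nu(f) = \sum_{a\in\nx_\setb}a^{-\beta}\int_{Y_\setb}f(az)\,d\nu(z),
\]
and applying the same identity to the KMS$_\beta$ state $\phi\circ\theta_u$ yields
\[
(\tau_u^*\nu)(f) = \sum_{a\in\nx_\setb}a^{-\beta}\int_{Y_\setb}f(auz)\,d\nu(z) = \sum_{a\in\nx_\setb}a^{-\beta}\int_{Y_\setb}f(az)\,d(\tau_u^*\nu)(z),
\]
where I have used $\gamma_a\tau_u=\tau_u\gamma_a$ together with the change of variables $z\mapsto u^{-1}z$ and the fact that $u^{-1}Y_\setb=Y_\setb$. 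The problem is thus reduced to showing that $\nu$ and $\tau_u^*\nu$ coincide on all functions supported in $Y_\setb$, for every finite $\setb$.

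The main obstacle is precisely this last reduction: the two reconstructions are formally identical, and by themselves do not force the restrictions $\nu|_{Y_\setb}$ and $(\tau_u^*\nu)|_{Y_\setb}$ to coincide. To overcome this I would introduce the $\hatz^*$-averaged state $\bar\phi := \int_{\hatz^*}\phi\circ\theta_u\,d\lambda(u)$, where $\lambda$ is normalised Haar measure on $\hatz^*$; this is a KMS$_\beta$ state that is $\hatz^*$-invariant by construction, and the plan is to prove $\phi=\bar\phi$. Because $Q_\setb$ lies in $\bcheck^\theta$, the reconstruction formulas for $\phi$ and $\bar\phi$ both reduce the comparison to an equality of measures on each $Y_\setb$. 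Iterating the reconstruction through an exhaustive sequence $\setb_n\uparrow\primes$, and exploiting that for $\beta\le 1$ the total masses $\nu(Y_{\setb_n})=\bar\nu(Y_{\setb_n})=\prod_{p\in\setb_n}(1-p^{-\beta})$ both tend to zero, any residual discrepancy between $\nu$ and $\bar\nu$ must propagate through the recursion with $\sum_{a\in\nx_{\setb_n}}a^{-\beta}=\prod_{p\in\setb_n}(1-p^{-\beta})^{-1}\to\infty$, and controlling this divergence against the trivial bound $|\nu(f)-\bar\nu(f)|\leq 2\|f\|$ will force the discrepancy to be zero on cylinder functions, and hence on all of $C(\hatz)$ by density. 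The payoff over Neshveyev's approach is that the reconstruction lets us carry out the comparison entirely inside $\bcheck$, without having to establish ergodicity of the $\hatz^*$-action on any specific measure-preserving system.
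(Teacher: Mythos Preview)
Your setup is sound and matches the paper's: the reconstruction formula
\[
\phi(T)=\sum_{a\in\nx_\setb}a^{-\beta}\phi(Q_\setb\mu_a^*T\mu_a Q_\setb)
\]
is exactly Lemma~\ref{reconstructionlemma}, and reducing $\theta$-invariance of $\phi$ to $\tau$-invariance of $\nu=\phi|_{C(\hatz)}$ is correct. The gap is in your final step. Writing $\delta=\nu-\bar\nu$, your reconstruction gives
\[
\delta(f)=\sum_{a\in\nx_\setb}a^{-\beta}\int_{Y_\setb}f(az)\,d\delta(z),
\]
and you propose to bound the right-hand side using $|\delta|(Y_\setb)\le 2\zeta_\setb(\beta)^{-1}$ together with the divergence $\sum_{a\in\nx_\setb}a^{-\beta}=\zeta_\setb(\beta)\to\infty$. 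But these two effects cancel exactly: the bound you obtain is
\[
|\delta(f)|\le \zeta_\setb(\beta)\cdot\|f\|\cdot 2\zeta_\setb(\beta)^{-1}=2\|f\|,
\]
which is the trivial estimate you started with. No iteration of this inequality improves it, so the argument is circular.

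The paper breaks this cancellation with a genuinely different idea that you are missing: it works in the spectral subspaces $C(\hatz)_\chi$ for nontrivial characters $\chi\in(\hatz^*)^\wedge$. For $f\in C_F\cap C(\hatz)_\chi$ and $\setb\subset\primes\setminus F$ (where $F$ contains the primes dividing the conductor of $\chi$), one has $\mu_n^*\pi(f)\mu_n=\chi(u_n)\pi(f)$, and the reconstruction becomes
\[
\phi(\pi(f))=\phi_{Q_\setb}(\pi(f))\cdot\frac{1}{\zeta_\setb(\beta)}\sum_{n\in\nx_\setb}n^{-\beta}\chi(u_n).
\]
The Dirichlet $L$-series $\sum_n n^{-\beta}\chi(u_n)$ converges to a finite limit for every $\beta>0$ when $\chi$ is nontrivial, while $\zeta_\setb(\beta)\to\infty$ for $\beta\le 1$; hence $\phi(\pi(f))=0$. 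This number-theoretic input (boundedness of $L(\beta,\chi)$ versus divergence of $\zeta(\beta)$) is precisely what separates the trivial and nontrivial characters and makes the argument go through; your averaging approach does not access it.
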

 
Given this lemma, we know that any KMS$_\beta$ state $\psi$ factors through the expectation $E_\theta$ onto the fixed point algebra $\bcheck^\theta=\TT(\nx)$, and hence is determined by its values on $\TT(\nx)$. Since $\nx$ is quasi-lattice ordered (in fact it is lattice ordered), we have $\TT(\nx)=\clsp\{\mu_m\mu_n^*\}$. Since the KMS state $\psi$ is invariant for the dynamics $\sigma$, and since $\sigma_t(\mu_m\mu_n^*)=(m/n)^{it}\mu_m\mu_n^*$, we must have $\psi(\mu_m\mu_n^*)=0$ for $m\not= n$, and $\psi$ is determined by its values on $\lsp\{\mu_n\mu_n^*\}$. But there it is completely determined by the KMS condition:
\[
\psi(\mu_n\mu_n^*)=\psi(\mu_n^*\sigma_{i\beta}(\mu_n))=n^{-\beta}\psi(\mu_n^*\mu_n)=n^{-\beta}\psi(1)=n^{-\beta}.
\]
So there can only be one such state, and Lemma~\ref{KMSinv} implies Theorem~\ref{BCbeta<1}.

It remains for us to prove \lemref{KMSinv}. The key ingredient is an analogue of
\lemref{phirestrictedtoQB} for the Bost-Connes system, which stems from the observation that the reconstruction formula from \cite[Theorem 20]{diri} 
also works for small $\beta$ if one restricts to finitely many primes, as Neshveyev did in the proof of the Proposition in \cite{nes}.

Suppose that  $\phi$ is a KMS$_\beta$ state of $(\cq, \sigma)$. For $E\subset \primes$ finite, we set $Q_\setb  := \prod_{p\in \setb } (1- \mu_p \mu_p^*)$. For distinct primes $p,q$ we have
 \[
  \phi(\mu_p  \mu_p^* \mu_q \mu_q^*)  = \phi(\mu_{pq}\mu_{pq}^*) = (pq)^{-\beta} = 
 p^{-\beta} q^{-\beta} = \phi(\mu_p \mu_p^*) \phi( \mu_q \mu_q^*) ,
 \]
and thus
  \[
  \phi(Q_\setb ) = \prod_{p\in \setb } (1 - \phi(\mu_p\mu_p^*)) = \prod_{p\in \setb } (1 - p^{-\beta}) = \zeta_\setb (\beta)\inv,
  \]
as defined in \eqref{eulerproductB}.  We define the  conditional state $\phi_{Q_\setb }$ ($\phi$ given $Q_\setb $) by
 \[
 \phi_{Q_\setb }(\cdot) := \zeta_\setb (\beta) \phi(Q_\setb  \, \cdot \, Q_\setb ).
 \]

 \begin{lemma}\label{reconstructionlemma}
 If $\phi$ is a KMS$_\beta$ state of $(\cq,\sigma)$ and $E$ is a finite subset of $\primes$, then
 \[
 \phi(T) = \sum_{n\in \nx_\setb } \frac{n^{-\beta}}{\zeta_\setb (\beta)} \phi_{Q_\setb } (\mu_n^* T  \mu_n)\ \text{ for $T\in \cq$.}
 \]
 \end{lemma}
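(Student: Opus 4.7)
The plan is to adapt the argument from Lemma~9.1(3) of the main text, using the $\sigma$-invariant projections $P_n := \mu_n Q_\setb  \mu_n^*$ (for $n \in \nx_\setb $) in place of the $Q_{\setb ,k,a}$. First I would check that $\{P_n : n\in \nx_\setb \}$ is a family of mutually orthogonal projections whose $\phi$-masses sum to $1$; then the KMS condition will reduce $\phi(T)$ to a sum of diagonal terms $\phi(P_n T P_n)$, which will telescope via one more application of KMS to the right-hand side of the desired formula.

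For the orthogonality, given $m \neq n$ in $\nx_\setb $, let $d := \gcd(m,n)$, $m' := m/d$ and $n' := n/d$. Nica covariance in $\nx$ gives $\mu_m^* \mu_n = \mu_{n'} \mu_{m'}^*$, so
\[
P_m P_n = \mu_m Q_\setb  \mu_{n'} \mu_{m'}^* Q_\setb  \mu_n^*.
\]
The key algebraic input is that $Q_\setb  \mu_k = 0$ whenever $k > 1$ and every prime factor of $k$ lies in $\setb $, because some $p\in \setb $ divides $k$ and the corresponding factor $(1-\mu_p\mu_p^*)$ in $Q_\setb $ annihilates $\mu_k = \mu_p \mu_{k/p}$. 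Since $m \neq n$ in $\nx_\setb $ forces $m' > 1$ or $n' > 1$, we conclude $P_m P_n = 0$. For the masses, since each $P_n$ is $\sigma$-invariant, the KMS condition combined with $\mu_n^* \mu_n = 1$ gives $\phi(P_n) = n^{-\beta}\phi(Q_\setb ) = n^{-\beta}/\zeta_\setb (\beta)$, and summing over $\nx_\setb $ yields $1$ thanks to the Euler product~\eqref{eulerproductB}.

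Consequently, in the GNS representation of $\phi$ the finite sums of the $\pi_\phi(P_n)$ converge strongly to the support projection of $\xi_\phi$, and $\phi(T) = \sum_{m,n \in \nx_\setb } \phi(P_m T P_n)$. To kill the off-diagonal terms, I would apply the KMS condition with the analytic element $P_m$: for any $T\in \cq$ and $m\neq n$,
\[
\phi(P_m T P_n) = \phi\bigl((TP_n)\sigma_{i\beta}(P_m)\bigr) = \phi(T P_n P_m) = 0.
\]
For the diagonal terms, I would cycle $\mu_n$ past the product using KMS and simplify with $\mu_n^*\mu_n = 1$:
\[
\phi(P_n T P_n) = \phi\bigl(Q_\setb  \mu_n^* T \mu_n Q_\setb  \mu_n^* \sigma_{i\beta}(\mu_n)\bigr) = n^{-\beta}\phi(Q_\setb  \mu_n^* T \mu_n Q_\setb ) = \frac{n^{-\beta}}{\zeta_\setb (\beta)} \phi_{Q_\setb }(\mu_n^* T \mu_n),
\]
and summing over $n \in \nx_\setb $ delivers the reconstruction formula. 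The main technical hurdle is the bookkeeping inherent in the KMS cycling; the two algebraic inputs (the annihilation identity $Q_\setb \mu_k=0$ for $k>1$ with $k\in\nx_\setb $, and the Euler product evaluation of $\phi(Q_\setb )$) do all the real work, and finiteness of $\setb $ is used only to ensure $Q_\setb  \in \cq$ rather than a weak-operator limit in $\cq''$.
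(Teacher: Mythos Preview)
Your proof is correct and follows essentially the same line as the paper's: establish mutual orthogonality of the projections $P_n=\mu_nQ_\setb\mu_n^*$, show their $\phi$-masses sum to $1$ so that $\phi$ is supported on $\sum_n P_n$ in the GNS picture, then use two applications of the KMS condition to kill off-diagonal terms and simplify the diagonal ones. The only noticeable difference is cosmetic: for orthogonality the paper singles out a prime $q\in\setb$ with $e_q(m)\neq e_q(n)$ and pushes the excess $\mu_q$-factor against $Q_\setb$, whereas you invoke Nica covariance $\mu_m^*\mu_n=\mu_{n'}\mu_{m'}^*$ directly and use the annihilation identity $Q_\setb\mu_k=0$ for $k>1$ in $\nx_\setb$; both arguments are equally short.
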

 \begin{proof}
We first claim that the projections $\mu_n Q_\setb  \mu_n^*$ for  $n\in \nx_\setb $ are mutually orthogonal. To see this, suppose $m,n\in \nx_\setb$ and $m\not=n$. Then there exists $q\in E$ such that $e_q(m)\not= e_q(n)$; say we have $e_q(m)<e_q(n)$, and write $m=m'q^{e_q(m)}$, $n=n'q^{e_q(n)}$. Then $\gcd(q,m')=1$, so  $\mu_{m'}^*\mu_q= \mu_q\mu_{m'}^*$ and 
\begin{align*} 
( \mu_m Q_\setb  \mu_m^*)  ( \mu_n Q_\setb  \mu_n^*)
&= \mu_m Q_\setb  \mu_{m'}^* \mu_q^{e_q(n)-e_q(m)}\mu_{n'} Q_\setb  \mu_n^*\\
&=\mu_m Q_\setb \mu_q^{e_q(n)-e_q(m)} \mu_{m'}^*\mu_{n'} Q_\setb  \mu_n^* 
\end{align*}
vanishes because the factor $(1-\mu_q\mu_q^*)\mu_q$ of $Q_\setb \mu_q^{e_q(n)-e_q(m)}$ does.

The normal extension $\tilde\phi$ of $\phi$ to $\pi_\phi(\cq)"$ satisfies 
\[
\tilde\phi \Big(\sum_{n \in \nx_\setb } \pi_\phi\big(\mu_n Q_\setb  \mu_n^*\big)\Big) = \sum_{n \in \nx_\setb } \phi(\mu_n Q_\setb  \mu_n^*)
= \sum_{n \in \nx_\setb } n^{-\beta} \phi(Q_\setb ) = 1,
\]
so the state $\tilde\phi$ is supported by the  projection $\sum_{m \in \nx_\setb } \mu_m Q_\setb  \mu_m^*$, and
 \begin{align*}
 \phi(T) &=\tilde\phi(\pi_\phi(T))\\
 &=\tilde  \phi\bigg(\Big(\sum_{m \in \nx_\setb }\pi_\phi\big( \mu_m Q_\setb  \mu_m^*\big)\Big) \pi_\phi(T)\Big(\sum_{n \in \nx_\setb } \pi_\phi\big(\mu_n Q_\setb  \mu_n^*\big)\Big)\bigg)\\
&= \sum_{m,n \in \nx_\setb } \phi(\mu_m Q_\setb  \mu_m^* T \mu_n Q_\setb  \mu_n^*).
 \end{align*}
The KMS condition \eqref{defKMS} (with $c  = \mu_m Q_\setb  \mu_m^*$) and the orthogonality of the $\{\mu_n Q_\setb  \mu_n^*\}$ imply that the terms with $m \not=n$ vanish, and another application of the KMS condition gives   
\[
 \phi(T) =\sum_{n \in \nx_\setb } n^{-\beta} \phi( Q_\setb  \mu_n^* T \mu_n Q_\setb  \mu_n^*n^{-\beta}\mu_n) = \sum_{n \in \nx_\setb } n^{-\beta} \phi( Q_\setb  \mu_n^* T \mu_n Q_\setb  ).\qedhere
 \]
\end{proof}

\begin{proof}[Proof of \lemref{KMSinv}]
As indicated at the beginning of \cite[Section 7]{bos-con}, to prove that a state is $\hatz^*$-invariant it suffices to show that
it vanishes on the spectral subspaces 
\[
C(\hatz)_\chi : = \{f \in C(\hatz) : \theta_g ( f ) = \chi(g) f \text{ for all } g \in \hatz^*\}
\]
for nontrivial characters  $\chi $ of $\hatz^*$. So suppose that\footnote{Sorry about the hats. The one in $\hatz^*$ is the hat in $\hatz$, which is standard number-theoretic notation for the integral ad\`eles, and the outside one in $(\hatz^*)^\wedge$ is the standard harmonic-analytic notation for the Pontryagin dual.} $\chi\in(\hatz^*)^\wedge$ and $\chi\not=1$. Since $\hatz^*$ is the inverse limit
$\varprojlim(\Z/n\Z)^*$, the dual $(\hatz^*)^\wedge$ is the direct limit $\varinjlim((\Z/n\Z)^*)^\wedge$, and there exists $m$ such that $\chi$ belongs to $((\Z/m\Z)^*)^\wedge$ --- in other words, such that $\chi$ factors through the canonical map $r\mapsto r(m)$ from $\hatz^*$ to $(Z/m\Z)^*$. Let $F$ be a finite set of primes containing all the prime factors of $m$, so that $m\in\nx_F$. When we identify $\hatz$ with $\prod_{p\in\primes}\Z_p$, the subalgebras $C_F:=C(\prod_{p\in F}\Z_p)\otimes 1$ span a dense subspace of $C(\hatz)$; since $E_\chi:f\mapsto \int_{\hatz^*} \theta_u(f)\overline{\chi(u)}\,du$ onto $C(\hatz)_\chi$ is continuous, the union $\bigcup_{F}(C_F\cap C(\hatz)_\chi)$ is dense in $C(\hatz)_\chi$. Thus it suffices to prove that $\phi(\pi(f))=0$ for every $f\in C_F$ (where $\pi$ is the embedding of $C(\hatz)$ in $\bcheck$ discussed at the beginning of the section). 

As in the proof of \cite[Lemma 27]{bos-con}, see also \cite[pages~369--370]{diri}, we modify the embedding of $\nx$ in $\hatz$ so that every positive integer lands in $\hatz^*$: for $q\in \primes$ we take $u_q$ to be the element of $\prod_p \Z_p^*$ defined by 
\[
(u_q)_p = 
\begin{cases}q & \text{ if  $p \neq q$} \\
                  1 & \text{ if } p = q,
\end{cases}
\]
and extend the map $q\mapsto u_q$ to $\nx$   by prime factorisation. Notice that if $p\notdiv n$, then $(u_n)_p=n$ in $\Z_p$, so for functions $f\in C_F$ and $n\in \nx_{\primes\setminus F}$, we have $\theta_{u_n}(f)=\gamma_n(f)$ (where $\gamma_n$ is the left inverse for $\alpha_n$ discussed at the start of the section), and hence \eqref{revcov} implies that $\mu_n^*\pi(f)\mu_n=\pi(\theta_{u_n}(f))$.

Now suppose that $F$ is a fixed finite set of primes containing the prime factors of $m$, and take $f\in C_F\cap C(\hatz)_\chi$. Then for each finite subset $E$ of $\primes\setminus F$,  \lemref{reconstructionlemma} implies that 
\begin{align}\label{calcuserecontrs}
\phi(\pi(f))&= \sum_{n\in \nx_\setb } \frac{n^{-\beta}}{\zeta_\setb (\beta)} \phi_{Q_\setb } (\mu^*_n\pi(f)\mu_n)\\ 
&=\sum_{n\in \nx_\setb } \frac{n^{-\beta}}{\zeta_\setb (\beta)} \phi_{Q_\setb } (\pi(\theta_{u_n}(f)))\notag\\
&= \phi_{Q_\setb } (\pi(f)) \sum_{n\in \nx_\setb } \frac{n^{-\beta}}{\zeta_\setb (\beta)} \chi(u_n).\notag
\end{align}
 Since $n \mapsto \chi(u_n)$ is a nontrivial Dirichlet character modulo $m$, we have 
\[
\sum_{n\in \nx_\setb } n^{-\beta} \chi(u_n) = \prod_{p\in \setb } (1 - p^{-\beta} \chi(u_p))\ \text{ for $\beta>0$;}
\]
as $\setb $ increases through a listing of $\primes \setminus F$, this product converges to 
 $\prod_{p\in \primes \setminus F} (1 - p^{-\beta} \chi(u_p))$, which is finite (by, for example, Theorem 5 on page 161 of \cite{lan}). On the other hand, since $\beta \leq 1$, we have
$\zeta_\setb (\beta) \to \zeta_{\primes \setminus F} (\beta) = \infty$ as $E$ increases. Thus \eqref{calcuserecontrs} implies that $\phi(\pi(f)) = 0$.
\end{proof}

\end{document}